\newtheorem{thm}{Theorem}[section]
\newtheorem{lem}[thm]{Lemma}
\newtheorem{rem}[thm]{Remark}
\newtheorem{prop}[thm]{Proposition}
\newtheorem{cla}[thm]{Claim}
\newtheorem{prob}[thm]{Problem}
\numberwithin{equation}{section}
\date{\today}
\newcommand{\Rmnum}[1]{\expandafter\@slowromancap\romannumeral #1@}
\def\D{\mathbb{D}}
\def\M{{\mathcal M}}
\def\O{{\mathcal O}}
\begin{document}

\title{Essentially commuting dual truncated Toeplitz operators}

\author[Chongchao Wang]{Chongchao Wang\textsuperscript{1}}
\address{\textsuperscript{1} College of Mathematics and Statistics, Chongqing University, Chongqing, 401331, P. R. China}
\email{chongchaowang@cqu.edu.cn}
\author[Xianfeng Zhao]{Xianfeng Zhao\textsuperscript{2}}
\address{\textsuperscript{2} College of Mathematics and Statistics, Chongqing University, Chongqing, 401331, P. R. China}
\email{xianfengzhao@cqu.edu.cn}
\author[Dechao Zheng]{Dechao Zheng\textsuperscript{3}}
\address{\textsuperscript{3} Center of Mathematics, Chongqing University, Chongqing, 401331, P. R. China and  Department of Mathematics, Vanderbilt University, Nashville, TN 37240, United States}
\email{dechao.zheng@vanderbilt.edu}

\keywords{Hardy space, dual truncated Toeplitz operator, essentially commuting}

\subjclass[2010]{47B32, 47B35}


\begin{abstract}
In this paper, we completely characterize when two dual truncated  Toeplitz operators are essentially commuting and when the semicommutator  of two dual truncated Toeplitz operators is compact. Our main idea  is to study dual truncated Toeplitz operators via  Hankel operators,  Toeplitz operators  and function algebras.
\end{abstract}

\maketitle

\section{introduction}
Let $\D$ be the open unit disk and $\partial \mathbb D$ be its boundary. Let $L^2$ denote the Lebesgue space of square integrable functions on the unit circle $\partial \mathbb D$. The Hardy space $H^2$ is the closed subspace of $L^2$, which is spanned by the space of analytic polynomials. Thus there is an orthogonal projection $P$ from $L^2$ onto $H^2$. For $\varphi$ in $L^\infty$, the space of essentially bounded  measurable functions on  $\partial \mathbb D$, the Toeplitz operator $T_\varphi$ and the Hankel operator $H_\varphi$ with symbol $\varphi$ on $H^2$ are defined by
$$T_\varphi f=P(\varphi f)$$
and
$$H_\varphi f=(I-P)(\varphi f)$$
for $f\in H^2$, respectively. Moreover, the dual Toeplitz operator $S_\varphi$ on $(H^2)^{\bot}$  is defined by
$$S_\varphi h= (I-P)(\varphi h),  \ \ \ \  h\in (H^2)^{\bot}.$$
For more information on the topics of Toeplitz and Hankel operators we refer to \cite{Dou, Zhu}.

Let $T_z^*$ be the adjoint of the forward shift operator $T_z$. Suppose that $u$ is a nonconstant inner function. The invariant subspace for $T_z^*$ $$K_{u}^2=H^2\ominus uH^2$$ is called  the model space \cite{G}. Let $P_{u}$ be the orthogonal projection from $L^2$ onto $K_{u}^2$. For $\varphi\in L^2$,  the dual truncated Toeplitz operator $D_\varphi$ with symbol $\varphi$ on the orthogonal complement of $K_u^2$ is densely defined by
$$D_{\varphi}f=(I-P_u)(\varphi f)$$ on the subspace $(K_{u}^2)^\perp \cap L^\infty$ of  $(K_{u}^2)^\perp=L^2\ominus K_{u}^2$. Noting that $L^2=H^2\oplus \overline{zH^2}$ and $K_{u}^2=H^2\ominus uH^2$, we obtain $$(K_{u}^2)^\perp=uH^2\oplus \overline{zH^2},$$ and moreover,
$$P_{u}=P-M_{u}PM_{\overline{u}}$$
and $$ I-P_{u}=M_{u}PM_{\overline{u}}+(I-P),$$
where $M_u$ is the multiplication operator on $H^2$ with symbol $u$.

Toeplitz operators and Hankel operators have played an especially  important role in function theory and operator theory. There are
many fascinating problems about those two classes of operators.  The essentially commuting problem of two bounded linear operators  arises from studying Fredholm theory of operators on a Hilbert space. The answer to the commuting problem for two Topelitz operators on the Hardy space was obtained by Brown and Halmos \cite{BrHa} in 1964, which states that two Toeplitz operators are commuting if and only if either both symbols of these operators are analytic, or both symbols of these operators are co-analytic, or a nontrivial linear combination of their symbols
 is  constant. Axler and \v{C}u\v{c}kvoi\'{c} obtained the analogous result for Toeplitz
operators with bounded harmonic symbols on the Bergman space of the unit disk \cite{AC}. Using some techniques in multiple complex-variable functions, Ding, Sun and Zheng \cite{DZ} established a necessary and sufficient condition for two Toeplitz operators  to be commuting on the Hardy space over the bidisk.

 The problem  of when the commutator or semicommutator of two operators  is compact on function spaces  has been investigated by many people.   The beautiful Axler-Chang-Sarason-Volberg theorem (\cite{ACS}, \cite{V}) states that   the semicommutator $ T_{f}T_{g}-T_{fg}$  of two
Hardy Toeplitz operators $T_f$ and $T_g$ is compact if and only if  either $\overline{f}$ or $g$ is in $H^\infty$ on each \emph{support set} (which will be introduced in the next section). An elementary characterization for the compactness of the semicommutator of two Hardy Toeplitz operators in terms of Hankel operators
was obtained by Zheng \cite{Zheng}. The compactness for the semicommutator of two Toeplitz operators on other analytic function spaces has been studied in \cite{Gu}, \cite{GSZ} and \cite{Zheng1}.

 In 1999, Gorkin and Zheng \cite{GpZ}   completely characterized the compact commutator $T_fT_g-T_gT_f$ of two Toeplitz operators on the Hardy space in terms of Douglas algebras or support sets.   More precisely, the characterization in \cite{GpZ} can be stated as follows: two Toeplitz operators are essentially commuting if and only if either the restrictions of their symbols on each support set $S$ are in $H^\infty|_{S}$, or the restrictions of the conjugations of their symbols on each $S$ belong to $H^\infty|_{S}$, or a nontrivial linear combination of the restrictions of their symbols on  each support set $S$ is constant. The essentially commuting problem for Toeplitz operators with bounded harmonic symbols on the Bergman space was solved by Stroethoff \cite{Str} in 1993.

Dual Toeplitz operators on the orthogonal complement of the Bergman space were studied in \cite{StZ}.
Dual truncated Toeplitz operator is a new class of operators on the  orthogonal complement of the  model space, which was first introduced in \cite{DS}. In \cite{Ca}, asymmetric dual truncated Toeplitz operators acting between the
orthogonal complements of two (eventually different) model spaces were
introduced.  Although these operators differ in many ways from Toeplitz operators on the Hardy space, they do have some of the same interesting properties, see \cite{DS} and \cite{DSQ} for more information. In the present paper, we focus on the following  problems:

\begin{prob}
When is the commutator $[D_f, D_g]=D_fD_g-D_gD_f$ of two dual truncated Toeplitz operators $D_f$ and $D_g$ with $f$ and $g$ in $L^\infty$
compact?
\end{prob}

\begin{prob}
When is   the semicommutator $[D_f,D_g)=D_fD_g-D_{fg}$ of two dual truncated Toeplitz operators $D_f$ and $D_g$ with $f$ and $g$ in $L^\infty$
compact?
\end{prob}

In order to study  the dual truncated Toeplitz operators, we use the useful matrix representation for the
 dual truncated Toeplitz operator to establish a connection between the Toeplitz operator,  Hankel operator and dual  truncated   Toeplitz operator. Then the above essentially commuting (semicommuting) problem can be reduced to the study of the compactness of products of Toeplitz, Hankel and dual Toeplitz operators.
 The  difficult part in this paper is characterizing the compactness of the sum of the four products of Toeplitz, Hankel and dual Toeplitz operators. Our main idea here is to study dual truncated Toeplitz operators via the characterization for the essentially commuting Hankel and Toeplitz operators \cite{GkZ1} and function algebras. The first main result in this paper is the following theorem.

\begin{thm}\label{MR} Let $u$ be a nonconstant inner function and $f,g\in L^\infty$. The commutator $[D_f,D_g]$ is compact if and only if for each support set $S$, one of the following holds: \\
  $(1)$ $f|_S$, $g|_S$, $\left((u-\lambda)\overline{f}\right)|_{S}$ and $\left((u-\lambda)\overline{g}\right)|_{S}$ are in $H^\infty|_{S}$ for some constant $\lambda$; \\
  $(2)$ $\overline{f}|_S$, $\overline{g}|_S$, $\left((u-\lambda)f\right)|_{S}$ and $\left((u-\lambda)g\right)|_{S}$ are in $H^\infty|_{S}$ for some constant  $\lambda$; \\
  $(3)$ there exist constants $a$, $b$, not both zero, such that $(af+bg)|_{S}$ is a constant.
\end{thm}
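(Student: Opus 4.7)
The plan is to reduce the essentially commuting problem for $D_f$ and $D_g$ to a compactness problem for a sum of products of classical operators on $H^2$ and $(H^2)^\perp$, and then to invoke existing characterizations for essentially commuting and semicommuting Toeplitz and Hankel operators on the Hardy space.

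First, I would exploit the decomposition $(K_u^2)^\perp = uH^2 \oplus \overline{zH^2}$ together with $I - P_u = M_u P M_{\bar u} + (I-P)$ to write each $D_\varphi$ as a $2\times 2$ block operator. A direct calculation shows that, under the natural identification of $uH^2$ with $H^2$ (via multiplication by $\bar u$), the diagonal blocks of $D_\varphi$ become $T_\varphi$ and $S_\varphi$, while the off-diagonal blocks are (twisted) Hankel operators whose symbols involve $u\varphi$ or $u\bar\varphi$. Computing $D_fD_g - D_gD_f$ block by block then expresses each block as a finite sum of operators of the form $[T_f, T_g]$, $[S_f, S_g]$, Hankel products such as $H_{u\bar f}^*H_{ug} - H_{u\bar g}^*H_{uf}$, and mixed Toeplitz/Hankel products involving the inner function $u$. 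This $u$-twisting is the mechanism by which the factor $(u-\lambda)$ will enter the final statement.

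For the sufficiency direction, I would verify that each of the three cases makes every block compact. Case $(3)$ is immediate: if $af+bg$ is constant on $S$, then $aD_f + bD_g$ lies, modulo compacts, in a commutative family containing $D_f$ and $D_g$. For cases $(1)$ and $(2)$, the hypothesis that both $f, g$ (respectively, both $\bar f, \bar g$) are in $H^\infty|_S$ gives compactness of the diagonal blocks by the Axler--Chang--Sarason--Volberg theorem, and the extra hypothesis $(u-\lambda)\bar f|_S, (u-\lambda)\bar g|_S \in H^\infty|_S$ is precisely what is needed to compactify the $u$-twisted off-diagonal blocks. For necessity, assuming $[D_f, D_g]$ is compact, I would extract from each block a support-set-wise constraint: compactness of the $(1,1)$ block combined with the Axler--Chang--Sarason--Volberg theorem and the Gorkin--Zheng theorem imposes the trichotomy on $T_f, T_g$; the $(2,2)$ block gives the parallel information for $S_f, S_g$; and the off-diagonal blocks, analyzed via the characterization of essentially commuting Hankel and Toeplitz operators in \cite{GkZ1}, force exactly the additional $(u-\lambda)$-condition, with $\lambda$ identified as the value of $u$ at the point whose representing measure has support $S$.

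The hard part will be the bookkeeping required to reconcile the support-set-wise alternatives coming from these four blocks into the three clean cases of the theorem. A priori each block could fall into a different branch of the trichotomy, and one must show that compactness of the whole commutator forces a single common branch on each support set, with one constant $\lambda$ serving simultaneously for $f$ and $g$. Separating the subcase $|\lambda|=1$ (where $u \equiv \lambda$ on $S$, so $(u-\lambda)\bar f|_S \equiv 0$ is automatic) from $|\lambda|<1$ (where $u-\lambda$ is a genuine factor) will require careful function-algebra arguments on each support set, exploiting the logmodularity of $H^\infty|_S$.
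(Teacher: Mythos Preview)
Your broad outline---matrix representation of $D_\varphi$, block-by-block analysis of the commutator, then support-set analysis of each block---matches the paper's strategy. However, there are two genuine gaps in the sufficiency direction.

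First, your argument for Case $(3)$ is incorrect. You say that if $af+bg$ is constant on $S$, then $aD_f+bD_g$ lies, modulo compacts, in a commutative family. But the hypothesis is that \emph{for each} support set $S$ there exist constants $a_S,b_S$ (depending on $S$) with $(a_Sf+b_Sg)|_S$ constant; there is in general no single global linear combination, so you cannot form one operator $aD_f+bD_g$ and argue from that. The paper instead works locally: it shows that the rank-one expressions $H_{\bar g}k_z\otimes H_fk_z-H_{\bar f}k_z\otimes H_gk_z$ and their $u$-twisted analogues tend to zero as $z\to m$ under the local hypothesis on $S_m$.

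Second, and more fundamentally, you have no mechanism to pass from support-set-wise hypotheses to global compactness. Conditions $(1)$--$(3)$ are local to each support set, yet you must conclude a fixed operator is compact. The bridge the paper uses is the compactness criterion from \cite{GkZ2}: an element $K$ of the Toeplitz algebra with zero symbol is compact if and only if $\lim_{|z|\to1^-}\|K-T_{\phi_z}^*KT_{\phi_z}\|=0$. The diagonal blocks lie in the Toeplitz algebra directly; the off-diagonal blocks $K_2,K_3$ are handled by showing $K_2K_2^*$ and $K_3^*K_3$ do. Verifying the limit is then done support set by support set via the corona theorem, and that is where the local hypotheses actually enter. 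Without naming this machinery, ``verify that each of the three cases makes every block compact'' is not yet an argument.

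A smaller point: your identification of $\lambda$ as the value of $u$ at $m$ is not how $\lambda$ arises in the paper, and your proposed dichotomy $|\lambda|=1$ versus $|\lambda|<1$ is not the relevant one. The constant $\lambda$ emerges from solving a $2\times2$ linear system relating $H_{u\bar f}k_z,\,H_{u\bar g}k_z$ to $H_{\bar f}k_z,\,H_{\bar g}k_z$ (see Proposition~\ref{2d}); $|\lambda|=1$ does not force $u\equiv\lambda$ on $S$.
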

The above theorem  is  analogous to the characterization when
two Toeplitz operators are essentially commuting on the Hardy space \cite[Theorem 0.8]{GpZ}.

The second main result of our paper is the following characterization on the compactness of the semicommutator of two dual truncated Toeplitz operators.

\begin{thm}\label{MR2} Let $u$ be a nonconstant inner function and $f,g\in L^\infty$. The semicommutator $[D_f,D_g)$ is compact if and only if for each support set $S$, one of the following holds: \\
  $(1)$ $f|_S$, $g|_S$, $\left((u-\lambda)\overline{f}\right)|_{S}$, $\left((u-\lambda)\overline{g}\right)|_{S}$ and
  $\left((u-\lambda)\overline{fg}\right)|_{S}$ are in $H^\infty|_{S}$ for some constant $\lambda$; \\
  $(2)$ $\overline{f}|_S$, $\overline{g}|_S$, $\left((u-\lambda)f\right)|_{S}$, $\left((u-\lambda)g\right)|_{S}$ and
  $\left((u-\lambda)fg\right)|_{S}$ are in $H^\infty|_{S}$ for some constant  $\lambda$; \\
  $(3)$ either $f|_S$ or $g|_S$ is a constant.
\end{thm}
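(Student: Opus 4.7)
The plan is to reduce the compactness of the semicommutator $[D_f,D_g)=D_fD_g-D_{fg}$ to the compactness of four concrete sums of products of Toeplitz, Hankel, and dual Toeplitz operators on $H^2$ and $\overline{zH^2}$, and then to read off the three alternatives of the theorem on each support set $S$. First, using the decomposition $(K_u^2)^\perp=uH^2\oplus\overline{zH^2}$ and the unitary identification $uH^2\cong H^2$ via $h\mapsto uh$, I would establish the $2\times 2$ matrix representation
\[
D_\varphi\cong\begin{pmatrix} T_\varphi & H_{u\bar\varphi}^* \\ H_{u\varphi} & S_\varphi \end{pmatrix}.
\]

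Multiplying these matrices for $D_f$ and $D_g$ and applying the standard identities $T_fT_g-T_{fg}=-H_{\bar f}^*H_g$, $S_fS_g-S_{fg}=-H_fH_{\bar g}^*$, the intertwining relation $H_{u\psi}=H_\psi T_u$ (valid since $u$ is inner), the Hartman factorization $H_{\varphi\psi}=H_\varphi T_\psi+S_\varphi H_\psi$, and the identity $T_uT_u^*=I-P_{K_u^2}$, I would simplify the blocks of $[D_f,D_g)$ to
\[
\begin{pmatrix} T_{\bar u}H_{\bar f}^*H_gT_u-H_{\bar f}^*H_g & -H_{\bar f}^*H_{\bar u}H_{\bar g}^* \\ -H_fH_{\bar u}^*H_g & -H_fP_{K_u^2}H_{\bar g}^* \end{pmatrix}.
\]
The semicommutator is then compact if and only if each of these four blocks is. Note that the upper-left block is the \emph{twisting defect} $T_{\bar u}KT_u-K$ of $K=H_{\bar f}^*H_g$ with respect to the isometry $T_u$; this block is precisely what is missing from the matrix of the commutator treated in Theorem \ref{MR}, and it will carry the extra condition on $(u-\lambda)\overline{fg}$.

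Next, I would translate the compactness of each block into function-theoretic conditions on each support set $S$, combining the Axler--Chang--Sarason--Volberg theorem with the Gorkin--Zheng characterization \cite{GkZ1} of essentially commuting Hankel and Toeplitz operators. The off-diagonal and lower-right blocks, being triple Hankel-type products, encode that $f, g$ (or their conjugates) and $(u-\lambda)\bar f, (u-\lambda)\bar g$ (or the analogous versions with $u$ unconjugated) restrict to $H^\infty|_S$; compactness of the upper-left block, after using the identity $H_{\bar f}^*H_g=T_{fg}-T_fT_g$ and a second application of ACSV, is what pins down the additional condition on $(u-\lambda)\overline{fg}$.

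Finally, on each $S$ I would perform a case analysis to package these conditions into the three alternatives. The main obstacle lies here: in the commutator case of Theorem \ref{MR} a nontrivial linear relation $af+bg\equiv c$ on $S$ suffices, but the semicommutator is asymmetric (it involves the symbol $fg$), and the blocks $A_{12},A_{21},A_{22}$ each couple an $H_f$- or $H_{\bar f}$-factor with an $H_g$- or $H_{\bar g}^*$-factor, so once the two analytic cases (1) and (2) are excluded, simultaneous compactness of the blocks collapses the linear relation to the degenerate case, forcing $f|_S$ or $g|_S$ itself to be constant, i.e., case (3). The sufficiency direction is then routine: each of (1), (2), (3) makes every block compact via ACSV applied to the factored symbols on each support set.
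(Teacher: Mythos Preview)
Your block simplifications are correct and elegant: writing the four entries as $A_{11}=T_{\bar u}H_{\bar f}^*H_gT_u-H_{\bar f}^*H_g$, $A_{12}=-H_{\bar f}^*H_{\bar u}H_{\bar g}^*$, $A_{21}=-H_fH_{\bar u}^*H_g$, $A_{22}=-H_fP_{K_u^2}H_{\bar g}^*$ is a genuine tidying of the paper's expressions. But your assignment of which block carries which condition is backwards, and this is a real gap. Once you are in case~(1), i.e.\ $g|_S\in H^\infty|_S$, the rank-one identity $A_{11}-T_{\phi_z}^*A_{11}T_{\phi_z}=V\big[H_{u\bar f}k_z\otimes H_{ug}k_z-H_{\bar f}k_z\otimes H_gk_z\big]V^*$ shows $A_{11}$ is \emph{automatically} locally compact (both tensor legs involving $g$ vanish), so $A_{11}$ cannot pin down the extra condition on $(u-\lambda)\overline{fg}$. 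In the paper the logic runs the other way: the diagonal blocks $A_{11},A_{22}$ produce the dichotomy $\bar f|_S\in H^\infty|_S$ or $g|_S\in H^\infty|_S$ (resp.\ $f|_S$ or $\bar g|_S$), and it is the off-diagonal block $A_{12}$ that, under $f|_S,g|_S\in H^\infty|_S$, yields all three of $(u-\lambda)\bar f$, $(u-\lambda)\bar g$, $(u-\lambda)\overline{fg}\in H^\infty|_S$ --- the last via the computation $\widetilde{K_2}^*k_z=H_{(u-\lambda)\bar g}T_{\bar f}k_z-H_{(u-\lambda)\overline{fg}}k_z+S_{\bar g}H_{(u-\lambda)\bar f}k_z$.

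Moreover, invoking ACSV and \cite{GkZ1} is not enough to read off these conditions. ACSV characterizes compactness of a single semicommutator $H_{\bar f}^*H_g$, not of a twisting defect $T_{\bar u}KT_u-K$ or a triple Hankel product $H_{\bar f}^*H_{\bar u}H_{\bar g}^*$. The paper's necessity argument hinges on two ingredients you do not mention: the rank-one expansions $H_\varphi^*H_\psi-T_{\phi_z}^*H_\varphi^*H_\psi T_{\phi_z}=V(H_\varphi k_z\otimes H_\psi k_z)V^*$ and $H_\varphi T_\psi T_{\phi_z}-S_{\phi_z}H_\varphi T_\psi=-(H_\varphi k_z)\otimes (VH_\psi k_z)$, together with the outer-function lemma that $1-au$ is outer on $S$ whenever $|a|\le 1$ and $u$ is nonconstant on $S$; this is what allows one to pass from $\big((1-au)g\big)|_S\in H^\infty|_S$ to $g|_S\in H^\infty|_S$. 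Likewise, the sufficiency is not ``routine via ACSV'': for $A_{12}$ under condition~(1) one must show $\|\widetilde{K_2}Vk_z\|_2\to 0$, and the estimate genuinely uses $(u-\lambda)\overline{fg}\in H^\infty|_S$ (it appears explicitly after rewriting $H_{u\bar f}T_{\bar g}k_z-H_{\bar f}T_{u\bar g}k_z$ in terms of $u-\lambda$). Your outline has the right skeleton, but the attribution of the $\overline{fg}$ condition to the wrong block and the missing $k_z$/outer-function machinery are substantive gaps.
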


Theorem \ref{MR2} is analogous to the characterization for the compactness of the semicommutator of two Hardy Toeplitz operators (see \cite{ACS, V}).

As the proof of Theorem \ref{MR} is long, it is divided into the necessary part in Section 3 and the sufficient part in Section 4. We will present  the details for the proof of the necessary part and the sufficient part of Theorem \ref{MR2}  in Sections 5 and 6, respectively.

\section{ Notations and preliminaries}
In this section, we introduce some notations and include some important lemmas.
Let us  begin with the following matrix representation for the dual truncated Toeplitz operator on the space $(K_{u}^2)^{\perp}$, see \cite[Lemma 2]{SQD} for the details.
\begin{lem}\label{1a}
Suppose that $\varphi \in L^\infty$. The dual truncated Toeplitz operator $D_\varphi$ on $(K_{u}^2)^{\perp}$ is unitarily equivalent to the following $(2\times 2)$ operator matrix
\[
\left(\begin{array}{cccc}
    T_{\varphi} & H_{u\overline{\varphi}}^* \\
    H_{u\varphi} & S_\varphi
\end{array}\right)
\]
on the space $L^2=H^2\oplus \overline{zH^2}$. Moreover, the unitary operator here is given by
\[
U=\left(\begin{array}{cccc}
    M_u & 0\\
    0 & I
\end{array}\right).
\]
\end{lem}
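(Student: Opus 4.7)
The plan is a direct computation of $U^{*}D_{\varphi}U$ on the decomposition $L^{2}=H^{2}\oplus\overline{zH^{2}}$, using the identity $I-P_{u}=M_{u}PM_{\overline{u}}+(I-P)$ already recorded in the introduction. First I would check that $U$ is indeed unitary: because $u$ is inner, $|u|=1$ a.e.\ on $\partial\mathbb{D}$, so $M_{u}$ is an isometry on $L^{2}$ that restricts to a unitary $H^{2}\to uH^{2}$. Combining this with the identity on $\overline{zH^{2}}$ gives a unitary $U\colon H^{2}\oplus\overline{zH^{2}}\to uH^{2}\oplus\overline{zH^{2}}=(K_{u}^{2})^{\perp}$.

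Next, I would fix $f\in H^{2}$ and $g\in\overline{zH^{2}}$ and expand $D_{\varphi}U(f\oplus g)=(I-P_{u})\bigl(\varphi(uf+g)\bigr)$ using the above splitting. The piece coming from $M_{u}PM_{\overline{u}}$ equals $u\,P(\varphi f)+u\,P(\overline{u}\varphi g)=uT_{\varphi}f+u\,P(\overline{u}\varphi g)$ and lies in $uH^{2}$, while the piece coming from $I-P$ equals $(I-P)(\varphi uf)+(I-P)(\varphi g)=H_{u\varphi}f+S_{\varphi}g$ and lies in $\overline{zH^{2}}$.

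The one nontrivial identification is the cross term $P(\overline{u}\varphi g)=H_{u\overline{\varphi}}^{*}g$. This is a short duality calculation: for $f\in H^{2}$ and $g\in\overline{zH^{2}}$,
\[
\langle H_{u\overline{\varphi}}f,g\rangle=\langle (I-P)(u\overline{\varphi}f),g\rangle=\langle u\overline{\varphi}f,g\rangle=\langle f,\overline{u}\varphi g\rangle=\langle f,P(\overline{u}\varphi g)\rangle,
\]
where I use $g\perp H^{2}$ in the third equality and $f\in H^{2}$ in the last.

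Finally, applying $U^{*}=M_{\overline{u}}\oplus I$ strips the leading factor $u$ from the $uH^{2}$ component, yielding
\[
U^{*}D_{\varphi}U(f\oplus g)=\bigl(T_{\varphi}f+H_{u\overline{\varphi}}^{*}g\bigr)\oplus\bigl(H_{u\varphi}f+S_{\varphi}g\bigr),
\]
which is exactly the claimed $2\times 2$ matrix representation. There is no genuine obstacle: the whole argument is bookkeeping built on the decomposition $I-P_{u}=M_{u}PM_{\overline{u}}+(I-P)$, with the only mildly nonobvious point being the recognition of $P(\overline{u}\varphi g)$ as an adjoint Hankel entry.
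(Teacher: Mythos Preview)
Your proof is correct; the paper itself does not supply a proof of this lemma but simply refers to \cite{SQD} for the details, and your direct computation using $I-P_{u}=M_{u}PM_{\overline{u}}+(I-P)$ together with the duality identification $H_{u\overline{\varphi}}^{*}g=P(\overline{u}\varphi g)$ is precisely the standard verification one expects to find there. (One tiny labeling slip: it is the \emph{second} equality in your duality chain, not the third, that uses $g\perp H^{2}$ to drop the $(I-P)$.)
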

In view of the matrix representation in the above lemma, the essentially commuting  problem for two dual truncated Toeplitz operators can be easily transformed into the compactness of the following four classical operators.
\begin{lem}\label{1b}
Suppose that $u$ is a nonconstant inner function and $f,g\in L^\infty$. Then the commutator $D_fD_g-D_gD_f$ is compact if and only if
$$T_fT_g+H_{u\overline{f}}^*H_{ug}-T_gT_f-H_{u\overline{g}}^*H_{uf},$$
$$T_fH_{u\overline{g}}^*+H_{u\overline{f}}^*S_g-T_gH_{u\overline{f}}^*-H_{u\overline{g}}^*S_f,$$
$$H_{uf}T_g+S_fH_{ug}-H_{ug}T_f-S_gH_{uf}$$ and
$$H_{uf}H_{u\overline{g}}^*+S_fS_g-H_{ug}H_{u\overline{f}}^*-S_gS_f$$
are  compact.
\end{lem}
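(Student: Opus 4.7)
The strategy is to invoke the matrix representation of Lemma \ref{1a} and convert the compactness of $[D_f,D_g]$ into a statement about the entries of a $2 \times 2$ block operator matrix on $L^2 = H^2 \oplus \overline{zH^2}$. Since the unitary $U=\operatorname{diag}(M_u, I)$ from Lemma \ref{1a} does not depend on the symbol, I would first observe that
\[
U[D_f,D_g]U^* \;=\; (UD_fU^*)(UD_gU^*) - (UD_gU^*)(UD_fU^*).
\]
Because unitary conjugation preserves compactness, it suffices to analyze the commutator of the two matrix representations provided by Lemma \ref{1a}.

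Next I would perform the block multiplication explicitly. Writing
\[
UD_fU^* = \begin{pmatrix} T_f & H_{u\overline f}^* \\ H_{uf} & S_f \end{pmatrix}
\quad\text{and}\quad
UD_gU^* = \begin{pmatrix} T_g & H_{u\overline g}^* \\ H_{ug} & S_g \end{pmatrix},
\]
the usual formula for $2\times 2$ matrix products, applied entrywise, yields a block matrix whose $(1,1)$, $(1,2)$, $(2,1)$, $(2,2)$ entries are exactly the four operators displayed in the statement. Care must be taken only with the direction of the various maps; for instance $H_{u\overline g}^*\colon \overline{zH^2}\to H^2$, $H_{uf}\colon H^2\to \overline{zH^2}$, so every sum in the statement does live on the appropriate summand.

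To finish, I would appeal to the elementary fact that a $2\times 2$ operator matrix $T$ on a Hilbert-space direct sum $H^2\oplus\overline{zH^2}$ is compact if and only if each of its four entries is compact. One direction uses the two-sided ideal property of the compacts: writing $P_1, P_2$ for the orthogonal projections onto $H^2$ and $\overline{zH^2}$, each entry can be recovered as $P_iTP_j$, so $T$ compact forces each entry to be compact. The converse is immediate because $T$ is a finite sum of operators having only one nonzero block. Combined with the unitary equivalence above, this gives the claimed equivalence.

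There is no substantive obstacle in this argument: once Lemma \ref{1a} is in hand, everything reduces to careful block-matrix bookkeeping plus the standard equivalence between compactness of a block operator matrix and compactness of its entries. The only point that merits attention is tracking the domains and codomains of the Hankel-type entries, ensuring the four sums listed in the lemma are well-defined operators between the correct summands of $L^2$.
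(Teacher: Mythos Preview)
Your proposal is correct and follows essentially the same approach as the paper: conjugate by the unitary $U$ of Lemma~\ref{1a}, compute the block-matrix commutator, and reduce to the standard fact that a $2\times 2$ operator matrix is compact if and only if each entry is. The only cosmetic difference is that the paper verifies this last fact via weakly null sequences rather than your projection/ideal argument, and note that in the paper's convention the conjugation is $U^{*}D_\varphi U$ rather than $U D_\varphi U^{*}$, though this has no effect on the argument.
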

\begin{proof}
Let
$$T_1=T_fT_g+H_{u\overline{f}}^*H_{ug}-T_gT_f-H_{u\overline{g}}^*H_{uf},$$
$$T_2=T_fH_{u\overline{g}}^*+H_{u\overline{f}}^*S_g-T_gH_{u\overline{f}}^*-H_{u\overline{g}}^*S_f,$$
$$T_3=H_{uf}T_g+S_fH_{ug}-H_{ug}T_f-S_gH_{uf}$$
and
$$T_4=H_{uf}H_{u\overline{g}}^*+S_fS_g-H_{ug}H_{u\overline{f}}^*-S_gS_f.$$
Then we have by Lemma \ref{1a} that
\begin{align*}
&U^*(D_fD_g-D_gD_f)U\\
&=\left (\begin{matrix}
  T_{f} & H_{u\overline{f}}^* \\
    H_{uf} & S_f
  \end{matrix}\right)\left (\begin{matrix}
    T_{g} & H_{u\overline{g}}^* \\
    H_{ug} & S_g
  \end{matrix}\right )-\left(\begin{matrix}
   T_{g} & H_{u\overline{g}}^* \\
    H_{ug} & S_g
  \end{matrix}\right )\left (\begin{matrix}
  T_{f} & H_{u\overline{f}}^* \\
    H_{uf} & S_f
  \end{matrix}\right)\\
  &=\left (\begin{matrix}
  T_1  & T_2 \\
   T_3 & T_4
  \end{matrix}\right).
 \end{align*}
Denote the above operator matrix by $T$. So we need only to show that $T$ is compact if and only if $T_1$, $T_2$, $T_3$ and $T_4$ are  compact. For completeness, we include a proof here for this well-known result about operator matrix.

Suppose that $T$ is compact. Let $\{f_n\}_{n=1}^\infty$ be a sequence in $H^2$   converging weakly to 0. Then $\binom{f_n}{0}$ converges weakly to 0. Since $T$ is compact, we have
\begin{align*}
\begin{pmatrix} T_1 & T_2\\ T_3 & T_4\end{pmatrix}\binom{f_n}{0}
=\binom{T_1f_n}{T_3f_n}\rightarrow 0
\end{align*}
in $L^2$-norm, which gives that $\|T_1f_n\|_2\rightarrow 0$ and $\|T_3f_n\|_2\rightarrow 0$ as $n\rightarrow \infty$.

For any sequence $\{g_n\}_{n=1}^\infty$ in $\overline{zH^2}$ which converges weakly to 0, we similarly obtain that $\|T_2g_n\|_2\rightarrow 0$ and $\|T_4g_n\|_2\rightarrow 0$ as $n\rightarrow \infty$. Therefore, we have $T_1$, $T_2$, $T_3$ and $T_4$ are   compact.

Conversely we assume that $T_1$, $T_2$, $T_3$ and $T_4$ are compact. Let $\{h_n\}_{n=1}^\infty$ be a sequence in $L^2=H^2\oplus \overline{zH^2}$ such that $h_n$  converges weakly  to 0. Let $$h_n=\binom{f_n}{g_n},$$ where $f_n\in H^2$, and $g_n\in\overline{zH^2}$. Then both $f_n$ and $g_n$ converge weakly to $0$ as $n\rightarrow \infty$.

Noting that
\begin{align*}
\bigg\|\begin{pmatrix} T_1 & T_2\\ T_3 & T_4\end{pmatrix}\binom{f_n}{g_n} \bigg\|_2
&=\left\|\binom{T_1f_n+T_2g_n}{T_3f_n+T_4g_n}\right\|_2 \\
&\leqslant \|T_1f_n\|_2+\|T_2g_n\|_2+\|T_3f_n\|_2+\|T_4g_n\|_2,
\end{align*}
we conclude by the compactness of $T_1, T_2, T_3$ and $T_4$ that
\begin{align*}
\begin{pmatrix} T_1 & T_2\\ T_3 & T_4\end{pmatrix} \binom{f_n}{g_n}\rightarrow 0
\end{align*}
in $L^2$-norm, which implies that $T$ is compact. This completes the proof.
\end{proof}
Using the same method as in the proof of Lemma \ref{1b}, we obtain a similar conclusion for the compactness of the semicommutator  $[D_f,D_g)$.
\begin{lem}\label{semi-c}
Suppose that $u$ is a nonconstant inner function and $f,\ g\in L^{\infty}$. Then
the semicommutator $D_fD_g-D_{fg}$  is compact if and only if
$$T_fT_g+H_{u\overline{f}}^{*}H_{ug}-T_{fg},$$
$$T_fH_{u\overline{g}}^{*}+H_{u\overline{f}}^{*}S_g-H_{u\overline{fg}}^{*},$$
$$H_{uf}T_g+S_{f}H_{ug}-H_{ufg}$$
and
$$H_{uf}H_{u\overline{g}}^{*}+S_{f}S_{g}-S_{fg}$$
 are compact.
\end{lem}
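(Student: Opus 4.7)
The plan is to imitate the proof of Lemma \ref{1b} almost verbatim, replacing the commutator $D_fD_g-D_gD_f$ by the semicommutator $D_fD_g-D_{fg}$. The key structural observation is that, since the unitary $U$ in Lemma \ref{1a} is independent of the symbol, we have
\[
U^{*}(D_fD_g-D_{fg})U=(U^{*}D_fU)(U^{*}D_gU)-U^{*}D_{fg}U,
\]
and by Lemma \ref{1a} the right-hand side is a difference of explicit $2\times 2$ operator matrices whose entries are Toeplitz, Hankel and dual Toeplitz operators.

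The first step is to compute this difference directly. Multiplying the matrices from Lemma \ref{1a} gives
\[
(U^{*}D_fU)(U^{*}D_gU)=\begin{pmatrix} T_fT_g+H_{u\overline{f}}^{*}H_{ug} & T_fH_{u\overline{g}}^{*}+H_{u\overline{f}}^{*}S_g\\ H_{uf}T_g+S_fH_{ug} & H_{uf}H_{u\overline{g}}^{*}+S_fS_g\end{pmatrix},
\]
while $U^{*}D_{fg}U$ has entries $T_{fg}$, $H_{u\overline{fg}}^{*}$, $H_{ufg}$ and $S_{fg}$ in the obvious positions. Subtracting entry-by-entry produces exactly the four operators listed in the statement, so $U^{*}(D_fD_g-D_{fg})U$ is the $2\times 2$ operator matrix whose entries are those four operators. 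Since $U$ is unitary, $D_fD_g-D_{fg}$ is compact if and only if this $2\times 2$ matrix is compact.

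The second step is the standard fact that a $2\times 2$ operator matrix on $H^2\oplus\overline{zH^2}$ is compact if and only if each of its four entries is compact. This is done exactly as in the second half of the proof of Lemma \ref{1b}: for the forward direction, test compactness against weakly null sequences of the form $\binom{f_n}{0}$ with $f_n\in H^2$ and $\binom{0}{g_n}$ with $g_n\in\overline{zH^2}$ to extract compactness of the columns, hence of each entry; for the converse, apply the triangle inequality to a general weakly null sequence $\binom{f_n}{g_n}$ and use compactness of each entry to send the norm to zero.

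There is no real obstacle here: the argument is essentially bookkeeping, and the only point requiring mild care is checking that the matrix multiplication produces exactly the four operators advertised (in particular that the subtraction of $U^{*}D_{fg}U$ distributes correctly across the four entries). Once that verification is in place, the $2\times 2$ compactness criterion carries over from Lemma \ref{1b} without change and finishes the proof.
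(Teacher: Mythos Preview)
Your proposal is correct and follows exactly the approach the paper intends: the paper itself states that Lemma~\ref{semi-c} is obtained ``using the same method as in the proof of Lemma~\ref{1b},'' and your write-up carries out precisely that computation and the standard $2\times 2$ compactness argument. The matrix multiplication and subtraction you describe indeed yield the four operators listed, so nothing further is needed.
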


To study the compactness of  products of Hankel and Toeplitz operators on the Hardy space, the following operator $V$ is very useful.
Define the operator  $V:\ L^2\rightarrow L^2$ by $$Vf(z)=\overline{zf(z)}, \ \ \  f\in L^2, \ z\in \partial \mathbb D.$$
It is easy to check that  $V$ is anti-unitary and moreover,
 $$V=V^{-1}=V^*$$
 on  $L^2$. For a general anti-linear operator $V$, $V^*$ is the anti-linear operator  defined via the property
 $$\overline{\langle Vf, g\rangle}=\langle f, V^*g\rangle$$
for $f$ and $g$ in $L^2$.

 We will show  in the next lemma that the operator $V$ and the Hardy  projection $P$ satisfy the following equation.
\begin{lem}\label{2I}
For $f\in L^2$, then $$VP(f)=(I-P)V(f).$$
\end{lem}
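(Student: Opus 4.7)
The plan is to show that the anti-unitary $V$ interchanges the two summands in the orthogonal decomposition $L^2 = H^2 \oplus \overline{zH^2}$, after which the desired identity becomes just a rewriting of this decomposition. More precisely, I would aim to establish that $V(H^2) \subseteq \overline{zH^2}$ and $V(\overline{zH^2}) \subseteq H^2$. Once this is in hand, the lemma follows at once: for $f \in L^2$ write $f = Pf + (I-P)f$, apply $V$, and note that $V(Pf) \in \overline{zH^2}$ and $V((I-P)f) \in H^2$, so the orthogonal projection $(I-P)$ of $Vf$ onto $\overline{zH^2}$ picks out exactly $V(Pf) = VP(f)$.

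To verify the swap, the quickest route is to compute $V$ on the standard orthonormal basis $\{z^n\}_{n \in \Z}$ of $L^2$. For $n \geq 0$ one has $V(z^n)(z) = \overline{z \cdot z^n} = z^{-(n+1)} \in \overline{zH^2}$, and for $n \geq 1$ one has $V(z^{-n})(z) = \overline{z \cdot z^{-n}} = z^{n-1} \in H^2$. Since $V$ is a bounded (anti-linear) isometry, extending by continuity yields $V(H^2) \subseteq \overline{zH^2}$ and $V(\overline{zH^2}) \subseteq H^2$.

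There is no real obstacle here; the only small point to keep straight is that $V$ is anti-linear, so one must be careful to cite continuity rather than ordinary linearity when passing from monomials to arbitrary $L^2$ elements. After that the identity $VP(f) = (I-P)Vf$ is immediate from uniqueness of the orthogonal decomposition.
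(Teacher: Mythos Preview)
Your proposal is correct and follows essentially the same approach as the paper: both arguments rest on the fact that $V$ swaps the summands $H^2$ and $\overline{zH^2}$, after which the identity is immediate from the orthogonal decomposition. The only cosmetic difference is that the paper verifies this swap directly for a general $f_+\in H^2$ and $f_-\in\overline{zH^2}$ in a single chain of equalities, while you check it on the monomial basis and extend by continuity.
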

\begin{proof}
For any  $f$ in $L^2$, we write $f=f_{+}+f_{-},$
where $f_{+}=Pf$ and $f_{-}=(I-P)f$. Then  we have
\begin{align*}
VP(f)(w)&=Vf_{+}(w)\\
     &=\overline{w}\overline{f_{+}(w)}\\
     &=\overline{w}\overline{f_{+}(w)}+(I-P)(\overline{w}\overline{f_{-}(w)})\\
     &=(I-P)(\overline{w}\overline{f_{+}(w)}+\overline{w}\overline{f_{-}(w)})\\
     &=(I-P)(\overline{w}\overline{f(w)})\\
     &=(I-P)V(f)(w)
\end{align*}
for each $w\in \partial\mathbb D$, to complete the proof.
\end{proof}
\begin{rem}\label{rem1}
Observe that Lemma \ref{2I} easily leads to the following two relations:
$$VH_{\varphi}=H_{\varphi}^*V\  \ \ \  \mathrm{and} \ \ \ \ S_{\varphi}V=VT_{\overline{\varphi}},$$
 which will be used repeatedly later on.
\end{rem}

For $x$ and $y$ in $L^2$, we use $x\otimes y$ to denote the following rank-one operator: for $f\in L^2$,
$$(x\otimes y)(f)=\langle f, y\rangle x.$$

It is well-known that the operator norm of the above rank-one operator is given by  $\|x\otimes y\|=\|x\|_{2}\cdot \|y\|_2$.
The following two lemmas about the Toeplitz and Hankel operators on $H^2$ established in \cite[Lemmas 1 and 2]{Zheng} are useful tools to study the compactness of the product of Hankel operators and compact operators in the Toeplitz algebra.
\begin{lem}\label{2a}
Let $f$ and $g$ be in $L^2$, and $z\in \D$. Then
$$H_{f}^*H_g-T_{\phi_z}^*H_{f}^*H_gT_{\phi_z}=V\big[(H_fk_z)\otimes(H_gk_z)\big]V^*.$$
\end{lem}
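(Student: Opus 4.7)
The plan is to conjugate the asserted identity by the anti-unitary $V$ from Lemma \ref{2I}, reducing it to a form in which the only nontrivial ingredient is the standard rank-one projection identity $I-T_{\phi_z}T_{\phi_z}^*=k_z\otimes k_z$ on the Hardy space.

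First I would simplify the left-hand side using $\phi_z\in H^\infty$. The basic intertwining $H_g T_{\phi_z}=H_{g\phi_z}$ and its adjoint $T_{\phi_z}^* H_f^*=H_{f\phi_z}^*$ give
$$T_{\phi_z}^* H_f^* H_g T_{\phi_z}=H_{f\phi_z}^* H_{g\phi_z},$$
so the left-hand side of the identity equals $H_f^* H_g-H_{f\phi_z}^* H_{g\phi_z}$. Next I would sandwich both sides by $V$: using $V^2=I$, the relations $V H_\varphi V=H_\varphi^*$ and $V H_\varphi^* V=H_\varphi$ (both consequences of Remark \ref{rem1}), and the easily checked anti-unitary identity $V(x\otimes y)V^*=(Vx)\otimes(Vy)$, the stated identity becomes equivalent to
$$H_f H_g^*-H_{f\phi_z}H_{g\phi_z}^*=(H_f k_z)\otimes(H_g k_z).$$

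This equivalent form is the natural setting for the projection identity. Factoring $H_{f\phi_z}H_{g\phi_z}^*=H_f T_{\phi_z}T_{\phi_z}^* H_g^*$ and using that $\phi_z$ is an inner Blaschke factor vanishing at $z$---so $T_{\phi_z}$ is an isometry with range $\phi_z H^2$ and orthogonal complement $K_{\phi_z}^2=\mathrm{span}\{k_z\}$---gives $I-T_{\phi_z}T_{\phi_z}^*=k_z\otimes k_z$. Inserting this and using the trivial identity $A(k_z\otimes k_z)B^*=(Ak_z)\otimes(Bk_z)$ yields the right-hand side directly. The only subtle point is the anti-linear bookkeeping when transferring the $V$-conjugation across the various operators and rank-one pieces; once those identities are verified carefully, the proof reduces to a few lines of algebra with no substantive obstacle.
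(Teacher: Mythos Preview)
Your proof is correct. The paper does not give its own proof of this lemma but cites \cite[Lemmas 1 and 2]{Zheng}; your argument is essentially the standard one, hinging on the rank-one projection identity $I-T_{\phi_z}T_{\phi_z}^*=k_z\otimes k_z$ (which the paper also invokes elsewhere, e.g.\ in the proof of Lemma \ref{2g}).

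The only cosmetic difference is that you conjugate by $V$ first and then apply the projection identity on $H^2$, whereas the more direct route stays on the $H_f^*H_g$ side: one uses $S_{\phi_z}H_g=H_gT_{\phi_z}$ to write the left-hand side as $H_f^*(I-S_{\phi_z}^*S_{\phi_z})H_g$, then observes via $VS_{\overline{\phi_z}}V=T_{\phi_z}$ that $I-S_{\phi_z}^*S_{\phi_z}=(Vk_z)\otimes(Vk_z)$, and finally uses $H_f^*V=VH_f$ to obtain $V\big[(H_fk_z)\otimes(H_gk_z)\big]V^*$. Both routes are short and rest on the same two ingredients---the intertwining relations of Remark \ref{rem1} and the fact that $\phi_z$ is a single Blaschke factor---so there is no substantive difference.
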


\noindent Here $$k_z(e^{i\theta})=\frac{\sqrt{1-|z|^2}}{1-\overline{z}e^{i\theta}}$$ is the normalized reproducing kernel for the Hardy space, and $\phi_z$ denotes  the M\"{o}bius map:
$$\phi_z(w)=\frac{z-w}{1-\overline{z}w} \ \ \ \ \  \big(z, w\in \mathbb D\big).$$

\begin{lem}\label{2b}
Let $K$ be a compact operator on $H^2$. Then we have
$$\lim_{|z|\rightarrow 1^-} \|K-T_{\phi_z}^*KT_{\phi_z}\|=0.$$
\end{lem}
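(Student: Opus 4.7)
The plan is to route everything through the strong operator topology (SOT). It suffices to show that for every sequence $\{z_n\}$ in $\mathbb D$ with $|z_n| \to 1^-$ and $z_n \to \zeta \in \partial\mathbb D$ (such a subsequence always exists by compactness of $\overline{\mathbb D}$), we have $\|K - T_{\phi_{z_n}}^* K T_{\phi_{z_n}}\| \to 0$; otherwise some sequence with norms bounded below by $\varepsilon>0$ would, after passing to a convergent subsequence of its $z_n$, contradict this.

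The crucial observation is that $\phi_z$ converges pointwise almost everywhere on $\partial\mathbb D$ to the constant $\zeta$: for $w \in \partial\mathbb D$ with $w \neq \zeta$, the identity $1 - \bar\zeta w = \bar\zeta(\zeta - w)$ gives $\phi_z(w) = (z-w)/(1-\bar z w) \to \zeta$ as $z \to \zeta$. Since $|\phi_z|=1$ on $\partial\mathbb D$, the dominated convergence theorem yields $\phi_{z_n} f \to \zeta f$ in $L^2$ for each $f \in H^2$, and hence $T_{\phi_{z_n}} \to \zeta I$ in SOT; applying the same argument to $\bar\phi_{z_n}$ together with the boundedness of $P$ gives $T_{\phi_{z_n}}^* \to \bar\zeta I$ in SOT. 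Setting $A_n = T_{\phi_{z_n}} - \zeta I$ and $B_n = T_{\phi_{z_n}}^* - \bar\zeta I$, both sequences are uniformly bounded and SOT-null, and $|\zeta|^2=1$ gives the identity
\[
T_{\phi_{z_n}}^* K T_{\phi_{z_n}} - K \;=\; \bar\zeta\, K A_n \;+\; \zeta\, B_n K \;+\; B_n K A_n.
\]

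The remaining task is to upgrade SOT-nullity to operator-norm nullity of each term, which is where the compactness of $K$ enters. I will invoke the standard fact that a uniformly bounded, SOT-null sequence of operators converges to $0$ \emph{uniformly} on relatively compact subsets of $H^2$ (proved by a quick subsequence argument using norm compactness). Applied to the relatively compact set $K(\{f:\|f\|\le 1\})$, this gives $\|B_n K\| = \sup_{\|f\|\le 1} \|B_n(Kf)\| \to 0$; taking adjoints and applying the same principle to the compact operator $K^*$ and the SOT-null sequence $A_n^* = B_n$ yields $\|K A_n\| = \|A_n^* K^*\| \to 0$; and $\|B_n K A_n\| \le \|B_n\|\,\|K A_n\| \to 0$. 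The main subtlety worth flagging is that a naive rank-one, triangle-inequality attack fails — for instance, $T_{\phi_z}^* 1 = \bar z$, which does not tend to $1$ in $H^2$ as $|z|\to 1$ — so the needed cancellation only lives at the level of the rank-one operator $x \otimes y - T_{\phi_z}^* x \otimes T_{\phi_z}^* y$, and the SOT route sidesteps this cleanly.
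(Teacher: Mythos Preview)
Your argument is correct. The subsequence reduction, the a.e.\ convergence $\phi_{z_n}\to\zeta$ via $1-\bar\zeta w=\bar\zeta(\zeta-w)$, the dominated convergence step giving $T_{\phi_{z_n}}\to\zeta I$ and $T_{\phi_{z_n}}^*\to\bar\zeta I$ in SOT, the algebraic identity for $T_{\phi_{z_n}}^*KT_{\phi_{z_n}}-K$, and the passage from SOT to norm convergence using compactness of $K$ (and of $K^*$ for the $KA_n$ term) are all sound.

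The paper does not give its own proof of this lemma; it is quoted from \cite[Lemma~2]{Zheng}. The closest in-paper analogue is the proof of Lemma~\ref{2h}, where the authors reduce to rank-one $K=f\otimes g$, write
\[
S_{\phi_z}K-KT_{\phi_z}=(S_{\phi_z}f-\xi f)\otimes g+f\otimes(\bar\xi g-T_{\phi_z}^*g),
\]
and use the same dominated-convergence fact that $\phi_z\to\xi$ pointwise a.e.\ to kill each piece. Your SOT route and the paper's rank-one route rest on the same analytic core (pointwise convergence of $\phi_z$ to a unimodular constant plus dominated convergence); the difference is only packaging. Your abstraction has the advantage of avoiding the explicit rank-one bookkeeping and makes transparent exactly where compactness is spent, while the rank-one computation is more concrete and matches the style used elsewhere in the paper. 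Your closing remark about why $T_{\phi_z}^*f\not\to f$ in norm is accurate and correctly identifies that the $\zeta$-shift (equivalently, the identity $|\zeta|^2=1$) is where the cancellation lives in both approaches.
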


 As in \cite{Gar}, a Douglas algebra is, by definition, a closed subalgebra of $L^\infty$ which contains $H^\infty$. As Douglas algebras play a prominent role in various problems on Toeplitz and Hankel operators, we need to review some important properties of them. Observe that $H^\infty$ is a commutative Banach algebra, we can identify the maximal ideal space $\mathcal{M}(H^\infty)$ as the set of multiplicative linear functionals on $H^\infty$. Endowed with the weak star topology it inherits as a subset of the dual space of $H^\infty$, $\mathcal{M}(H^\infty)$ is a compact Hausdorff space. Identifying a point in the open unit disk $\D$ with the functional of evaluation at this point, we may regard the disk $\D$ as a subset of $\mathcal{M}(H^\infty)$. Using the Gelfand transform we regard every function in $H^\infty$ as a continuous function on $\mathcal{M}(H^\infty)$. The deepest result concerning $\mathcal{M}(H^\infty)$ is the famous corona theorem of Carleson, stating that $\D$ is dense in $\mathcal{M}(H^\infty)$ under the weak star topology (for details, see \cite{Dur} and \cite{Gar}).

It is a consequence of the Gleason-Whitney theorem that the maximal ideal space of a Douglas algebra $B$ is a naturally imbedded in $\mathcal{M}(H^\infty)$. Thus we may identify the maximal ideal space $\mathcal{M}(H^\infty+C)$ of the Sarason algebra $H^\infty+C$ with a subset of $\mathcal{M}(H^\infty)$, where $C$ is the algebra of continuous functions on $\partial\mathbb D$. A subset of $\mathcal{M}(L^\infty)$ will be a support set if it is the (closed) support of the representing measure for a functional in $\mathcal{M}(H^\infty+C)$, see \cite{Gar} and \cite{Hof} for more details. Let $m$ be in $\mathcal{M}(H^\infty+C)$ and let $d\mu_m$ denote the unique representing measure for $m$ with support $S_m$, i.e.,\\
(1)~ for all $f$ and $g$ in $H^\infty$, $$m(fg)=\int_{S_m}fg~d\mu_m=\bigg(\int_{S_m} f d\mu_m\bigg)\bigg(\int_{S_m} g d\mu_m\bigg); $$
(2)~ if $h\geqslant 0$ a.e. in $L^1(d\mu_m)$ such that $$\int_{S_m}fh~d\mu_m=\int_{S_m} f d\mu_m$$ for all $f\in H^\infty$, then we have $h=1$ a.e. $d\mu_m$.

Suppose that $m\in \mathcal{M}(H^\infty+C)$ and $z\mapsto \xi_z$ is a mapping from the unit disk $\mathbb D$ into some topological space $X$. Let $\eta$ be in $X$. We use the notation
$$\lim_{z\rightarrow m} \xi_z=\eta$$
to denote that for each open set $\mathcal{U}(\eta)\subset X$ containing $\eta$, there exists an open subset $\mathcal{O}(m)$ of $\mathcal{M}(H^\infty+C)$  containing $m$ such that $\xi_z\in \mathcal{U}$ for all $z\in \mathcal{O}(m)\cap \mathbb D$.

For a function $F$ on the disk $\D$ and $m$ in $\mathcal{M}(H^\infty+C)$, we say
$$\lim_{z\rightarrow m} F(z)=0$$
if for every net $\{z_\alpha\}\subset \D$ converging to $m$,
$$\lim_{z_\alpha\rightarrow m} F(z_\alpha)=0.$$
We shall emphasize here that we  deal with nets rather than sequences since the the topology of $\mathcal{M}(H^\infty+C)$ is not metrizable.

With the above notations and concepts about $H^2$ theory on a support set, we quote the following lemma  obtained in \cite[Lemmas 2.5 and 2.6]{GpZ}.
\begin{lem}\label{2e}
Let $f$ be in $L^\infty$ and $m\in \mathcal M(H^\infty+C)$. Denote the support set for $m$ by $S_m$. Then
the following three conditions are equivalent:\\
$(1)$ $f|_{S_m}\in H^\infty|_{S_m}$;\vspace{1.5mm}\\
$(2)$ $\lim\limits_{z\rightarrow m}\|H_{f}k_z\|_2=0$;\vspace{1.5mm}\\
$(3)$ $\varliminf\limits_{z\rightarrow m}\|H_{f}k_z\|_2=0$.
\end{lem}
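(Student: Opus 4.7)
My plan is to establish the cycle $(2)\Rightarrow(3)\Rightarrow(1)\Rightarrow(2)$, of which only the last two implications require real work. The implication $(2)\Rightarrow(3)$ is immediate since $\varliminf$ agrees with $\lim$ whenever the latter exists. Both nontrivial directions will rest on the following standard fact from the theory of representing measures: for every $\phi\in L^\infty$, the Poisson integral $\widetilde\phi(z)=\int_{\partial\mathbb D}\phi|k_z|^2\,\frac{d\theta}{2\pi}$ satisfies $\widetilde\phi(z_\alpha)\to\int_{S_m}\phi\,d\mu_m$ along every net $z_\alpha\to m$ in $\mathcal M(H^\infty+C)$, because the probability measures $|k_{z_\alpha}|^2\,d\theta/2\pi$ converge weak-$*$ to $d\mu_m$ on $\mathcal M(L^\infty)$.

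For $(1)\Rightarrow(2)$, the idea is to choose $h\in H^\infty$ with $f|_{S_m}=h|_{S_m}$ and exploit that $H_h=0$. This gives $H_f=H_{f-h}$ and the elementary bound
\[
\|H_f k_z\|_2^2=\|H_{f-h}k_z\|_2^2\leq\|(f-h)k_z\|_2^2=\widetilde{|f-h|^2}(z),
\]
which combined with the preceding fact forces $\|H_f k_z\|_2^2\to\int_{S_m}|f-h|^2\,d\mu_m=0$ as $z\to m$, delivering (2).

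For $(3)\Rightarrow(1)$, given a net $z_\alpha\to m$ with $\|H_f k_{z_\alpha}\|_2\to 0$, I plan to use the orthogonal decomposition $fk_{z_\alpha}=T_f k_{z_\alpha}+H_f k_{z_\alpha}$ to produce an element $h_\star\in H^\infty$ satisfying $f=h_\star$ on $S_m$. The analytic functions $g_\alpha:=T_f k_{z_\alpha}\in H^2$ satisfy $\|g_\alpha\|_2\leq\|f\|_\infty$ and $\|fk_{z_\alpha}-g_\alpha\|_2=\|H_f k_{z_\alpha}\|_2\to 0$. Using the identity $H_f^*H_f=T_{|f|^2}-T_{\overline{f}}T_f$, the hypothesis is equivalent to $\widetilde{|f|^2}(z_\alpha)-\|T_f k_{z_\alpha}\|_2^2\to 0$. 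By extracting a weak-$*$ convergent subnet of the Smirnov-class quotients $g_\alpha/k_{z_\alpha}$, I expect to obtain $h_\star\in H^\infty$ with $\int_{S_m}|f-h_\star|^2\,d\mu_m=0$, which yields $f|_{S_m}\in H^\infty|_{S_m}$.

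The main obstacle will be the $(3)\Rightarrow(1)$ direction, specifically producing an honest $H^\infty$-limit function rather than merely a ratio of $H^2$-functions. I expect to handle this by combining weak-$*$ compactness in the unit ball of $H^\infty$ with the Gleason-Whitney identification of the representing measure $\mu_m$, exploiting that $\mu_m$ is multiplicative on $H^\infty$. The crucial input will be the weak-$*$ convergence of $|k_{z_\alpha}|^2\,d\theta/2\pi$ to $d\mu_m$, together with an equi-integrability argument ensuring that the bound $|h_\star|\leq\|f\|_\infty$ persists in the limit and that the interchange of limits in the weighted integral $\int|f-g_\alpha/k_{z_\alpha}|^2|k_{z_\alpha}|^2\,d\theta/2\pi$ is justified.
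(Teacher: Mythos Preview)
The paper does not prove this lemma; it is quoted from \cite[Lemmas~2.5 and~2.6]{GpZ} without argument, so there is no in-paper proof to compare against.

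On the merits of your proposal: $(2)\Rightarrow(3)$ is trivial and your $(1)\Rightarrow(2)$ is correct and standard. The $(3)\Rightarrow(1)$ sketch, however, has a genuine gap at precisely the point you identify as the main obstacle. The quotients $g_\alpha/k_{z_\alpha}=T_fk_{z_\alpha}/k_{z_\alpha}$ lie only in the Smirnov class $N^+$; your available estimate $\int|g_\alpha/k_{z_\alpha}|^2\,|k_{z_\alpha}|^2\,d\theta/2\pi=\|T_fk_{z_\alpha}\|_2^2\le\|f\|_\infty^2$ is an $L^2$ bound against the \emph{moving} measures $\mu_{z_\alpha}$ and gives no uniform $L^\infty$ control, so weak-$*$ compactness in the unit ball of $H^\infty$ is unavailable, and neither ``equi-integrability'' nor multiplicativity of $\mu_m$ supplies the missing bound. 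The argument in \cite{GpZ} does not attempt to build an explicit $h_\star$ by a limiting procedure. The key additional ingredient is the structural fact, special to support sets of points in $\mathcal M(H^\infty+C)$, that a bounded function lying in the $L^2(\mu_m)$-closure of $H^\infty$ already belongs to $H^\infty|_{S_m}$ (equivalently $H^\infty|_{S_m}=H^2(\mu_m)\cap L^\infty(\mu_m)$; see \cite{Gar,Hof}). With this in hand the problem reduces to placing $f|_{S_m}$ in $H^2(\mu_m)$, which is what the combination of $\|H_fk_{z_\alpha}\|_2\to 0$ and the weak-$*$ convergence $\mu_{z_\alpha}\to\mu_m$ is suited to give.
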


\section{\label{S1} The necessary part of  Theorem \ref{MR}}
In this section, we assume that  $D_fD_g-D_gD_f$ is a compact operator. Recall that the four operators in  Lemma \ref{1b} are  compact. Now we are going to derive the necessary condition for the compactness of these four operators in terms of the boundary properties of the symbols $f$ and $g$.

In the following proposition, we establish a necessary condition for the compactness of  the first operator $T_fT_g+H_{u\overline{f}}^*H_{ug}-T_gT_f-H_{u\overline{g}}^*H_{uf}$  given in Lemma \ref{1b}.
\begin{prop}\label{2c}
Let $u$ be a nonconstant inner function, $f,g\in L^\infty$ and $m\in \mathcal M(H^\infty + C).$ Suppose that the operator  $$T_fT_g+H_{u\overline{f}}^*H_{ug}-T_gT_f-H_{u\overline{g}}^*H_{uf}$$ is compact. Then for the support set $S_m$ of $m$, one of following  conditions holds:\\
  $(1)$ both $f|_{S_m}$ and $g|_{S_m}$ are in $H^\infty|_{S_m}$;\\
  $(2)$ both $\overline{f}|_{S_m}$ and $\overline{g}|_{S_m}$ are in $H^\infty|_{S_m}$;\\
  $(3)$ there exist constants $a$, $b$, not both zero, such that $(af+bg)|_{S_m}$ is a constant.
\end{prop}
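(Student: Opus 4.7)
The strategy is to reduce the compactness hypothesis to a rank-one perturbation identity via Lemmas~\ref{2a} and~\ref{2b} and then extract the trichotomy (1)--(3) by using Lemma~\ref{2e} to translate the resulting operator-norm estimate into the vanishing, along an appropriate subnet, of Hankel test vectors.

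Using the Toeplitz-Hankel identity $T_\varphi T_\psi-T_{\varphi\psi}=-H_{\bar\varphi}^*H_\psi$, I would first rewrite
\[
A:=T_fT_g+H_{u\bar f}^*H_{ug}-T_gT_f-H_{u\bar g}^*H_{uf}=H_{u\bar f}^*H_{ug}-H_{\bar f}^*H_g-H_{u\bar g}^*H_{uf}+H_{\bar g}^*H_f
\]
as a sum of four Hankel products. Applying Lemma~\ref{2a} to each summand and collecting the resulting rank-one identities gives
\[
A-T_{\phi_z}^*AT_{\phi_z}=V\Theta_zV^*,
\]
with
\[
\Theta_z=(H_{u\bar f}k_z)\otimes(H_{ug}k_z)-(H_{\bar f}k_z)\otimes(H_g k_z)-(H_{u\bar g}k_z)\otimes(H_{uf}k_z)+(H_{\bar g}k_z)\otimes(H_f k_z).
\]
Because $u$ is inner, $H_u=0$ yields $H_{u\varphi}=S_uH_\varphi$ for every $\varphi\in L^\infty$, and a direct computation compresses $\Theta_z$ into $S_u\Psi_zS_u^*-\Psi_z$ with
\[
\Psi_z=(H_{\bar f}k_z)\otimes(H_g k_z)-(H_{\bar g}k_z)\otimes(H_f k_z),
\]
the rank-one expression produced by Lemma~\ref{2a} for the commutator $[T_f,T_g]$ in the Gorkin-Zheng analysis. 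Invoking Lemma~\ref{2b} together with the hypothesis that $A$ is compact forces $\|\Theta_z\|_{\mathrm{op}}\to 0$ as $|z|\to 1^-$.

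Fix a net $\{z_\alpha\}\subset\mathbb D$ with $z_\alpha\to m$, so that $\|\Theta_{z_\alpha}\|_{\mathrm{op}}\to 0$. Lemma~\ref{2e} translates conditions (1), (2), (3) into the vanishing along a subnet of $\{\|H_fk_{z_\alpha}\|_2,\|H_gk_{z_\alpha}\|_2\}$, of $\{\|H_{\bar f}k_{z_\alpha}\|_2,\|H_{\bar g}k_{z_\alpha}\|_2\}$, or of a nontrivial pair $\{\|aH_fk_{z_\alpha}+bH_gk_{z_\alpha}\|_2,\|\bar aH_{\bar f}k_{z_\alpha}+\bar bH_{\bar g}k_{z_\alpha}\|_2\}$ respectively. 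After passing to a subnet along which all eight norms $\|H_\varphi k_{z_\alpha}\|_2$ and every pairwise inner product converge, the proof reduces to a finite case analysis: in each branch certain rank-one terms of $\Theta_{z_\alpha}$ drop out, and the remaining vanishing-norm relations, extracted by testing $\Theta_{z_\alpha}$ against the vectors $H_\varphi k_{z_\alpha}$, should combine via the Gorkin-Zheng trichotomy in \cite{GpZ} to give one of (1)--(3).

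The main expected obstacle is the coupling between the twisted and untwisted Hankel contributions packaged into $\Theta_z=S_u\Psi_zS_u^*-\Psi_z$. The behavior depends sensitively on $|u(m)|$ through the identity $\|H_{\bar u}k_{z_\alpha}\|_2^2=1-|u(z_\alpha)|^2\to 1-|u(m)|^2$: in the non-degenerate regime $|u(m)|<1$, $S_u$ is genuinely non-scalar along the net, and one can invoke the structural relations $S_uS_{\bar u}=I$ and $S_{\bar u}S_u=I-H_{\bar u}H_{\bar u}^*$ to separate the twisted and untwisted pieces of $\Theta_z$ and transfer $\|\Theta_{z_\alpha}\|_{\mathrm{op}}\to 0$ into $\|\Psi_{z_\alpha}\|_{\mathrm{op}}\to 0$, after which the classical Gorkin-Zheng trichotomy applies directly. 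In the degenerate regime $|u(m)|=1$, however, the maximum-modulus principle forces $u|_{S_m}$ to be the constant $u(m)$, so by Lemma~\ref{2e} the twisted rank-one terms are asymptotic to unimodular scalar multiples of their untwisted counterparts and largely cancel in $\Theta_z$; a more delicate analysis of the asymptotic Gram structure of the eight Hankel vectors will then be required to recover (1)--(3) from the compactness of $A$.
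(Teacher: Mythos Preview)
Your initial reduction is correct and matches the paper: rewriting $A$ as a sum of four Hankel products, applying Lemma~\ref{2a}, and obtaining $\|\Theta_z\|\to 0$ as $|z|\to 1^-$ is exactly how the paper begins (their equation~(\ref{2.a})). The observation $\Theta_z=S_u\Psi_zS_u^*-\Psi_z$ is also correct.

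The gap is in the next step. Your plan for the ``non-degenerate'' regime $|u(m)|<1$ is to transfer $\|\Theta_{z_\alpha}\|\to 0$ into $\|\Psi_{z_\alpha}\|\to 0$ using $S_uS_{\bar u}=I$ and $S_{\bar u}S_u=I-H_{\bar u}H_{\bar u}^*$, and then quote the Gorkin--Zheng trichotomy. But nothing in those relations forces this: $\|S_u\Psi S_u^*-\Psi\|$ small only says $\Psi$ is an approximate fixed point of conjugation by the co-isometry $S_u$, not that $\Psi$ is small. Concretely, testing $\Theta_{z_\alpha}$ against the Hankel vectors yields linear relations among $H_{\bar f}k_z,\,H_{\bar g}k_z,\,S_uH_{\bar f}k_z,\,S_uH_{\bar g}k_z$, and one cannot eliminate the $S_u$-twisted vectors without an additional analytic input. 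In the degenerate regime $|u(m)|=1$ you correctly note that $\Theta_z\to 0$ becomes tautological, so the compactness of $A$ gives no information at the level of $\Theta_z$; your ``more delicate analysis will then be required'' is not a proof.

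The paper does \emph{not} attempt to reduce to the Gorkin--Zheng result. Instead it runs a direct case analysis on $\dim\mathrm{span}\{[f|_{S_m}],[g|_{S_m}]\}$ in $(L^\infty|_{S_m})/(H^\infty|_{S_m})$, and within the nontrivial cases derives relations of the form $H_{(1-au)g}k_z\to 0$ with $|a|\le 1$. The key technical ingredient your plan is missing is the next step: since $1-au$ is an \emph{outer} function on $S_m$ (when $u|_{S_m}$ is nonconstant), one can approximate $1$ by $p(1-au)$ with $p\in H^\infty$ and thereby pass from $\|H_{(1-au)g}k_z\|_2\to 0$ to $\|H_gk_z\|_2\to 0$, yielding the contradiction. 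This outer-function division argument, together with a Jordan-form reduction of the resulting $2\times 2$ coefficient system in the two-dimensional case (the paper's Claim~\ref{2aa} and equations~(\ref{2.i})--(\ref{2.j})), is what actually closes the proof; it replaces your hoped-for reduction to $\|\Psi_z\|\to 0$.
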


\begin{proof}
Suppose that $$T_fT_g+H_{u\overline{f}}^*H_{ug}=T_gT_f+H_{u\overline{g}}^*H_{uf}+K,$$  where $K$ is compact. Since $T_fT_g-T_gT_f=H_{\overline{g}}^*H_f-H_{\overline{f}}^*H_g$,
we have $$H_{\overline{g}}^*H_f-H_{\overline{f}}^*H_g=H_{u\overline{g}}^*H_{uf}-H_{u\overline{f}}^*H_{ug}+K.$$
By Lemmas \ref{2a} and \ref{2b}, we have

\begin{eqnarray*}
K-T_{\phi_z}^*KT_{\phi_z}\vspace{2mm}=V\left(H_{\overline{g}}k_z\otimes H_fk_z-H_{\overline{f}}k_z\otimes H_gk_z-H_{u\overline{g}}k_z\otimes H_{uf}k_z+H_{u\overline{f}}k_z\otimes H_{ug}k_z\right)V^*
\end{eqnarray*}
and
\begin{equation}\label{2.a}
\begin{array}{l}
H_{\overline{g}}k_z\otimes H_fk_z-H_{\overline{f}}k_z\otimes H_gk_z=H_{u\overline{g}}k_z\otimes H_{uf}k_z-H_{u\overline{f}}k_z\otimes H_{ug}k_z+\varepsilon(z),
\end{array}
\end{equation}
where the operator $\varepsilon(z)$ satisfies that $\lim\limits_{|z|\rightarrow 1^-}\|\varepsilon(z)\|=0.$

In the following, we still use the same notation $\varepsilon(z)$ to denote the various terms such that $$\|\varepsilon(z)\|\rightarrow 0\ \ \ \ \ (|z|\rightarrow 1^-)$$ for simplicity.

For $m\in \mathcal M(H^\infty+C)$, we write $$[f|_{S_m}],\ [g|_{S_m}]\in (L^\infty|_{S_m}) / (H^\infty|_{S_m}),$$
where $[f|_{S_m}]$ denotes the coset $\{f|_{S_m}+h|_{S_m}:h|_{S_m}\in H^{\infty}|_{S_m}\}.$
As $(L^\infty|_{S_m}) / (H^\infty|_{S_m})$ is a Banach space,  we  consider the following three cases:
\begin{enumerate}
      \item $\mathrm{dim}\left(\mathrm{span}\{[f|_{S_m}],\ [g|_{S_m}]\}\right)=0$;
      \item $\mathrm{dim}\left(\mathrm{span}\{[f|_{S_m}],\ [g|_{S_m}]\}\right)=1$;
      \item $\mathrm{dim}\left(\mathrm{span} \{[f|_{S_m}],\ [g|_{S_m}]\}\right)=2$.
\end{enumerate}

\textbf{Case 1.} If $\mathrm{dim}\left(\mathrm{span} \{[f|_{S_m}],\ [g|_{S_m}]\}\right)=0$, then $[f|_{S_m}]=\ [g|_{S_m}]=0$, which implies that $f|_{S_m},g|_{S_m}\in H^\infty|_{S_m}$.

\textbf{Case 2.} If $\mathrm{dim}\left(\mathrm{span} \{[f|_{S_m}],\ [g|_{S_m}]\}\right)=1$, we may assume that $[g|_{S_m}]\neq 0$. Then there is a constant $c$ such that $[f|_{S_m}]=c[g|_{S_m}]$. By Lemma \ref{2e}, now (\ref{2.a}) can be rewritten as follows:
\begin{equation*}
\begin{array}{l}
H_{\overline{g}}k_z\otimes H_{cg}k_z-H_{\overline{f}}k_z\otimes H_gk_z \vspace{2mm}\\
=H_{u\overline{g}}k_z\otimes H_{c ug}k_z-H_{u\overline{f}}k_z\otimes H_{ug}k_z+\varepsilon(z),
\end{array}
\end{equation*}
to obtain
\begin{eqnarray} \label{2.b}
H_{\overline{cg}-\overline{f}}k_z\otimes H_gk_z=H_{u(\overline{cg}-\overline{f})}k_z\otimes H_{ug}k_z+\varepsilon(z),
\end{eqnarray}
where $\varepsilon(z)$ satisfies that $\|\varepsilon(z)\| \rightarrow 0$ as $|z|\rightarrow 1^-$.

To derive the desired conclusions, we are going to discuss two cases. First, if $$\varliminf_{z\rightarrow m}\|H_{\overline{cg}-\overline{f}}k_z\|_2=0,$$ then $\left(\overline{cg}-\overline{f}\right)|_{S_m}\in H^\infty|_{S_m}$.
Since $\left[(f-cg)|_{S_m}\right]=0$, we see that $(f-cg)|_{S_m}$ must be a constant.

 Now we need to analyse the case of
$$\varliminf_{z\rightarrow m}\|H_{\overline{c}\overline{g}-\overline{f}}k_z\|_2> 0.$$
By (\ref{2.b}), we have
\begin{eqnarray*}
\langle H_{\overline{cg}-\overline{f}}k_z,H_{\overline{cg}-\overline{f}}k_z\rangle H_gk_z=\langle H_{u(\overline{cg}-\overline{f})}k_z,H_{\overline{cg}-\overline{f}}k_z\rangle H_{ug}k_z+\varepsilon(z).
\end{eqnarray*}
Thus there exists a constant $a(z)$ depending  on $z$ such that
$$H_{g}k_z=a(z)H_{ug}k_z+\varepsilon(z),$$
where $a(z)$ satisfies that
$$|a(z)|=\left|\frac{\langle H_{u(\overline{c}\overline{g}-\overline{f})}k_z,H_{\overline{c}\overline{g}-\overline{f}}k_z\rangle }{\|H_{\overline{c}\overline{g}-\overline{f}}k_z\|_2^2}\right|\leqslant |u|\leqslant 1$$
for all $z\in \mathcal{O}(m)\cap \mathbb D$, so $|a(z)|$ is uniformly bounded for $z\in \mathcal{O}(m)\cap \mathbb D$. Here and below, we use $\mathcal{O}(m)$ to denote some small neighbourhood of $m\in \mathcal{M}(H^\infty+C)$.

Applying the Bolzano-Weierstrass theorem, we can choose a constant $a$ with $|a|\leqslant 1$ such that $a$ is independent of $z$ and
$$H_{g}k_z=aH_{ug}k_z+\varepsilon(z).$$
Hence we have that
\begin{eqnarray*}
\lim\limits_{z\rightarrow m}\|H_{(1-au)g}k_z\|_2=0.
\end{eqnarray*}
Making a change of  variables  yields
$$\lim\limits_{z\rightarrow m}\big\|(I-P)\big[(1-au\circ\phi_z)(g\circ\phi_z)\big]\big\|_2=0.$$

Since $|a|\leqslant 1$ and $u$ is not a constant on $S_m$,  we have by \cite[Lemma 1]{GkZ2} that $(1-au)$ is an outer function on the support set $S_m$. For any
$\varepsilon > 0$, there exists a function $p\in H^\infty$ such that
$$\int_{S_m}\big|p(1-au)-1\big|^2d\mu_m<\varepsilon.$$
For this $\epsilon>0$, there exists a neighborhood $\mathcal{O}(m)$ of $m$ such that
$$\bigg|\int_{S_m}\big|p(1-au)-1\big|^2d\mu_m-\int_{S_m}\big|p(1-au)-1\big|^2\cdot\big|k_z\big|^2\frac{d\theta}{2\pi}\bigg|<\epsilon$$
for $z\in \mathcal{O}(m) \cap \mathbb D$.
Changing of variable gives
$$\int_{S_m}\big|p\circ\phi_z(1-au\circ\phi_z)-1\big|^2\frac{d\theta}{2\pi}<2\epsilon.$$
Applying the H\"{o}lder inequality, we obtain that
\begin{align*}
&\big\|(I-P)\big\{(g\circ \phi_z)\cdot [p\circ \phi_z(1-au\circ\phi_z)-1]\big\}\big\|_{4/3} \\
& ~~~\leqslant C_1 \|g\circ \phi_z\|_4\cdot\|p\circ \phi_z(1-au\circ \phi_z)-1\|_2\leqslant C_1\|g\|_\infty\epsilon^{\frac{1}{2}}
\end{align*}
for some constant $C_1>0$. Combining the above inequality with the identity
$$(I-P)\big\{(g\circ \phi_z)(p\circ \phi_z)(1-au\circ \phi_z)\big\}=S_{p\circ \phi_z}H_{g\circ \phi_z}(1-au\circ \phi_z)$$
gives us
\begin{align*}
\|(I-P)(g\circ\phi_z)\|_{4/3} & \leqslant C_1 \|g\|_{\infty} \epsilon^{\frac{1}{2}}+\big\|(I-P)\big\{(g\circ \phi_z)(p\circ \phi_z)(1-au\circ \phi_z)\big\}\big\|_{4/3}\\
 & \leqslant C_1 \|g\|_{\infty} \epsilon^{\frac{1}{2}}+\|p\|_{\infty}\cdot \big\|(I-P)[(1-au\circ\phi_z)(g\circ\phi_z)]\big\|_2.
\end{align*}
Recalling that
$$\lim\limits_{z\rightarrow m}\big\|(I-P)\big[(1-au\circ\phi_z)(g\circ\phi_z)\big]\big\|_2=0,$$
we get $$\lim\limits_{z\rightarrow m}\|(I-P)(g\circ\phi_z)\|_{4/3}\leqslant C_1 \|g\|_\infty\epsilon^{\frac{1}{2}}.$$

Note that the projection $P$ is bounded on $L^4$, there exists an absolute constant $C>0$  such that
\begin{align*}
\left\|(I-P)(g\circ\phi_z)\right\|_{4} \leqslant C \|g\|_\infty.
\end{align*}
In addition, since
\begin{align*}
\left\|(I-P)(g\circ\phi_z)\right\|_2^2 & \leqslant \left\|(I-P)(g\circ\phi_z)\right\|_{4/3} \cdot
\left\|(I-P)(g\circ\phi_z)\right\|_{4},
\end{align*}
it follows  that $$\lim\limits_{z\rightarrow m}\|H_{g}k_z\|_2=\lim\limits_{z\rightarrow m}\|(I-P)(g\circ\phi_z)\|_2=0.$$
Thus we conclude by Lemma \ref{2e} that $g|_{S_m}\in H^\infty|_{S_m}$, which contradicts our assumption.

\textbf{Case 3.} Suppose that $\ \mathrm{dim \ (span} \{[f|_{S_m}],\ [g|_{S_m}]\})=2$. In this case, we need to further consider the dimension of
$\ \mathrm{span} \{[\overline{f}|_{S_m}],\ [\overline{g}|_{S_m}]\}$.

\textbf{Subcase 3(i).} \ If $\ \mathrm{dim \ (span} \{[\overline{f}|_{S_m}],\ [\overline{g}|_{S_m}]\})=0$, then we have $\overline{f}|_{S_m},\ \overline{g}|_{S_m}\in H^{\infty}|_{S_m}$.

\textbf{Subcase 3(ii).} \ Suppose that $\ \mathrm{dim \ (span} \{[\overline{f}|_{S_m}],\ [\overline{g}|_{S_m}]\})=1$. Without loss of generality,
we may assume that $\left[\overline{g}|_{S_m}\right]\neq 0$ and $\left[\overline{f}|_{S_m}\right]=d\left[\overline{g}|_{S_m}\right]$ for some constant  $d$.
Then $\left(\overline{f}-d\overline{g}\right)|_{S_m}\in H^{\infty}|_{S_m}$, we have by Lemma \ref{2e}  that
$$H_{\overline{f}}k_z=dH_{\overline{g}}k_z+\varepsilon(z)\ \ \ \ \mathrm{and}\ \ \ \ H_{u\overline{f}}k_z=dH_{u\overline{g}}k_z+\varepsilon(z),$$
where the second equation follows from that $H_{u\varphi}=S_uH_{\varphi}$ for all $\varphi \in L^\infty$.  Thus we can rewrite (\ref{2.a}) as follows:
$$H_{\overline{g}}k_z\otimes H_{f-\overline{d}g}k_z=H_{u\overline{g}}k_z\otimes H_{u(f-\overline{d}g)}k_z+\varepsilon(z).$$
Using the same arguments as the one in Case 2, we conclude that $\left(f-\overline{d}g\right)|_{S_m}\in H^{\infty}|_{S_m}$.
So we have that $\left(f-\overline{d}g\right)|_{S_m}$ is a constant, as desired.

\textbf{Subcase 3(iii).} Finally, we consider the case that $\ \mathrm{dim \ (span} \{[\overline{f}|_{S_m}],\ [\overline{g}|_{S_m}]\})=2$. In this subcase,
$\varliminf\limits_{z\rightarrow m}\|H_{f}k_z\|_2$, $\varliminf\limits_{z\rightarrow m}\|H_{g}k_z\|_2$,
$\varliminf\limits_{z\rightarrow m}\|H_{\overline{f}}k_z\|_2$ and $\varliminf\limits_{z\rightarrow m}\|H_{\overline{g}}k_z\|_2$ are all positive.
By (\ref{2.a}), we have
\begin{equation}\label{2.aa}
\begin{array}{l}
\langle H_{u\overline{g}}k_z,H_{\overline{g}}k_z\rangle H_fk_z-\langle H_{u\overline{g}}k_z,H_{\overline{f}}k_z\rangle H_gk_z \vspace{2mm}\\
=\langle H_{u\overline{g}}k_z,H_{u\overline{g}}k_z\rangle H_{uf}k_z-\langle H_{u\overline{g}}k_z,H_{u\overline{f}}k_z\rangle H_{ug}k_z+\varepsilon(z)
\end{array}
\end{equation}
and
\begin{equation}\label{2.ab}
\begin{array}{l}
\langle H_{u\overline{f}}k_z,H_{\overline{g}}k_z\rangle H_fk_z-\langle H_{u\overline{f}}k_z,H_{\overline{f}}k_z\rangle H_gk_z \vspace{2mm}\\
=\langle H_{u\overline{f}}k_z,H_{u\overline{g}}k_z\rangle H_{uf}k_z-\langle H_{u\overline{f}}k_z,H_{u\overline{f}}k_z\rangle H_{ug}k_z+\varepsilon(z).
\end{array}
\end{equation}

In order to complete the discussion of Subcase 3(iii), the following claim is required.
\begin{cla}\label{2aa}
$\varliminf\limits_{z\rightarrow m}\left(\|H_{u\overline{f}}k_z\|_2^2\cdot\|H_{u\overline{g}}k_z\|_2^2-|\langle H_{u\overline{f}}k_z,H_{u\overline{g}}k_z\rangle|^2\right)=\delta>0$ for some $\delta$.
\end{cla}

As the proof of the above claim is long, let us assume this for the moment and we will give its proof later.

Based on Claim \ref{2aa}, we have by (\ref{2.aa}) and (\ref{2.ab}) that there are  $a_{11}(z)$, $a_{12}(z)$, $a_{21}(z)$ and $a_{22}(z)$ such that
\begin{equation} \label{2.e}
\begin{cases}
H_{uf}k_z = a_{11}(z)H_{f}k_z+a_{12}(z)H_{g}k_z+\varepsilon(z),\vspace{2mm}\\
H_{ug}k_z = a_{21}(z)H_{f}k_z+a_{22}(z)H_{g}k_z+\varepsilon(z),\vspace{2mm}\\
\end{cases}
\end{equation}
where $z\in\mathcal{O}(m)\cap \mathbb D$. Furthermore,  observe that the functions $\left\{a_{ij}(z)\right\}_{i,j=1}^2$ are all uniformly bounded for $z\in\mathcal{O}(m)\cap \mathbb D$.

Using the Bolzano-Weierstrass theorem again, there exist constants  $\big\{a_{11},a_{12},a_{21},a_{22}\big\}$ which are independent of $z$  such that for $z\in \mathcal{O}(m)\cap \mathbb D$:
\begin{equation} \label{2.i}
\begin{cases}
H_{uf}k_z = a_{11}H_{f}k_z+a_{12}H_{g}k_z+\varepsilon(z),\vspace{2mm}\\
H_{ug}k_z = a_{21}H_{f}k_z+a_{22}H_{g}k_z+\varepsilon(z).\vspace{2mm}\\
\end{cases}
\end{equation}

Without loss of generality, we may assume that the coefficient matrix of (\ref{2.i}) has the following form:
\begin{gather*}
\begin{pmatrix} \lambda_1 & 1 \\ 0 & \lambda_1\end{pmatrix}
\ \ \ \ \  \mathrm{or}\ \ \ \ \   \begin{pmatrix} \lambda_1 & 0 \\ 0 & \lambda_2\end{pmatrix},
\end{gather*}
where the above two matrices are the Jordan canonical forms for $(a_{ij})$.   In fact, there is  an invertible matrix
\begin{gather*}
B=\begin{pmatrix}
b_{11} & b_{12}\\
b_{21} & b_{22}
\end{pmatrix}
\end{gather*}
such that
\begin{align*}
B\binom{H_{uf}k_z} {H_{ug}k_z}=\begin{pmatrix} \lambda_1 & 0\\
0 & \lambda_2\end{pmatrix}B \binom{H_{f}k_z}{H_{g}k_z}+\varepsilon(z)
\end{align*}
or
\begin{align*}
B\binom{H_{uf}k_z} {H_{ug}k_z}=\begin{pmatrix} \lambda_1 & 1\\
0 & \lambda_1\end{pmatrix}B \binom{H_{f}k_z}{H_{g}k_z}+\varepsilon(z).
\end{align*}
This gives that
\begin{align*}
\binom{H_{u(b_{11}f+b_{12}g)}k_z} {H_{u(b_{21}f+b_{22}g)}k_z}=\begin{pmatrix} \lambda_1 & 0\\
0 & \lambda_2\end{pmatrix} \binom{H_{(b_{11}f+b_{12}g)}k_z}{H_{(b_{21}f+b_{22}g)}k_z}+\varepsilon(z)
\end{align*}
or
\begin{align*}
\binom{H_{u(b_{11}f+b_{12}g)}k_z} {H_{u(b_{21}f+b_{22}g)}k_z}=\begin{pmatrix} \lambda_1 & 1\\
0 & \lambda_1\end{pmatrix} \binom{H_{(b_{11}f+b_{12}g)}k_z}{H_{(b_{21}f+b_{22}g)}k_z}+\varepsilon(z).
\end{align*}
Now define $F=b_{11}f+b_{12}g$  and $G=b_{21}f+b_{22}g$. Then  we have  that $\overline{f}|_{S_m},\ \overline{g}|_{S_m}\in H^\infty|_{S_m}$ if and only if $\overline{F}|_{S_m},\ \overline{G}|_{S_m}\in H^\infty|_{S_m}$, since the matrix $(b_{ij})$ is invertible.

If the above coefficient matrix  for   (\ref{2.i}) is
\begin{gather*}
\begin{pmatrix} \lambda_1 & 0 \\ 0 & \lambda_2\end{pmatrix},
\end{gather*}
then we have
\begin{eqnarray*}
\begin{cases}
H_{uf}k_z=\lambda_1 H_{f}k_z+\varepsilon(z),\vspace{2mm}\\
H_{ug}k_z=\lambda_2 H_{g}k_z+\varepsilon(z).\vspace{2mm}\\
\end{cases}
\end{eqnarray*}
Solving the above system gives
 $$|\lambda_1|=\frac{|\langle S_uH_{f}k_z,H_{f}k_z\rangle|}{\|H_{f}k_z\|_2^2}+\varepsilon(z)$$
 and
$$|\lambda_2|=\frac{|\langle S_uH_{g}k_z,H_{g}k_z\rangle|}{\|H_{g}k_z\|_2^2}+\varepsilon(z)$$ for all  $z\in \mathcal{O}(m)\cap \mathbb D$.
Since $u$ is an inner function, we conclude that $|\lambda_1|\leqslant 1$ and $|\lambda_2|\leqslant 1$. Thus we have
\begin{eqnarray*}
H_{\overline{g}}k_z\otimes H_{f}k_z- H_{\overline{f}}k_z\otimes H_{g}k_z=H_{u\overline{g}}k_z\otimes \lambda_1H_{f}k_z- H_{u\overline{f}}k_z\otimes \lambda_2H_{g}k_z+\varepsilon(z),
\end{eqnarray*}
to obtain
$$H_{(1-\overline{\lambda}_1u)\overline{g}}k_z\otimes H_{f}k_z=H_{(1-\overline{\lambda}_2u)\overline{f}}k_z\otimes H_{g}k_z+\varepsilon(z)$$
for $z\in \mathcal{O}(m)\cap \mathbb D$.
This gives that
\begin{eqnarray}\label{2.ai}
\begin{cases}
\langle H_{f}k_z,H_{f}k_z\rangle H_{(1-\overline{\lambda}_1u)\overline{g}}k_z=\langle H_{f}k_z,H_{g}k_z\rangle H_{(1-\overline{\lambda}_2u)\overline{f}}k_z +\varepsilon(z),\vspace{2mm}\\
\langle H_{g}k_z,H_{f}k_z\rangle H_{(1-\overline{\lambda}_1u)\overline{g}}k_z=\langle H_{g}k_z,H_{g}k_z\rangle H_{(1-\overline{\lambda}_2u)\overline{f}}k_z +\varepsilon(z),
\end{cases}
\end{eqnarray}
where $z\in \mathcal{O}(m)\cap \mathbb D$.

Since $\left[f|_{S_m}\right]$ and $\left[g|_{S_m}\right]$ are linearly independent,
we first show that
\begin{align}\label{C-Z}
\varliminf\limits_{z\rightarrow m}\left(\|H_{f}k_z\|_2^2\cdot\|H_{g}k_z\|_2^2-|\langle H_{f}k_z,H_{g}k_z\rangle|^2\right)=\mu>0.
\end{align}
Otherwise, there is a net $\{z_{\beta}\}\subset \mathbb D$ such that
$$\lim\limits_{z_{\beta}\rightarrow m}\left(\|H_{f}k_{z_\beta}\|_2^2\cdot\|H_{g}k_{z_\beta}\|_2^2-|\langle H_{f}k_{z_\beta},H_{g}k_{z_\beta}\rangle|^2\right)=0.$$
For $z\in\mathcal{O}(m)\cap \mathbb D$, we let $$\lambda_{z}=\frac{\langle H_{f}k_{z},H_{g}k_{z}\rangle}{\|H_{g}k_{z}\|_2^2}.$$ Clearly, $\lambda_{z}$ is
uniformly bounded for $z\in\mathcal{O}(m)\cap \mathbb D$, since $\varliminf\limits_{z\rightarrow m}\|H_{g}k_{z}\|_2>0.$ Then
$$\left\|H_{f}k_z-\lambda_zH_{g}k_z\right\|_2^2=\frac{\|H_{f}k_z\|_2^2\cdot \|H_{g}k_z\|_2^2-|\langle H_{f}k_z,H_{g}k_z\rangle|^2}{\|H_{g}k_z\|_2^2}$$ for each $z$ in the neighbourhood $\mathcal{O}(m)\cap \mathbb D$. On the other hand, we can choose a subnet $\{z_{\beta,\gamma}\}$ of $\{z_\beta\}$ such that
$\lim\limits_{z_{\beta,\gamma}\rightarrow m}\lambda_{z_{\beta,\gamma}}=\lambda$ for some $\lambda$, and we also have
$$\lim\limits_{z_{\beta,\gamma}\rightarrow m}\left\|H_{f}k_{z_{\beta,\gamma}}-\lambda H_{g}k_{z_{\beta,\gamma}}\right\|_2=0.$$
Now Lemma \ref{2e} gives  $$\lim\limits_{z\rightarrow m}\left\|H_{f}k_{z}-\lambda H_{g}k_{z}\right\|_2=0,$$
to obtain that $\left(f-\lambda g\right)|_{S_m}\in H^{\infty}|_{S_m}$, which is impossible since our assumption is
$$\mathrm{dim \ (span} \{[f|_{S_m}],\ [g|_{S_m}]\})=2.$$ The contradiction implies that $\mu>0$.

By  (\ref{2.ai}), we have
$$\left(\|H_{f}k_z\|_2^2\cdot\|H_{g}k_z\|_2^2-|\langle H_{f}k_z,H_{g}k_z\rangle|^2\right)H_{(1-\overline{\lambda}_1u)}k_z=\varepsilon(z)$$
and
$$\left(\|H_{f}k_z\|_2^2\cdot\|H_{g}k_z\|_2^2-|\langle H_{f}k_z,H_{g}k_z\rangle|^2\right)H_{(1-\overline{\lambda}_2u)}k_z+\varepsilon(z)=0.$$
Thus we conclude by (\ref{C-Z}) that
$$\lim_{z\rightarrow m}\|H_{(1-\overline{\lambda}_1u)\overline{g}}k_z\|_2=0 \ \ \ \ \ \ \mathrm{and}
 \ \ \ \ \ \ \lim_{z\rightarrow m}\|H_{(1-\overline{\lambda}_2u)\overline{f}}k_z\|_2=0.$$
Repeating the same arguments as used in Case 2, we have $\overline{f}|_{S_m},\ \overline{g}|_{S_m}\in H^\infty|_{S_m}$, which is a contradiction.

In order to finish the proof, it remains to consider the case that the coefficient matrix of (\ref{2.i}) is
\begin{gather*}
\begin{pmatrix} \lambda_1 & 1 \\ 0 & \lambda_1\end{pmatrix}.
\end{gather*}
In this case, we have that for $z\in \mathcal{O}(m)\cap \mathbb D$:
\begin{eqnarray*}
\begin{cases}
H_{uf}k_z=\lambda_1 H_{f}k_z+H_{g}k_z+\varepsilon(z),\vspace{2mm}\\
H_{ug}k_z=\lambda_1 H_{g}k_z+\varepsilon(z).\vspace{2mm}\\
\end{cases}
\end{eqnarray*}
Using the same arguments as the above, we also have $|\lambda_1|\leqslant  1$ and
\begin{align*}
&H_{\overline{g}}k_z\otimes H_{f}k_z- H_{\overline{f}}k_z\otimes H_{g}k_z\\
&=H_{u\overline{g}}k_z\otimes (\lambda_1H_{f}k_z+H_{g}k_z)- H_{u\overline{f}}k_z\otimes \lambda_1H_{g}k_z+\varepsilon(z),
\end{align*}
which is equivalent to
$$H_{(1-\overline{\lambda}_1u)\overline{g}}k_z\otimes H_{f}k_z=H_{u\overline{g}+(1-\overline{\lambda}_1u)\overline{f}}k_z\otimes H_{g}k_z+\varepsilon(z).$$

Since $\left[f|_{S_m}\right]$ and $\left[g|_{S_m}\right]$ are linearly independent, we deduce that $H_{f}k_z$ and $H_{g}k_z$ are linearly independent
for all  $z\in \mathcal{O}(m)\cap \mathbb D$ and moreover,
\begin{equation} \label{2.j}
\begin{cases}
\big((1-\overline{\lambda}_1u)\overline{g}\big)|_{S_m}\in H^\infty|_{S_m},\vspace{2mm}\\
\big((1-\overline{\lambda}_1u)\overline{f}+(u\overline{g})\big)|_{S_m}\in H^\infty|_{S_m}.
\end{cases}
\end{equation}
Thus we obtain $$\overline{f}|_{S_m},\ \overline{g}|_{S_m}\in H^\infty|_{S_m}.$$
This contradicts our assumption that $\ \mathrm{dim \ (span} \{[\overline{f}|_{S_m}],\ [\overline{g}|_{S_m}]\})=2$.

To complete the whole proof of Proposition \ref{2c}, we need to show the following  result holds under the assumption that
$\varliminf\limits_{z\rightarrow m}\|H_{f}k_z\|_2$, $\varliminf\limits_{z\rightarrow m}\|H_{g}k_z\|_2$,
$\varliminf\limits_{z\rightarrow m}\|H_{\overline{f}}k_z\|_2$ and $\varliminf\limits_{z\rightarrow m}\|H_{\overline{g}}k_z\|_2$ are all positive:
$$\varliminf\limits_{z\rightarrow m}\left(\|H_{u\overline{f}}k_z\|_2^2\cdot\|H_{u\overline{g}}k_z\|_2^2-|\langle H_{u\overline{f}}k_z,H_{u\overline{g}}k_z\rangle|^2\right)=\delta>0.$$
\emph{\textbf{Proof of Claim \ref{2aa}.}} By the Cauchy-Schwarz inequality, we have
$$\varliminf\limits_{z\rightarrow m}\left(\|H_{u\overline{f}}k_z\|_2^2\cdot\|H_{u\overline{g}}k_z\|_2^2-|\langle H_{u\overline{f}}k_z,H_{u\overline{g}}k_z\rangle|^2\right)\geqslant 0.$$
If the above conclusion does not hold, we can find a net $\left\{z_\alpha\right\}\subset \D$ such that
$z_{\alpha}\rightarrow m$ and
$$\lim\limits_{z_{\alpha}\rightarrow m}\left(\|H_{u\overline{f}}k_{z_\alpha}\|_2^2\cdot\|H_{u\overline{g}}k_{z_\alpha}\|_2^2-|\langle H_{u\overline{f}}k_{z_\alpha},H_{u\overline{g}}k_{z_\alpha}\rangle|^2\right)=0.$$
We first show that $\varliminf\limits_{z\rightarrow m}\|H_{u\overline{g}}k_z\|_2>0$. If this is not the case, Lemma \ref{2e} gives
$$\lim\limits_{z\rightarrow m}\|H_{u\overline{g}}k_z\|_2=0.$$
Thus we can rewrite (\ref{2.a}) as follows:
\begin{equation}\label{2.af}
\begin{array}{l}
H_{\overline{f}}k_z\otimes H_gk_z-H_{\overline{g}}k_z\otimes H_fk_z
=H_{u\overline{f}}k_z\otimes H_{ug}k_z+\varepsilon(z).
\end{array}
\end{equation}
This implies that
\begin{equation}\label{2.ad}
\begin{array}{l}
\langle H_{\overline{g}}k_z,H_{\overline{g}}k_z\rangle H_fk_z-\langle H_{\overline{g}}k_z,H_{\overline{f}}k_z\rangle H_gk_z \vspace{2mm}\\
=-\langle H_{\overline{g}}k_z,H_{u\overline{f}}k_z\rangle H_{ug}k_z+\varepsilon(z)
\end{array}
\end{equation}
and
\begin{equation}\label{2.ae}
\begin{array}{l}
\langle H_{\overline{f}}k_z,H_{\overline{g}}k_z\rangle H_fk_z-\langle H_{\overline{f}}k_z,H_{\overline{f}}k_z\rangle H_gk_z \vspace{2mm}\\
=-\langle H_{\overline{f}}k_z,H_{u\overline{f}}k_z\rangle H_{ug}k_z+\varepsilon(z).
\end{array}
\end{equation}
Using the method as the one in the proof of (\ref{C-Z}), we obtain
\begin{align*}\label{2.ag}
\varliminf\limits_{z\rightarrow m}\left(\|H_{\overline{f}}k_z\|_2^2\cdot\|H_{\overline{g}}k_z\|_2^2-|\langle H_{\overline{f}}k_z,H_{\overline{g}}k_z\rangle|^2\right)>0,
\end{align*}
since $\left[\overline{f}|_{S_m}\right]$ and $\left[\overline{g}|_{S_m}\right]$ are also linearly independent. Therefore, we have by (\ref{2.ad}) and (\ref{2.ae}) that there exists $b(z)$ such that $$H_{g}k_z=b(z)H_{ug}k_z+\varepsilon(z)$$ for all $z\in\mathcal{O}(m)\cap \mathbb D$. Moreover,
$b(z)$ is uniformly bounded for $z\in\mathcal{O}(m)\cap \mathbb D$.
Thus we can choose a net $\{z_\zeta\}$ such that $\lim\limits_{z_\zeta\rightarrow m}b(z_\zeta)=b$. Using Lemma \ref{2e} again, we obtain that
\begin{equation}\label{H}
H_{g}k_z=bH_{ug}k_z+\varepsilon(z)
\end{equation}
for all $z\in\mathcal{O}(m)\cap \mathbb D$. As $\varliminf\limits_{z\rightarrow m}\|H_{g}k_z\|_2>0$ and
$$\|H_{g}k_z\|_2=\|bH_{ug}k_z+\varepsilon(z)\|_2\leqslant |b|\cdot\|H_{g}k_z\|_2+\|\varepsilon(z)\|_2,$$
we conclude that $|b|\geqslant 1$.

Using (\ref{2.af}), we have
$$H_{\overline{g}}k_z\otimes H_fk_z
=H_{(\overline{b}-u)\overline{f}}k_z\otimes H_{ug}k_z+\varepsilon(z)$$
and
$$H_{f}k_z=c(z)H_{ug}k_z+\varepsilon(z),$$
where $$c(z)=\frac{\langle H_{\overline{g}}k_z,H_{(\overline{b}-u)\overline{f}}k_z\rangle}{\|H_{\overline{g}}k_z\|_2^2}$$ is uniformly
bounded for $z\in\mathcal{O}(m)\cap \mathbb D$. So there is  a constant $c$ (which is independent of $z$) such that
\begin{equation*}
H_{f}k_z=cH_{ug}k_z+\varepsilon(z).
\end{equation*}
As we have shown
$$H_{g}k_z=bH_{ug}k_z+\varepsilon(z)$$ for all $z\in\mathcal{O}(m)\cap \mathbb D$, it follows that
 $$H_fk_z=\frac{c}{b}H_gk_z+\varepsilon(z).$$  This implies
$\left(f-\frac{c}{b}g\right)|_{S_m}\in H^\infty|_{S_m}$. But this contradicts  our assumption that $$\mathrm{dim \ (span} \{[f|_{S_m}],\ [g|_{S_m}]\})=2.$$
So we have $\varliminf\limits_{z\rightarrow m}\|H_{u\overline{g}}k_z\|_2>0$.

Recall that our assumption is
$$\varliminf\limits_{z\rightarrow m}\left(\|H_{u\overline{f}}k_z\|_2^2\cdot\|H_{u\overline{g}}k_z\|_2^2-|\langle H_{u\overline{f}}k_z,H_{u\overline{g}}k_z\rangle|^2\right)=0.$$
Using the same method  as the one in the proof of (\ref{C-Z}), there exists a constant $\lambda'$ such that
$$\lim\limits_{z\rightarrow m} \|H_{u\overline{f}}k_z-\lambda'H_{u\overline{g}}k_z\|_2=0.$$
Combining the above limit with  (\ref{2.a}) gives that
\begin{equation*}
\begin{array}{l}
H_{\overline{g}}k_z\otimes H_fk_z-H_{\overline{f}}k_z\otimes H_gk_z
=H_{u\overline{g}}k_z\otimes H_{u(f-\overline{\lambda'}g)}k_z+\varepsilon(z).
\end{array}
\end{equation*}
Rewrite the above formula as the following:
\begin{align}\label{2.ah}
H_{\overline{g}}k_z\otimes H_{(f-\overline{\lambda'}g)}k_z-H_{(\overline{f}-\lambda'\overline{g})}k_z\otimes H_gk_z
=H_{u\overline{g}}k_z\otimes H_{u(f-\overline{\lambda'}g)}k_z+\varepsilon(z).
\end{align}
Since
$$ \mathrm{dim \ (span} \{[f|_{S_m}],\ [g|_{S_m}]\})= \mathrm{dim \ (span} \{[\overline{f}|_{S_m}],\ [\overline{g}|_{S_m}]\})=2,$$
we have
\begin{align*}
 \mathrm{dim \ \big(span} \big\{[(f-\overline{\lambda'}g)|_{S_m}],\ [g|_{S_m}]\big\}\big)&=\mathrm{dim \ \big(span} \big\{[(\overline{f}-\lambda'\overline{g})|_{S_m}],\ [\overline{g}|_{S_m}]\big\}\big)=2.
\end{align*}

Comparing (\ref{2.ah}) with (\ref{2.af}) and  then repeating  the same arguments as used in (\ref{H}), we have
$$H_{(f-\overline{\lambda'}g)}k_z=b'H_{u(f-\overline{\lambda'}g)}k_z+\varepsilon(z)\ \ \ \  \ \mathrm{and} \ \ \ \ \ H_gk_z=c'H_{u(f-\overline{\lambda'} g)}k_z+\varepsilon(z),$$
where $b',\ c'$ are independent of $z$ and moreover,  $|b'|\geqslant  1$ and $c'\neq 0$, since $\left[g|_{S_m}\right]\neq 0$.
Thus we have $$\lim\limits_{z\rightarrow m}\Big\|H_{\left(g-\frac{c'}{b'}(f-\overline{\lambda'}g)\right)}k_z\Big\|_2=0.$$  This yields that
$$\Big(g-\frac{c'}{b'}(f-\overline{\lambda'}g)\Big)\Big|_{S_m}\in H^\infty|_{S_m}.$$  But it is a contradiction, since $ \mathrm{dim \ (span} \{[f|_{S_m}],\ [g|_{S_m}]\})=2$. This completes the proof of Claim \ref{2aa} and hence the proof of Proposition \ref{2c}.
\end{proof}

Combining the preceding proposition with  the two  relations in Remark \ref{rem1},
we obtain the following proposition which gives a necessary condition for the compactness of the fourth operator $H_{uf}H_{u\overline{g}}^*+S_fS_g-H_{ug}H_{u\overline{f}}^*-S_gS_f$ in Lemma \ref{1b}.
\begin{prop}\label{2f}
Let $u$ be a nonconstant inner function, $f,g\in L^\infty$ and $m\in \mathcal M(H^\infty + C).$ Suppose that the operator $$H_{uf}H_{u\overline{g}}^*+S_fS_g-H_{ug}H_{u\overline{f}}^*-S_gS_f$$ is compact. Then for the support set $S_m$ of $m$, one of the following conditions holds:\\
  $(1)$ both $f|_{S_m}$ and $g|_{S_m}$ are in $H^\infty|_{S_m}$;\\
  $(2)$ both $\overline{f}|_{S_m}$ and $\overline{g}|_{S_m}$ are in $H^\infty|_{S_m}$;\\
  $(3)$ there exist constants $a$, $b$, not both zero, such that $(af+bg)|_{S_m}$ is a constant.
\end{prop}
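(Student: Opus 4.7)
The plan is to reduce Proposition \ref{2f} to the already-proved Proposition \ref{2c} by conjugating with the anti-unitary involution $V$ from Lemma \ref{2I}. First, I would upgrade the two intertwining relations in Remark \ref{rem1} to the conjugation identities
$$VH_\varphi V = H_\varphi^{*}, \qquad VH_\varphi^{*} V = H_\varphi, \qquad VS_\varphi V = T_{\overline{\varphi}},$$
which follow immediately from $V^{2}=I$ (for the third identity, multiply $S_\varphi V = V T_{\overline{\varphi}}$ on the right by $V$; the second is obtained by taking the adjoint of the first and using $V^{*}=V$). Since $V$ is anti-unitary, it preserves norms and sends weak-null nets to weak-null nets, so for any bounded linear operator $T$ the linear operator $VTV$ is compact if and only if $T$ is compact.

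Second, using that $V$ is an involution so $V(AB)V = (VAV)(VBV)$ for any bounded linear operators $A,B$, I would compute
\begin{align*}
V\bigl(H_{uf}H_{u\overline{g}}^{*}+S_fS_g-H_{ug}H_{u\overline{f}}^{*}-S_gS_f\bigr)V
&= H_{uf}^{*}H_{u\overline{g}} + T_{\overline{f}}T_{\overline{g}} - H_{ug}^{*}H_{u\overline{f}} - T_{\overline{g}}T_{\overline{f}}.
\end{align*}
A direct inspection shows that the right-hand side coincides with the operator in Proposition \ref{2c}, but with the symbol pair $(f,g)$ replaced by $(\overline{f},\overline{g})$; indeed, rewriting $H_{uf}^{*}=H_{u\overline{(\overline{f})}}^{*}$ and $H_{ug}^{*}=H_{u\overline{(\overline{g})}}^{*}$ makes the match literal.

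Applying Proposition \ref{2c} to the pair $(\overline{f},\overline{g})$ then yields that, for the support set $S_m$, one of the following holds: $\overline{f}|_{S_m},\overline{g}|_{S_m}\in H^\infty|_{S_m}$; or $f|_{S_m},g|_{S_m}\in H^\infty|_{S_m}$; or $(a\overline{f}+b\overline{g})|_{S_m}$ is constant for some constants $a,b$, not both zero. Taking complex conjugates in the last alternative gives $(\overline{a}f+\overline{b}g)|_{S_m}$ constant with $(\overline{a},\overline{b})\ne(0,0)$, so these three possibilities are exactly conditions (2), (1), and (3) of Proposition \ref{2f}, respectively. There is no substantive obstacle in this argument; it is purely a symmetry reduction, and the only care required is in the bookkeeping of complex conjugates when the roles of $f,g$ are swapped with $\overline{f},\overline{g}$.
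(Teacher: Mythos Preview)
Your argument is correct and is exactly the approach the paper takes: the paper does not give a detailed proof of Proposition \ref{2f} but simply states that it follows by combining Proposition \ref{2c} with the relations in Remark \ref{rem1}, and your computation of $V(\cdot)V$ together with the substitution $(f,g)\mapsto(\overline{f},\overline{g})$ is precisely what is meant (the same conjugation identity appears verbatim as (\ref{3.a}) in the sufficiency proof). Your bookkeeping of the three alternatives is correct.
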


Next, we will obtain a necessary condition for the  compactness of the second operator $T_fH_{u\overline{g}}^*+H_{u\overline{f}}^*S_g-T_gH_{u\overline{f}}^*-H_{u\overline{g}}^*S_f$ in Lemma \ref{1b}.
To do so, we need the following two lemmas.
\begin{lem}\label{2g}
Let $f$ and $g$ be in $L^2$. Then
$$H_fT_gT_{\phi_z}-S_{\phi_z}H_fT_g=H_fk_z\otimes T_{\overline{g\phi_z}}k_z=-(H_{f}k_z)\otimes(VH_{g}k_z)$$
for all $z\in \mathbb D$.
\end{lem}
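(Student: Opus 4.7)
The plan is to transform $H_fT_gT_{\phi_z}-S_{\phi_z}H_fT_g$ into a rank-one operator by invoking the standard composition identities for Hankel, Toeplitz, and dual Toeplitz operators, then match the two rank-one forms on the right by two independent calculations: an algebraic factorization of $H_{\overline{\phi_z}}^*$ through $k_z$, and a pointwise boundary identity relating $\phi_z$, $k_z$ and the flip $V$.

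First I would reduce the difference to $H_fH_{\overline{\phi_z}}^*H_g$. Since $\phi_z\in H^\infty$, we have $H_{\phi_z}=0$, and the Hankel composition rule $H_{ab}=H_aT_b+S_aH_b$ yields $T_gT_{\phi_z}=T_{g\phi_z}$, $H_gT_{\phi_z}=H_{g\phi_z}=S_{\phi_z}H_g$, and $S_{\phi_z}H_{fg}=H_{\phi_z fg}$. Applying these together with $H_fT_g=H_{fg}-S_fH_g$, I obtain
\[
H_fT_gT_{\phi_z} = H_{fg\phi_z}-S_fH_gT_{\phi_z},\qquad S_{\phi_z}H_fT_g = H_{fg\phi_z}-S_{\phi_z}S_fH_g,
\]
so subtracting gives $H_fT_gT_{\phi_z}-S_{\phi_z}H_fT_g = S_{\phi_z}S_fH_g - S_fH_gT_{\phi_z}$. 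The dual Toeplitz composition $S_aS_b=S_{ab}-H_aH_{\bar b}^*$ combined with $H_{\phi_z}=0$ gives $S_{\phi_z}S_f=S_{\phi_z f}=S_fS_{\phi_z}+H_fH_{\overline{\phi_z}}^*$, and $S_{\phi_z}H_g=H_gT_{\phi_z}$ then collapses the remaining terms to $H_fH_{\overline{\phi_z}}^*H_g$.

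Next I would show $H_{\overline{\phi_z}}^*H_g = k_z\otimes T_{\overline{g\phi_z}}k_z$. Because $\phi_z$ is a degree-one Blaschke factor, $K_{\phi_z}^2=H^2\ominus\phi_zH^2$ is one-dimensional with unit generator $k_z$, so $H_{\overline{\phi_z}}$ has rank one and factors as $(H_{\overline{\phi_z}}k_z)\otimes k_z$. Taking adjoints and composing with $H_g$ on the right gives $H_{\overline{\phi_z}}^*H_g = k_z\otimes H_g^*H_{\overline{\phi_z}}k_z$. A short computation on $\partial\mathbb D$ yields $\overline{\phi_z(w)}k_z(w) = -\sqrt{1-|z|^2}/(w-z)$, whence $H_g^*H_{\overline{\phi_z}}k_z = P(\bar g\cdot\overline{\phi_z}k_z) = P(\overline{g\phi_z}k_z) = T_{\overline{g\phi_z}}k_z$. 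Composing on the left with $H_f$ delivers the first equality $H_fT_gT_{\phi_z}-S_{\phi_z}H_fT_g = H_fk_z\otimes T_{\overline{g\phi_z}}k_z$.

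For the second equality I would verify the pointwise identity
\[
\overline{\phi_z(w)}\,k_z(w) + \overline{w}\,\overline{k_z(w)} = 0 \quad \text{on}\ \partial\mathbb D,
\]
which follows after placing both terms over the common denominator $(1-\overline{z}w)(1-z\overline{w})$: the numerator reduces to $\overline{z}(1-|w|^2)$, vanishing on the circle. Consequently $V(gk_z)(w) = \overline{g(w)}\,\overline{w}\,\overline{k_z(w)} = -\overline{g(w)\phi_z(w)}\,k_z(w)$, and Lemma \ref{2I} applied in the form $V(I-P)=PV$ gives
\[
VH_gk_z = V(I-P)(gk_z) = PV(gk_z) = -P(\overline{g\phi_z}k_z) = -T_{\overline{g\phi_z}}k_z.
\]
Since $-1$ is real, $H_fk_z\otimes T_{\overline{g\phi_z}}k_z = -(H_fk_z)\otimes(VH_gk_z)$, which completes the second equality. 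I expect the main obstacle to be the bookkeeping in the first step, where several products must be regrouped using three different composition identities and, after cancellations, only the ``double Hankel'' piece $H_fH_{\overline{\phi_z}}^*H_g$ survives to feed into the rank-one factorization.
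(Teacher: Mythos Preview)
Your argument is correct, but it follows a different path from the paper's. For the first equality the paper simply inserts the rank-one resolution of the identity $I=k_z\otimes k_z+T_{\phi_z}T_{\overline{\phi_z}}$ between $H_f$ and $T_gT_{\phi_z}$, and then invokes the single intertwining relation $S_{\phi_z}H_f=H_fT_{\phi_z}$; the rank-one piece drops out immediately as $H_fk_z\otimes T_{\overline{g\phi_z}}k_z$. Your route is more algebraic: you first manipulate several composition identities ($H_{ab}=H_aT_b+S_aH_b$, $S_aS_b=S_{ab}-H_aH_{\bar b}^*$) to reduce the difference to $H_fH_{\overline{\phi_z}}^*H_g$, and only then use the one-dimensionality of $K_{\phi_z}^2$ to factor $H_{\overline{\phi_z}}$. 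The two computations are in fact dual to each other, since $H_{\overline{\phi_z}}^*H_{\overline{\phi_z}}=I-T_{\phi_z}T_{\overline{\phi_z}}=k_z\otimes k_z$, but the paper's version is shorter and avoids the dual-Toeplitz semicommutator identity. For the second equality the paper computes $VT_{\overline{g\phi_z}}k_z$ directly and recognizes the result as $-H_gk_z$, whereas you compute $VH_gk_z$ via the boundary identity $\overline{\phi_z}k_z+\bar w\,\overline{k_z}=0$ and Lemma~\ref{2I}; these are the same calculation run in opposite directions. Your approach has the minor advantage of making the role of the model space $K_{\phi_z}^2$ explicit, while the paper's is more economical.
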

\begin{proof}
Using  the identity (see \cite[Page 480]{Zheng}): $$I=k_z\otimes k_z+T_{\phi_z}T_{\overline{\phi_z}},$$
we obtain
\begin{align*}
H_fT_gT_{\phi_z} &=H_{f}(k_z\otimes k_z+T_{\phi_z}T_{\overline{\phi_z}})T_{g}T_{\phi_z}\\
                 &=(H_{f}k_z\otimes k_z)T_{g\phi_z}+ H_{f}T_{\phi_z}T_{\overline{\phi_z}g\phi_z}\\
                 &=(H_{f}k_z)\otimes (T_{\overline{g}\overline{\phi_z}}k_z)+H_{f}T_{\phi_z}T_{g}.
\end{align*}
Using Identity (4.6) of \cite{StZ1}, we have  $$S_{\phi_z}H_f=H_fT_{\phi_z}.$$ It follows that
$$H_fT_gT_{\phi_z}-S_{\phi_z}H_fT_g=(H_fk_z)\otimes (T_{\overline{g}\overline{\phi_z}}k_z).$$

To obtain the last equality, we recall that $V^2=I$ and  observe that
\begin{align*}
VT_{\overline{g\phi_z}}k_z &=VP\left(\overline{g\phi_z}k_z\right)\\
                                  &=(I-P)V\left(\overline{g\phi_z}k_z\right)\\
                                  &=(I-P)\left(\overline{w}g(w)\phi_z(w)\overline{k_z(w)}\right)\\
                                  &=(I-P)\left(\overline{w}g(w)\frac{z-w}{1-\overline{z}w}\frac{\sqrt{1-|z|^2}}{1-z\overline{w}}\right)\\
                                  &=-(I-P)\left(g(w)\frac{\sqrt{1-|z|^2}}{1-\overline{z}w}\right)\\
                                  &=-H_{g}k_z,
\end{align*}
which gives the desired result.
\end{proof}

\begin{lem}\label{2h}
Let $K:H^2\rightarrow \overline{zH^2}$ be a compact operator. Then
$$\lim_{|z|\rightarrow 1^-}\|S_{\phi_z}K-KT_{\phi_z}\|=0.$$
\end{lem}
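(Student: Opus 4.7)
The plan is to reduce the assertion to a rank-one computation, exhibit a cancellation coming from the conjugate linearity of $\otimes$ in the second slot, and then upgrade the resulting directional convergence to convergence as $|z|\to 1^-$ by compactness of $\partial\mathbb D$.

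Since $\|T_{\phi_z}\|\leq 1$ and $\|S_{\phi_z}\|\leq 1$, and since finite-rank operators are norm dense in the ideal of compact operators from $H^2$ to $\overline{zH^2}$, a standard $3\varepsilon$-argument reduces the claim to the case of a rank-one $K=x\otimes y$ with $x\in\overline{zH^2}$ and $y\in H^2$: if $\|K-K_0\|<\varepsilon$ for finite-rank $K_0=\sum_{i=1}^n x_i\otimes y_i$, then $\|S_{\phi_z}K-KT_{\phi_z}\|\leq 2\varepsilon+\sum_i\|S_{\phi_z}(x_i\otimes y_i)-(x_i\otimes y_i)T_{\phi_z}\|$. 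For a single rank-one $K=x\otimes y$ a direct calculation yields
$$S_{\phi_z}(x\otimes y)-(x\otimes y)T_{\phi_z}=(S_{\phi_z}x)\otimes y-x\otimes(T_{\overline{\phi_z}}y).$$

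Fix any $\zeta\in\partial\mathbb D$. Because $\otimes$ is conjugate linear in its second slot we have $\zeta\,x\otimes y=x\otimes\overline{\zeta}\,y$, and adding and subtracting this term on the right-hand side above rewrites it as
$$(S_{\phi_z}x-\zeta x)\otimes y\;-\;x\otimes(T_{\overline{\phi_z}}y-\overline{\zeta}\,y).$$
As $z\to\zeta$ in $\mathbb D$, one has $\phi_z(w)\to\zeta$ for a.e.\ $w\in\partial\mathbb D$ while $|\phi_z|\equiv 1$; since $x\in\overline{zH^2}$ and $y\in H^2$, using $(I-P)(\zeta x)=\zeta x$ and $P(\overline{\zeta}\,y)=\overline{\zeta}\,y$, dominated convergence gives
$$\|S_{\phi_z}x-\zeta x\|_2\le\|(\phi_z-\zeta)x\|_2\to 0\quad\text{and}\quad\|T_{\overline{\phi_z}}y-\overline{\zeta}\,y\|_2\le\|(\overline{\phi_z}-\overline{\zeta})y\|_2\to 0.$$
Combined with the identity $\|a\otimes b\|=\|a\|_2\|b\|_2$ for rank-one tensors, this forces $\|S_{\phi_z}K-K T_{\phi_z}\|\to 0$ as $z\to\zeta$ radially or otherwise inside $\mathbb D$.

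To upgrade this to $|z|\to 1^-$ without fixing $\zeta$, suppose for contradiction that there existed $\varepsilon>0$ and a sequence $\{z_n\}\subset\mathbb D$ with $|z_n|\to 1$ and $\|S_{\phi_{z_n}}K-K T_{\phi_{z_n}}\|\ge\varepsilon$; compactness of $\overline{\mathbb D}$ then produces a subsequence $z_{n_k}\to\zeta_0\in\partial\mathbb D$, contradicting the previous paragraph. The principal obstacle in the proof is spotting the conjugate-linear cancellation $\zeta\,x\otimes y=x\otimes\overline{\zeta}\,y$: individually neither $S_{\phi_z}x$ nor $T_{\overline{\phi_z}}y$ converges in norm to a scalar multiple of $x$ or $y$ in a $\zeta$-uniform way, yet their contributions to the rank-one commutator cancel to leading order, and this is exactly what the rewriting above exploits.
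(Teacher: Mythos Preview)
Your proof is correct and follows essentially the same approach as the paper: reduce to rank-one operators, exploit the conjugate-linear cancellation $\zeta\,x\otimes y=x\otimes\overline{\zeta}\,y$, and apply dominated convergence. The only cosmetic difference is that the paper observes $z-\phi_z(w)=\frac{(1-|z|^2)w}{1-\overline z w}\to 0$ as $|z|\to 1^-$, so one may use $z$ itself as the cancellation scalar and bypass the final compactness-of-$\partial\mathbb D$ argument you give; but this is a matter of taste, not substance.
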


\begin{proof}
Since each compact operator can be approximated by finite rank operator in norm, we need only to consider the case that $K$ is a rank-one operator.

Suppose that $K=f\otimes g$, where $f\in \overline{zH^2}$ and $g\in H^2$. Then
\begin{align*}
S_{\phi_z}K-KT_{\phi_z} &=S_{\phi_z}(f\otimes g)-(f\otimes g)T_{\phi_z}\\
                        &=(S_{\phi_z}f)\otimes g - f\otimes (T_{\phi_z}^*g).
\end{align*}
For every $w$ on $\partial \D$, let $|z|\rightarrow 1^-$, we have $$z-\phi_z(w)=\frac{1-|z|^2}{1-\overline{z}w}w\rightarrow 0.$$ So we have by the dominated convergence theorem that
$$\|zf-\phi_zf\|_2\rightarrow 0 \ \ \ \ \  \mathrm{and}\ \ \ \ \  \|\overline{z}g-\overline{\phi_z}g\|_2\rightarrow 0$$ as $|z|\rightarrow 1^-$. It follows that
$\|\xi f-\phi_zf\|_2\rightarrow 0$ and $\|\overline{\xi}g-\overline{\phi_z}g\|_2\rightarrow 0$ if $z\rightarrow \xi\in \partial \mathbb D$.

Using the assumption that $f\in \overline{zH^2}$ and $g\in H^2$, we obtain
$$\|\xi f-S_{\phi_z}f\|_2=\|\xi f-(I-P)(\phi_{z}f)\|_2\rightarrow 0$$ and
$$\|\overline{\xi} g-T_{\phi_z}^*g\|_2=\|\overline{\xi} g-P(\overline{\phi_z}g)\|_2\rightarrow 0$$ as $z\rightarrow \xi$.
Then we obtain that
\begin{align*}
\|(S_{\phi_z}f)\otimes g - f\otimes (T_{\phi_z}^*g)\|&= \|(S_{\phi_z}f)\otimes g - \xi f\otimes g + f\otimes \overline{\xi} g - f\otimes T_{\phi_z}^*g\| \\
                                                    &\leqslant \|(S_{\phi_z}f)\otimes g - \xi f\otimes g\| + \|f\otimes (\overline{\xi} g) - f\otimes (T_{\phi_z}^*g)\|\\
                                                    &=\|(S_{\phi_z}f-\xi f)\otimes g\|+ \|f\otimes (\overline{\xi}g - T_{\phi_z}^*g)\|\\
                                                    &= \|S_{\phi_z}f-\xi f\|_2 \cdot\|g\|_2 + \|f\|_2 \cdot \|\overline{\xi}g - T_{\phi_z}^*g\|_2,
\end{align*}
to get
$$\lim\limits_{|z|\rightarrow 1^-} \|S_{\phi_z}f\otimes g - f\otimes T_{\phi_z}^*g\|=0,$$
which completes the proof.
\end{proof}

\begin{rem}
The Carleson-Corona theorem (\cite{Gar}) tells us that the conclusions of Lemmas \ref{2b} and \ref{2h} are equivalent to the condition that for each $m\in \mathcal M(H^\infty+C)$,
$$\lim\limits_{z\rightarrow m}\|K-T_{\phi_z}^*KT_{\phi_z}\|=0\ \ \ \ \ \mathrm{and} \ \ \ \ \ \lim_{z\rightarrow m}\|S_{\phi_z}K-KT_{\phi_z}\|=0$$
for $z$ in the unit disk $\mathbb D$.
\end{rem}

Combining Lemmas \ref{2g} and \ref{2h},  we obtain the following necessary condition for the compactness of the operator $T_fH_{u\overline{g}}^*+H_{u\overline{f}}^*S_g-T_gH_{u\overline{f}}^*-H_{u\overline{g}}^*S_f$.

\begin{prop}\label{2d}
Suppose that $u$ is a nonconstant inner function and $f,g\in L^\infty$. Let $m\in \mathcal M(H^\infty + C)$ and  $S_m$ be its support set.
Suppose that the operator $$T_fH_{u\overline{g}}^*+H_{u\overline{f}}^*S_g-T_gH_{u\overline{f}}^*-H_{u\overline{g}}^*S_f$$ is compact, and $f|_{S_m},\ g|_{S_m}\in H^\infty|_{S_m}$. Then either\\
$(1)$ $\left((u-\lambda)\overline{f}\right)|_{S_m}$ and  $\left((u-\lambda)\overline{g}\right)|_{S_m}$ are in $H^\infty|_{S_m}$ for some constant $\lambda$; or\\
$(2)$ there exist constants $a$, $b$, not both zero, such that $(af+bg)|_{S_m}$ is a constant.
\end{prop}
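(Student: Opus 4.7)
The plan is to adapt the localization technique from Proposition~\ref{2c}: compute $AS_{\overline{\phi_z}}-T_{\phi_z}^*A$ for
$$A := T_fH_{u\overline{g}}^*+H_{u\overline{f}}^*S_g-T_gH_{u\overline{f}}^*-H_{u\overline{g}}^*S_f,$$
express it as a sum of four rank-one operators, and exploit its smallness in norm (coming from the compactness of $A$) to pin down the structure of $f$ and $g$ on $S_m$. For the two summands of the form $T_\psi H_\chi^*$, the adjoint of Lemma~\ref{2g} applied to $H_\chi T_{\overline{\psi}}$ gives
$$T_\psi H_\chi^*S_{\overline{\phi_z}}-T_{\phi_z}^*T_\psi H_\chi^*=(VH_{\overline{\psi}}k_z)\otimes(H_\chi k_z).$$
For the summands $H_\chi^*S_\psi$, I use $H_\chi^*=VH_\chi V$ and $S_\psi V=VT_{\overline{\psi}}$ from Remark~\ref{rem1} (together with the easy consequence $T_{\phi_z}^*V=VS_{\phi_z}$) to rewrite $H_\chi^*S_\psi=VH_\chi T_{\overline{\psi}}V$, then apply Lemma~\ref{2g} inside the $V\cdots V$ sandwich and the easy identity $V(x\otimes y)V=(Vx)\otimes(Vy)$ to derive
$$H_\chi^*S_\psi S_{\overline{\phi_z}}-T_{\phi_z}^*H_\chi^*S_\psi=-(VH_\chi k_z)\otimes(H_{\overline{\psi}}k_z).$$
Assembling the four pieces yields
\begin{align*}
AS_{\overline{\phi_z}}-T_{\phi_z}^*A =\ &(VH_{\overline{f}}k_z)\otimes(H_{u\overline{g}}k_z)+(VH_{u\overline{g}}k_z)\otimes(H_{\overline{f}}k_z)\\
&-(VH_{\overline{g}}k_z)\otimes(H_{u\overline{f}}k_z)-(VH_{u\overline{f}}k_z)\otimes(H_{\overline{g}}k_z),
\end{align*}
and Lemma~\ref{2h} applied to the compact operator $A^*:H^2\to\overline{zH^2}$ (then adjointed) shows that the left-hand side has norm $\varepsilon(z)\to 0$ as $|z|\to 1^-$.

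I would then split by $d:=\dim\mathrm{span}\{[\overline{f}|_{S_m}],[\overline{g}|_{S_m}]\}$ in $(L^\infty|_{S_m})/(H^\infty|_{S_m})$. If $d=0$, then $\overline{f},\overline{g}\in H^\infty|_{S_m}$, which combined with the hypothesis $f,g\in H^\infty|_{S_m}$ forces $f|_{S_m}$ and $g|_{S_m}$ to be constants, giving conclusion~(2). If $d=1$, then (after possibly interchanging $f,g$) $[\overline{f}|_{S_m}]=c[\overline{g}|_{S_m}]$ gives $f-\overline{c}g\in H^\infty|_{S_m}\cap\overline{H^\infty|_{S_m}}$, hence $(f-\overline{c}g)|_{S_m}$ is constant, again yielding~(2).

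The essential case is $d=2$. As in the proof of~(\ref{C-Z}), I first establish
$$\varliminf_{z\to m}\bigl(\|H_{\overline{f}}k_z\|_2^2\|H_{\overline{g}}k_z\|_2^2-|\langle H_{\overline{f}}k_z,H_{\overline{g}}k_z\rangle|^2\bigr)>0$$
(otherwise a nontrivial linear combination of $\overline{f},\overline{g}$ would lie in $H^\infty|_{S_m}$, violating $d=2$). Evaluating the rank-one identity at $H_{\overline{f}}k_z$ and $H_{\overline{g}}k_z$ and then applying $V$ produces a $2\times 2$ linear system whose coefficient matrix has determinant bounded away from $0$, so one can solve for uniformly bounded $\alpha(z),\beta(z),\gamma(z),\delta(z)$ with
\begin{align*}
H_{u\overline{f}}k_z &= \alpha(z)H_{\overline{f}}k_z+\beta(z)H_{\overline{g}}k_z+\varepsilon(z),\\
H_{u\overline{g}}k_z &= \gamma(z)H_{\overline{f}}k_z+\delta(z)H_{\overline{g}}k_z+\varepsilon(z).
\end{align*}
A Bolzano--Weierstrass argument produces constants $\alpha,\beta,\gamma,\delta$ (independent of $z$) for which these hold along a subnet converging to $m$. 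Substituting back into the rank-one identity and comparing the coefficients of the four outer products built from $\{VH_{\overline{f}}k_z,VH_{\overline{g}}k_z\}$ and $\{H_{\overline{f}}k_z,H_{\overline{g}}k_z\}$ (linearly independent by the same Gram-determinant reason) forces $\beta=\gamma=0$ and $\alpha=\delta=:\lambda$. Lemma~\ref{2e} then gives $((u-\lambda)\overline{f})|_{S_m},\,((u-\lambda)\overline{g})|_{S_m}\in H^\infty|_{S_m}$, which is conclusion~(1).

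The principal obstacle is bookkeeping around the anti-linearity of $V$ in the back-substitution step: since $V(cx)=\overline{c}Vx$, the eight terms arising from expanding $(VH_{\overline{f}}k_z)\otimes(\gamma H_{\overline{f}}k_z+\delta H_{\overline{g}}k_z)+\cdots$ must collapse with all conjugates in their correct slots so that only the symmetric scalar constraints $\beta=\gamma=0$ and $\alpha=\delta$ survive; a careless calculation can easily produce spurious relations (for example, forcing $\lambda$ to be real) rather than the correct scalar-matrix conclusion.
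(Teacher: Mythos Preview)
Your proposal is correct and follows essentially the same route as the paper: the paper passes to $K^*=A^*$, uses the identity $H_{\varphi\psi}=H_\varphi T_\psi+S_\varphi H_\psi$ to rewrite $K^*$ as a sum of four $H_\bullet T_\bullet$ terms, applies Lemma~\ref{2g} to obtain the adjoint of your rank-one formula (their~(\ref{2.m})), and then carries out the identical dimension-split and $2\times2$ linear-algebra argument (their~(\ref{2.bc})--(\ref{2.p})) to force $\beta=\gamma=0$, $\alpha=\delta=\lambda$. The only difference is cosmetic---you stay with $A$ and handle the $H^*S$ terms via the $V$-sandwich $H_\chi^*S_\psi=VH_\chi T_{\overline\psi}V$ rather than via Identity~(4.5)---and your anti-linearity bookkeeping concern is real but harmless here, since both $V(\cdot)$ and $(\cdot)\otimes(c\,\cdot)$ conjugate scalars, so the expanded coefficients all land as $\overline\alpha,\overline\beta,\overline\gamma,\overline\delta$ and the scalar-matrix conclusion is unchanged.
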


\begin{proof}
Suppose that
\begin{eqnarray}\label{2.k}
T_fH_{u\overline{g}}^*+H_{u\overline{f}}^*S_g-T_gH_{u\overline{f}}^*-H_{u\overline{g}}^*S_f=K
\end{eqnarray}
for some compact operator $K$. Taking adjoint of (\ref{2.k}), we have
$$H_{u\overline{g}}T_{\overline{f}}+S_{\overline{g}}H_{u\overline{f}}-H_{u\overline{f}}T_{\overline{g}}-S_{\overline{f}}H_{u\overline{g}}=K^*.$$
By Identity (4.5) of \cite{StZ1}:
$$H_{\varphi \psi}=H_{\varphi}T_{\psi}+S_{\varphi}H_{\psi}=H_{\psi}T_{\varphi}+S_{\psi}H_{\varphi}$$ for any $\varphi,\ \psi\in L^\infty$,
we also have
$$H_{u\overline{g}}T_{\overline{f}}-H_{\overline{g}}T_{u\overline{f}}-H_{u\overline{f}}T_{\overline{g}}+H_{\overline{f}}T_{u\overline{g}}=K^*.$$
From Lemma \ref{2g}, we have
\begin{align*}
K^*T_{\phi_z}-S_{\phi_z}K^*&=H_{u\overline{g}}k_z\otimes T_{f\overline{\phi_z}}k_z-H_{\overline{g}}k_z\otimes T_{\overline{u}f\overline{\phi_z}}k_z\\
&~~~~~ \ \ \ \ -H_{u\overline{f}}k_z\otimes T_{g\overline{\phi_z}}k_z+H_{\overline{f}}k_z\otimes T_{\overline{u}g\overline{\phi_z}}k_z.
\end{align*}
By Lemma \ref{2h}, the norm of the left hand side in the above equality tends to $0$ as $z\rightarrow m$. Thus we obtain

\begin{equation}\label{2.l}
\begin{array}{l}
H_{u\overline{f}}k_z\otimes T_{g\overline{\phi_z}}k_z-H_{u\overline{g}}k_z\otimes T_{f\overline{\phi_z}}k_z\vspace{2mm}\\
=H_{\overline{f}}k_z\otimes T_{\overline{u}g\overline{\phi_z}}k_z-H_{\overline{g}}k_z\otimes T_{\overline{u}f\overline{\phi_z}}k_z+\varepsilon(z).
\end{array}
\end{equation}
By Lemma \ref{2g} and (\ref{2.l}), we have
\begin{equation}\label{2.m}
\begin{array}{l}
H_{u\overline{g}}k_z\otimes VH_{\overline{f}}k_z-H_{u\overline{f}}k_z\otimes VH_{\overline{g}}k_z \vspace{2mm}\\
=H_{\overline{g}}k_z\otimes VH_{u\overline{f}}k_z-H_{\overline{f}}k_z\otimes VH_{u\overline{g}}k_z+\varepsilon(z).
\end{array}
\end{equation}
For $[\overline{f}|_{S_m}],\ [\overline{g}|_{S_m}]\in (L^{\infty}|_{S_m})/ (H^{\infty}|_{S_m})$,
the dimension of $\mathrm{span}\left\{[\overline{f}|_{S_m}],\ [\overline{g}|_{S_m}]\right\}$ should be $0$, $1$, or $2$. Let us analyse these three cases in the following.

\textbf{Case 1.}\ If $\mathrm{dim}\left(\mathrm{span} \left\{[\overline{f}|_{S_m}],\ [\overline{g}|_{S_m}]\right\}\right)=0$, then $[\overline{f}|_{S_m}]=[\overline{g}|_{S_m}]=0$, which
implies that $\overline{f}|_{S_m},\ \overline{g}|_{S_m}\in H^{\infty}|_{S_m}$. This gives that $f|_{S_m}$ and $g|_{S_m}$ are constants, and $(f+g)|_{S_m}$ is also a constant.

\textbf{Case 2.}\ If $\mathrm{dim}\left(\mathrm{span} \left\{[\overline{f}|_{S_m}],\ [\overline{g}|_{S_m}]\right\}\right)=1$, we assume that $[\overline{g}|_{S_m}]\neq 0$. Then
there is a constant $\lambda$ such that $[(\overline{f}+\lambda \overline{g})|_{S_m}]=0$, i.e.,
$$(\overline{f}+\lambda \overline{g})|_{S_m}\in H^{\infty}|_{S_m}.$$
On the other hand, since $f|_{S_m},\ g|_{S_m}\in H^{\infty}|_{S_m}$, we get that $(f+\overline{\lambda}g)|_{S_m}$ is a constant.

\textbf{Case 3.}\ If $\mathrm{dim}\left(\mathrm{span}\left\{[\overline{f}|_{S_m}],\ [\overline{g}|_{S_m}]\right\}\right)=2$, Lemma \ref{2e} gives that
$$\varliminf\limits_{z\rightarrow m}\|H_{\overline{f}}k_z\|_2\geqslant  d_1>0 \ \ \ \ \ \mathrm{and} \ \ \ \  \ \varliminf\limits_{z\rightarrow m}\|H_{\overline{g}}k_z\|_2\geqslant d_2>0$$
for some constants $d_1$ and $d_2$.
By (\ref{2.m}), we have
\begin{equation}\label{2.ba}
\begin{array}{l}
\langle  VH_{\overline{f}}k_z, VH_{\overline{f}}k_z\rangle H_{u\overline{g}}k_z-\langle VH_{\overline{f}}k_z,VH_{\overline{g}}k_z\rangle H_{u\overline{f}}k_z \vspace{2mm}\\
=\langle VH_{\overline{f}}k_z, VH_{u\overline{f}}k_z\rangle H_{\overline{g}}k_z-\langle VH_{\overline{f}}k_z, VH_{u\overline{g}}k_z\rangle H_{\overline{f}}k_z+\varepsilon(z)
\end{array}
\end{equation}
and
\begin{equation}\label{2.ba}
\begin{array}{l}
\langle  VH_{\overline{g}}k_z, VH_{\overline{f}}k_z\rangle H_{u\overline{g}}k_z-\langle VH_{\overline{g}}k_z,VH_{\overline{g}}k_z\rangle H_{u\overline{f}}k_z \vspace{2mm}\\
=\langle VH_{\overline{g}}k_z, VH_{u\overline{f}}k_z\rangle H_{\overline{g}}k_z-\langle VH_{\overline{g}}k_z, VH_{u\overline{g}}k_z\rangle H_{\overline{f}}k_z+\varepsilon(z).
\end{array}
\end{equation}
Since $V$ is anti-unitary, we also have
\begin{equation}\label{2.bc}
\begin{array}{l}
\|H_{\overline{f}}k_z\|_2^2 H_{u\overline{g}}k_z-\langle H_{\overline{g}}k_z,H_{\overline{f}}k_z\rangle H_{u\overline{f}}k_z \vspace{2mm}\\
=\langle H_{u\overline{f}}k_z, H_{\overline{f}}k_z\rangle H_{\overline{g}}k_z-\langle H_{u\overline{g}}k_z, H_{\overline{f}}k_z\rangle H_{\overline{f}}k_z+\varepsilon(z)
\end{array}
\end{equation}
and
\begin{equation}\label{2.bd}
\begin{array}{l}
\langle H_{\overline{f}}k_z, H_{\overline{g}}k_z\rangle H_{u\overline{g}}k_z-\|H_{\overline{g}}k_z\|_2^2 H_{u\overline{f}}k_z \vspace{2mm}\\
=\langle H_{u\overline{f}}k_z, H_{\overline{g}}k_z\rangle H_{\overline{g}}k_z-\langle H_{u\overline{g}}k_z, H_{\overline{g}}k_z\rangle H_{\overline{f}}k_z+\varepsilon(z).
\end{array}
\end{equation}
Using the same arguments as the one in the proof of Claim \ref{2aa}, we conclude that
$$\varliminf\limits_{z\rightarrow m}\left(\|H_{\overline{f}}k_z\|_2^2\cdot \|H_{\overline{g}}k_z\|_2^2-|\langle H_{\overline{f}}k_z,H_{\overline{g}}k_z\rangle|^2\right)=\rho$$ for some constant $\rho>0$.
By (\ref{2.bc}) and (\ref{2.bd}),  we can find $\{a_{ij}(z)\}_{i,j=1}^2$  such that
\begin{align*}
\binom{H_{u\overline{f}}k_z} {H_{u\overline{g}}k_z}=\begin{pmatrix} a_{11}(z) & a_{12}(z)\\
a_{21}(z) & a_{22}(z)\end{pmatrix} \binom{H_{\overline{f}}k_z}{H_{\overline{g}}k_z}+\varepsilon(z)
\end{align*}
for $z\in \mathcal{O}(m)\cap \mathbb D$, where $\{a_{ij}(z)\}_{i,j=1}^2$  are  uniformly bounded for  $z$ in $\mathcal{O}(m)\cap \mathbb D$. By the Bolzano-Weierstrass theorem and Lemma \ref{2e}, there are constants $\left\{a_{ij}\right\}_{i,j=1}^2$ (independent of $z$) such that
\begin{align*}
\binom{H_{u\overline{f}}k_z} {H_{u\overline{g}}k_z}=\begin{pmatrix} a_{11} & a_{12}\\
a_{21} & a_{22}\end{pmatrix} \binom{H_{\overline{f}}k_z}{H_{\overline{g}}k_z}+\varepsilon(z)
\end{align*}
for $z\in \mathcal{O}(m)\cap \mathbb D$,
to obtain
\begin{equation} \label{2.n}
\left\{
\begin{aligned}
H_{u\overline{f}}k_z &=a_{11}H_{\overline{f}}k_z+a_{12}H_{\overline{g}}k_z+\varepsilon(z),\\
H_{u\overline{g}}k_z &=a_{21}H_{\overline{f}}k_z+a_{22}H_{\overline{g}}k_z+\varepsilon(z).\\
\end{aligned}
\right.
\end{equation}
Combining (\ref{2.m}) and (\ref{2.n}), we have
\begin{eqnarray*}
H_{\overline{g}}k_z\otimes V(a_{22}H_{\overline{f}}k_z-a_{12}H_{\overline{g}}k_z)-H_{\overline{f}}k_z\otimes V(-a_{21}H_{\overline{f}}k_z+a_{11}H_{\overline{g}}k_z)\\
= H_{\overline{g}}k_z\otimes VH_{u\overline{f}}k_z-H_{\overline{f}}k_z\otimes VH_{u\overline{g}}k_z+\varepsilon(z).
\end{eqnarray*}
Since $\left[\overline{f}|_{S_m}\right]$ and $\left[\overline{g}|_{S_m}\right]$ are linearly independent, we obtain
\begin{equation} \label{2.o}
\begin{cases}
H_{u\overline{f}}k_z = a_{22}H_{\overline{f}}k_z-a_{12}H_{\overline{g}}k_z+\varepsilon(z),\vspace{2mm}\\
H_{u\overline{g}}k_z = -a_{21}H_{\overline{f}}k_z+a_{11}H_{\overline{g}}k_z+\varepsilon(z),
\end{cases}
\end{equation}
where $z\in \mathcal{O}(m)\cap \mathbb D$. (\ref{2.n}) and (\ref{2.o}) imply that $a_{11}=a_{22}$ and $a_{12}=a_{21}=0$. Thus there is a constant $\lambda$ such that
\begin{equation} \label{2.p}
\begin{cases}
H_{u\overline{f}}k_z = \lambda H_{\overline{f}}k_z+\varepsilon(z),\vspace{2mm}\\
H_{u\overline{g}}k_z = \lambda H_{\overline{g}}k_z+\varepsilon(z).\vspace{2mm}\\
\end{cases}
\end{equation}
Therefore,
$$\lim_{z\rightarrow m} \|H_{(u-\lambda)\overline{f}}k_z\|_2=\|H_{(u-\lambda)\overline{g}}k_z\|_2=0,$$
which implies that $\left((u-\lambda)\overline{f}\right)|_{S_m},\ \left((u-\lambda)\overline{g}\right)|_{S_m}\in H^{\infty}|_{S_m}$, to complete the proof of Proposition \ref{2d}.
\end{proof}

Proposition \ref{2d} yields the following necessary condition for the compactness of the third operator $H_{uf}T_g+S_fH_{ug}-H_{ug}T_f-S_gH_{uf}$ given in Lemma \ref{1b}.
\begin{prop}\label{2k}
Let $u$ be a nonconstant inner function, $f,g\in L^\infty$, and $m\in \mathcal M(H^\infty + C).$ Suppose that $\overline{f}|_{S_m},\ \overline{g}|_{S_m}\in H^\infty|_{S_m}$ and the operator
 $$H_{uf}T_g+S_fH_{ug}-H_{ug}T_f-S_gH_{uf}$$ is compact. Then either\\
  $(1)$ $\left((u-\lambda)f\right)|_{S_m}$  and  $\left((u-\lambda)g\right)|_{S_m}$ are in $H^\infty|_{S_m}$ for some constant $\lambda$; or\\
  $(2)$ there exist constants $a$, $b$, not both zero, such that $(af+bg)|_{S_m}$ is a constant.
\end{prop}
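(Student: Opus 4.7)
The plan is to deduce Proposition \ref{2k} from Proposition \ref{2d} by conjugating with the anti-unitary operator $V$ and swapping the roles of the symbols with their complex conjugates. This is natural because the hypothesis $\overline{f}|_{S_m},\overline{g}|_{S_m}\in H^\infty|_{S_m}$ here is the ``conjugate'' of the hypothesis $f|_{S_m},g|_{S_m}\in H^\infty|_{S_m}$ in Proposition \ref{2d}, and the operator in Proposition \ref{2k} involves $T_\varphi, S_\varphi, H_{u\varphi}$ in exactly the pattern that one obtains from the operator in Proposition \ref{2d} after applying the identities in Remark \ref{rem1}.

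First I would record the consequences of Remark \ref{rem1} together with $V^{2}=I$, namely $VH_\varphi V=H_\varphi^{*}$, $VH_\varphi^{*}V=H_\varphi$, $VT_\varphi V=S_{\overline\varphi}$ and $VS_\varphi V=T_{\overline\varphi}$, along with the fact that $V(AB)V=(VAV)(VBV)$ for any pair of bounded operators $A,B$ (this is immediate from $V^2=I$, even though $V$ is anti-linear). Applying these term by term, I would compute
\begin{align*}
V\bigl(H_{uf}T_g+S_fH_{ug}-H_{ug}T_f-S_gH_{uf}\bigr)V
=H_{uf}^{*}S_{\overline g}+T_{\overline f}H_{ug}^{*}-H_{ug}^{*}S_{\overline f}-T_{\overline g}H_{uf}^{*}.
\end{align*}
Comparing this with the operator $T_FH_{u\overline{G}}^{*}+H_{u\overline{F}}^{*}S_G-T_GH_{u\overline{F}}^{*}-H_{u\overline{G}}^{*}S_F$ appearing in Proposition \ref{2d}, one sees that the right-hand side above is exactly that operator with $F=\overline f$ and $G=\overline g$.

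Since $V$ is anti-unitary, conjugation by $V$ preserves compactness, so the compactness assumption in Proposition \ref{2k} is equivalent to the compactness of the operator in Proposition \ref{2d} evaluated at the pair $(\overline f,\overline g)$. The additional hypothesis $\overline f|_{S_m},\overline g|_{S_m}\in H^\infty|_{S_m}$ is precisely the analytic hypothesis that Proposition \ref{2d} requires for this pair. Invoking Proposition \ref{2d} then yields either the existence of a constant $\lambda$ with $((u-\lambda)\overline{\overline f})|_{S_m},((u-\lambda)\overline{\overline g})|_{S_m}\in H^\infty|_{S_m}$, which is condition $(1)$ of Proposition \ref{2k}, or constants $\alpha,\beta$, not both zero, with $(\alpha\overline f+\beta\overline g)|_{S_m}$ constant; taking complex conjugates of the latter gives constants $a=\overline\alpha$, $b=\overline\beta$ (not both zero) such that $(af+bg)|_{S_m}$ is constant, which is condition $(2)$.

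The only genuine work is the bookkeeping in the computation of $V\,\cdot\,V$ and the verification that signs and conjugates line up correctly; once this is done, no new analysis is required. The main conceptual obstacle, namely the deep structural analysis via Lemmas \ref{2a}, \ref{2b}, \ref{2g}, \ref{2h}, and the dichotomy on $\mathrm{dim}\,\mathrm{span}\{[\overline f|_{S_m}],[\overline g|_{S_m}]\}$, has already been absorbed into Proposition \ref{2d}, so the present proposition is essentially a formal corollary obtained by exploiting the duality supplied by $V$.
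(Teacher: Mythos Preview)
Your proposal is correct and matches the paper's approach: the paper simply writes ``Proposition \ref{2d} yields the following necessary condition\ldots'' and states Proposition \ref{2k} without further argument, relying implicitly on the intertwining relations from Remark \ref{rem1} (exactly the $VH_\varphi V=H_\varphi^{*}$, $VT_\varphi V=S_{\overline\varphi}$ identities you spell out) to pass from the third operator in Lemma \ref{1b} to the second with symbols $(\overline f,\overline g)$. Your computation of $VK_3V$ is accurate and lines up term-by-term with the operator in Proposition \ref{2d} evaluated at $(F,G)=(\overline f,\overline g)$, so the reduction goes through as you describe.
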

Combining Propositions \ref{2c}, \ref{2f}, \ref{2d} and \ref{2k},  we obtain the following necessary condition for the compactness of the commutator of  $D_f$ and $D_g$.
\begin{thm}\label{necessary condition}
Let $u$ be a nonconstant inner function, $f,g\in L^\infty$ and $m\in \mathcal M(H^\infty + C).$ If  $[D_f,D_g]$ is compact, then for the support set $S_m$ of $m$, one of the following  holds:\\
$(1)$ $f|_{S_m}$, $g|_{S_m}$, $\left((u-\lambda)\overline{f}\right)|_{S_m}$ and $\left((u-\lambda)\overline{g}\right)|_{S_m}$ are in $H^\infty|_{S_m}$ for some constant $\lambda$; \\
$(2)$ $\overline{f}|_{S_m}$, $\overline{g}|_{S_m}$, $\left((u-\lambda)f\right)|_{S_m}$ and $\left((u-\lambda)g\right)|_{S_m}$ are in $H^\infty|_{S_m}$ for some constant $\lambda$; \\
$(3)$ there exist constants $a$, $b$, not both zero, such that $(af+bg)|_{S_m}$ is a constant.
\end{thm}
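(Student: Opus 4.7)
The plan is to assemble Theorem \ref{necessary condition} directly from the four propositions already established, using Lemma \ref{1b} as the bridge. Since $[D_f,D_g]$ is compact, Lemma \ref{1b} tells us that each of the four operators listed there is compact, so we may simultaneously invoke Propositions \ref{2c}, \ref{2f}, \ref{2d} and \ref{2k} at the given $m \in \mathcal{M}(H^\infty+C)$.

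First, I would apply Proposition \ref{2c} to the compact operator $T_fT_g+H_{u\overline{f}}^*H_{ug}-T_gT_f-H_{u\overline{g}}^*H_{uf}$. This yields a trichotomy on $S_m$: either (a) $f|_{S_m}, g|_{S_m} \in H^\infty|_{S_m}$, or (b) $\overline{f}|_{S_m}, \overline{g}|_{S_m} \in H^\infty|_{S_m}$, or (c) some nontrivial linear combination $(af+bg)|_{S_m}$ is constant. In subcase (c) we are immediately finished, as this is precisely condition $(3)$ of the theorem, so I only have to upgrade (a) and (b) to the stronger statements (1) and (2).

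In subcase (a), I would feed the hypothesis $f|_{S_m}, g|_{S_m} \in H^\infty|_{S_m}$ into Proposition \ref{2d}, using the compactness of $T_fH_{u\overline{g}}^*+H_{u\overline{f}}^*S_g-T_gH_{u\overline{f}}^*-H_{u\overline{g}}^*S_f$. Proposition \ref{2d} then gives either $((u-\lambda)\overline{f})|_{S_m}$ and $((u-\lambda)\overline{g})|_{S_m}$ are in $H^\infty|_{S_m}$ for some $\lambda$, which combined with (a) yields condition $(1)$ of the theorem, or a nontrivial constant combination exists, which is condition $(3)$. Subcase (b) is completely symmetric: starting from $\overline{f}|_{S_m}, \overline{g}|_{S_m} \in H^\infty|_{S_m}$ and the compactness of $H_{uf}T_g+S_fH_{ug}-H_{ug}T_f-S_gH_{uf}$, Proposition \ref{2k} gives either $((u-\lambda)f)|_{S_m}, ((u-\lambda)g)|_{S_m} \in H^\infty|_{S_m}$ for some $\lambda$, producing condition $(2)$, or again a constant combination producing condition $(3)$.

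Since the real technical content has already been absorbed into the four propositions, there is no serious obstacle here; the proof is a clean case analysis, and Proposition \ref{2f} on the compactness of $H_{uf}H_{u\overline{g}}^*+S_fS_g-H_{ug}H_{u\overline{f}}^*-S_gS_f$ is not even needed for the argument above, though it could be used as an alternative route to the trichotomy in case one preferred to start from the $(2,2)$ block rather than the $(1,1)$ block of the commutator matrix. The only point to state carefully is that the $\lambda$ produced in subcase (a) and the $\lambda$ produced in subcase (b) are independent constants, matching the formulation of conditions $(1)$ and $(2)$ in the theorem, and that cases (a) and (b) are exclusive cases of the analysis so no compatibility issue arises between the two different $\lambda$'s.
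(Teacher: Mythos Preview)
Your proposal is correct and follows essentially the same approach as the paper: the paper's proof of Theorem \ref{necessary condition} is simply the one-line statement ``Combining Propositions \ref{2c}, \ref{2f}, \ref{2d} and \ref{2k}, we obtain the following necessary condition,'' and your write-up spells out exactly how that combination works via Lemma \ref{1b} and the case analysis. Your observation that Proposition \ref{2f} is redundant here (serving only as an alternative entry point to the same trichotomy) is accurate.
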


\section{\label{S2}The sufficient part of  Theorem \ref{MR}}
In this section, we  will complete the proof of the sufficient part of Theorem \ref{MR}. To do so, we need two  lemmas.
\begin{lem}\label{3a}
Let $f, g\in L^\infty$ and
\begin{align*}
F_z&=H_{\overline{g}}k_z\otimes VH_{u\overline{f}}k_z-H_{u\overline{g}}k_z\otimes VH_{\overline{f}}k_z\\
&~~~~~ \ \ \ \ -H_{\overline{f}}k_z\otimes VH_{u\overline{g}}k_z+H_{u\overline{f}}k_z\otimes VH_{\overline{g}}k_z,
\end{align*}
where $z\in \mathbb D$. For each support set $S$, suppose that $f$ and $g$ satisfy one of the following conditions:\\
  $(1)$ $f|_S$, $g|_S$, $\left((u-\lambda)\overline{f}\right)|_{S}$ and $\left((u-\lambda)\overline{g}\right)|_{S}$ are in $H^\infty|_{S}$ for some constant $\lambda$; \\
  $(2)$ $\overline{f}|_S$, $\overline{g}|_S$, $\left((u-\lambda)f\right)|_{S}$ and $\left((u-\lambda)g\right)|_{S}$ are in $H^\infty|_{S}$ for some constant  $\lambda$; \\
  $(3)$ there exist constants $a$, $b$, not both zero, such that $(af+bg)|_{S}$ is a constant.\\
Then we have $$\lim\limits_{|z|\rightarrow 1^{-}}\|F_z\|=0.$$
\end{lem}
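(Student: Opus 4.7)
The plan is a contradiction-and-subnet argument that reduces the global norm bound to a fiber statement at each point $m\in\mathcal{M}(H^\infty+C)$. Since $\mathcal{M}(H^\infty+C)=\mathcal{M}(H^\infty)\setminus \D$ is compact, if $\|F_z\|\not\to 0$ as $|z|\to 1^{-}$, one can extract a net $z_\alpha\to m\in\mathcal{M}(H^\infty+C)$ along which $\|F_{z_\alpha}\|$ stays bounded below by some $\delta>0$. It therefore suffices to prove $\lim_{z\to m}\|F_z\|=0$ for each such $m$, using whichever of the three conditions is assumed on its support set $S_m$.

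I would set $A=H_{\overline{f}}k_z$, $A'=H_{u\overline{f}}k_z$, $B=H_{\overline{g}}k_z$, $B'=H_{u\overline{g}}k_z$, so that
$$F_z=B\otimes VA'-B'\otimes VA-A\otimes VB'+A'\otimes VB.$$
The key bookkeeping device is the identity $x\otimes V(\alpha y)=\alpha\,(x\otimes Vy)$ for scalars $\alpha$, which follows because $V$ is anti-linear while $x\otimes y$ is conjugate-linear in $y$. In Case $(2)$, Lemma \ref{2e} gives $\|A\|_2,\|B\|_2\to 0$ as $z\to m$, and the identity $H_{u\varphi}=S_u H_\varphi$ (a direct consequence of $L^2=H^2\oplus\overline{zH^2}$ and the definition of $S_u$) then forces $\|A'\|_2,\|B'\|_2\to 0$ as well, so each rank-one summand in $F_z$ has norm $o(1)$.

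In Case $(1)$, Lemma \ref{2e} applied to $\bigl((u-\lambda)\overline{f}\bigr)|_{S_m}\in H^\infty|_{S_m}$ and its $g$-analogue gives $A'=\lambda A+\varepsilon(z)$ and $B'=\lambda B+\varepsilon(z)$ with $\|\varepsilon(z)\|_2\to 0$. Substituting into $F_z$ and invoking the scalar-extraction rule, the four principal terms collapse to
$$\lambda(B\otimes VA)-\lambda(B\otimes VA)-\lambda(A\otimes VB)+\lambda(A\otimes VB)=0,$$
while the cross terms are $o(1)$ because $\|A\|_2,\|A'\|_2,\|B\|_2,\|B'\|_2$ are uniformly bounded by $\max(\|f\|_\infty,\|g\|_\infty)$. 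Case $(3)$ reduces to the same algebra: if $b\neq 0$, then $(\overline{a}\overline{f}+\overline{b}\overline{g})|_{S_m}$ is a constant, and Lemma \ref{2e} together with $H_{u\cdot\mathrm{const}}=0$ (since $u\in H^\infty$) yields $B=\mu A+\varepsilon(z)$ and $B'=\mu A'+\varepsilon(z)$ with $\mu=-\overline{a}/\overline{b}$, after which the same cancellation applies; if $b=0$, then $f|_{S_m}$ is constant, so $A,A'\to 0$ and the uniform bound on $B,B'$ finishes the argument. The only real obstacle is correctly tracking the anti-linearity of $V$ inside the tensor product, but once the scalar-extraction identity is in hand, each case reduces to a single line of algebra.
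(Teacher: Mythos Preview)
Your argument is correct and follows essentially the same route as the paper: reduce via Corona/compactness to a fixed $m\in\mathcal{M}(H^\infty+C)$, then treat the three cases separately using Lemma~\ref{2e}. The only cosmetic difference is that the paper writes out the algebraic cancellations explicitly in terms of the Hankel symbols (e.g.\ regrouping $H_{u\overline{f}}k_z$ as $H_{(u-\lambda)\overline{f}}k_z+\lambda H_{\overline{f}}k_z$), whereas you package the same cancellation more compactly via the substitution $A'=\lambda A+\varepsilon(z)$ together with your scalar-extraction identity $x\otimes V(\alpha y)=\alpha\,(x\otimes Vy)$; the paper is implicitly using exactly this identity in its Case~(1) computation.
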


\begin{proof}
For each $m\in \mathcal M(H^\infty+C)$, let $S_m$ be the support set of $m$. By the Carleson-Corona theorem, we need only to show
$$\lim\limits_{z\rightarrow m}\|F_z\|=0.$$

If $f$ and $g$ satisfy Condition (2), then  we have by  Lemma \ref{2e} that  $$\lim\limits_{z\rightarrow m}\|H_{\overline{f}}k_z\|_2=0\ \ \ \ \  \mathrm{and}\ \ \ \ \ \lim\limits_{z\rightarrow m}\|H_{\overline{g}}k_z\|_2=0.$$ It follows that $$\lim\limits_{z\rightarrow m}\|F_z\|=0.$$

Assume that Condition (1) holds for $f$ and $g$, i.e.,
$$f|_{S_m},\ g|_{S_m},\ \left((u-\lambda)\overline{f}\right)|_{S_m}\ \ \ \ \ \mathrm{and} \ \ \ \ \  \left((u-\lambda)\overline{g}\right)|_{S_m}\in H^{\infty}|_{S_m}.$$
According to Lemma \ref{2e}, we have $$\lim\limits_{z\rightarrow m}\|H_{f}k_z\|_2=\lim\limits_{z\rightarrow m}\|H_{g}k_z\|_2=0,$$
and moreover, $$\lim\limits_{z\rightarrow m}\|H_{(u-\lambda)\overline{f}}k_z\|_2=\lim\limits_{z\rightarrow m}\|H_{(u-\lambda)\overline{g}}k_z\|_2=0.$$
Since
\begin{align*}
F_z&=H_{\overline{g}}k_z\otimes VH_{[(u-\lambda)\overline{f}+\lambda\overline{f}]}k_z-H_{u\overline{g}}k_z\otimes VH_{\overline{f}}k_z\\
&~~~~~ \ \ \ \ -H_{\overline{f}}k_z\otimes VH_{[(u-\lambda)\overline{g}+\lambda\overline{g}]}k_z+H_{u\overline{f}}k_z\otimes VH_{\overline{g}}k_z\\
&=H_{\overline{g}}k_z\otimes VH_{(u-\lambda)\overline{f}}k_z+\lambda H_{\overline{g}}k_z\otimes VH_{\overline{f}}k_z-H_{u\overline{g}}k_z\otimes VH_{\overline{f}}k_z\\
&~~~~~ \ \ \ \ -H_{\overline{f}}k_z\otimes VH_{(u-\lambda)\overline{g}}k_z-\lambda H_{\overline{f}}k_z\otimes VH_{\overline{g}}k_z+H_{u\overline{f}}k_z\otimes VH_{\overline{g}}k_z\\
&=H_{\overline{g}}k_z\otimes VH_{(u-\lambda)\overline{f}}k_z+H_{(\lambda-u)\overline{g}}k_z\otimes VH_{\overline{f}}k_z\\
&~~~~~ \ \ \ \ -H_{\overline{f}}k_z\otimes VH_{(u-\lambda)\overline{g}}k_z-H_{(\lambda-u)\overline{f}}k_z\otimes VH_{\overline{g}}k_z,
\end{align*}
we have
\begin{align*}
\|F_z\| &\leqslant \|H_{\overline{g}}k_z\otimes VH_{(u-\lambda)\overline{f}}k_z\|+\|H_{(\lambda-u)\overline{g}}k_z\otimes VH_{\overline{f}}k_z\| \\
&~~~~~ \ \ \ \ +\|H_{\overline{f}}k_z\otimes VH_{(u-\lambda)\overline{g}}k_z\|+\|H_{(\lambda-u)\overline{f}}k_z\otimes VH_{\overline{g}}k_z\| \\
&=\|H_{\overline{g}}k_z\|_2\cdot\|VH_{(u-\lambda)\overline{f}}k_z\|_2+\|H_{(\lambda-u)\overline{g}}k_z\|_2\cdot\|VH_{\overline{f}}k_z\|_2\\
&~~~~~ \ \ \ \ +\|H_{\overline{f}}k_z\|_2\cdot\|VH_{(u-\lambda)\overline{g}}k_z\|_2+\|H_{(\lambda-u)\overline{g}}k_z\|_2\cdot\|VH_{\overline{f}}k_z\|_2\\
&= \|H_{\overline{g}}k_z\|_2\cdot\|H_{(u-\lambda)\overline{f}}k_z\|_2+\|H_{(\lambda-u)\overline{g}}k_z\|_2\cdot\|H_{\overline{f}}k_z\|_2\\
&~~~~~ \ \ \ \ +\|H_{\overline{f}}k_z\|_2\cdot\|H_{(u-\lambda)\overline{g}}k_z\|_2+\|H_{(\lambda-u)\overline{g}}k_z\|_2\cdot\|H_{\overline{f}}k_z\|_2.
\end{align*}
This gives us that $$\lim\limits_{z\rightarrow m}\|F_z\|=0.$$

To finish our proof, we suppose that $f$ and $g$ satisfy Condition (3).
Without loss of generality, we may assume that $(f-ag)|_{S_m}=c$ for some constant $c$. Then we get that
$$(f-ag)|_{S_m},\ \ \ (\overline{f}-\overline{a}\overline{g})|_{S_m}\in H^{\infty}|_{S_m}$$ and
$$\left(u(f-ag)\right)|_{S_m},\ \ \ \left(u(\overline{f}-\overline{a}\overline{g})\right)|_{S_m}\in H^{\infty}|_{S_m}.$$
Noting that
\begin{align*}
F_z&=H_{\overline{g}}k_z\otimes VH_{[u(\overline{f}-\overline{a}\overline{g})+u\overline{a}\overline{g}]}k_z-H_{u\overline{g}}k_z\otimes VH_{(\overline{f}-\overline{a}\overline{g}+\overline{a}\overline{g})}k_z\\
&~~~~~ \ \ \ \ -H_{\overline{f}}k_z\otimes VH_{u\overline{g}}k_z+H_{u\overline{f}}k_z\otimes VH_{\overline{g}}k_z\\
&=H_{\overline{g}}k_z\otimes VH_{u(\overline{f}-\overline{a}\overline{g})}k_z+H_{\overline{a}\overline{g}}k_z\otimes VH_{u\overline{g}}k_z\\
&~~~~~ \ \ \ \ -H_{u\overline{g}}k_z\otimes VH_{(\overline{f}-\overline{a}\overline{g})}k_z-H_{\overline{a}u\overline{g}}k_z\otimes VH_{\overline{g}}k_z\\
&~~~~~ \ \ \ \ -H_{\overline{f}}k_z\otimes VH_{u\overline{g}}k_z+H_{u\overline{f}}k_z\otimes VH_{\overline{g}}k_z\\
&=H_{\overline{g}}k_z\otimes VH_{u(\overline{f}-\overline{a}\overline{g})}k_z+H_{(\overline{a}\overline{g}-\overline{f})}k_z\otimes VH_{u\overline{g}}k_z\\
&~~~~~ \ \ \ \ -H_{u\overline{g}}k_z\otimes VH_{(\overline{f}-\overline{a}\overline{g})}k_z-H_{u(\overline{a}\overline{g}-\overline{f})}k_z\otimes VH_{\overline{g}}k_z,
\end{align*}
we obtain
\begin{align*}
\|F_z\|&\leqslant\|H_{\overline{g}}k_z\otimes VH_{u(\overline{f}-\overline{a}\overline{g})}k_z\|+\|H_{(\overline{a}\overline{g}-\overline{f})}k_z\otimes VH_{u\overline{g}}k_z\|\\
&~~~~~ \ \ \ \ +\|H_{u\overline{g}}k_z\otimes VH_{(\overline{f}-\overline{a}\overline{g})}k_z\|+\|H_{u(\overline{a}\overline{g}-\overline{f})}k_z\otimes VH_{\overline{g}}k_z\|\\
&= 2\Big(\|H_{\overline{g}}k_z\|_2\cdot\|VH_{u(\overline{f}-\overline{a}\overline{g})}k_z\|_2+\|H_{u\overline{g}}k_z\|_2\cdot\|VH_{(\overline{f}-\overline{a}\overline{g})}k_z\|_2\Big)\\
&= 2\Big(\|H_{\overline{g}}k_z\|_2\cdot\|H_{u(\overline{f}-\overline{a}\overline{g})}k_z\|_2+\|H_{u\overline{g}}k_z\|_2\cdot\|H_{(\overline{f}-\overline{a}\overline{g})}k_z\|_2\Big).
\end{align*}
By Lemma \ref{2e} again, now we conclude that $$\lim\limits_{z\rightarrow m}\|F_z\|=0,$$ to complete the proof of Lemma \ref{3a}.
\end{proof}

The following lemma will be needed in the proof of Theorem \ref{MR}, which was established in \cite[Lemma 17]{GkZ1}.
\begin{lem}\label{3.d}
Suppose that $\varphi$ and $\psi$ are in $L^\infty$. Let $m\in \mathcal M(H^\infty+C)$. If
$$\lim\limits_{z\rightarrow m}\|H_{\varphi}k_z\|_2=0,$$
then we have
$$\lim\limits_{z\rightarrow m}\|H_{\varphi}T_{\psi}k_z\|_2=0.$$
\end{lem}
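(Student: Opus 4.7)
The plan is to translate the hypothesis into a condition on the support set and then exploit an elementary product identity. By Lemma~\ref{2e} the hypothesis $\lim_{z\to m}\|H_\varphi k_z\|_2=0$ is equivalent to $\varphi|_{S_m}\in H^\infty|_{S_m}$. I would choose $h\in H^\infty$ with $(\varphi-h)|_{S_m}=0$ and set $\eta:=\varphi-h$; since $H_h=0$, we have $H_\varphi=H_\eta$, and it suffices to prove the conclusion assuming $\eta|_{S_m}=0$.

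Next, I would apply the identity
\[
H_\eta T_\psi k_z \;=\; H_{\eta\psi}k_z \;-\; S_\eta H_\psi k_z,
\]
obtained by decomposing $\psi k_z = T_\psi k_z + H_\psi k_z$ into its $H^2$ and $\overline{zH^2}$ components, multiplying by $\eta$, and applying $I-P$ (using $H_\psi k_z\in\overline{zH^2}$ to get $(I-P)(\eta H_\psi k_z)=S_\eta H_\psi k_z$). Since $(\eta\psi)|_{S_m}=0\in H^\infty|_{S_m}$, Lemma~\ref{2e} immediately yields $\|H_{\eta\psi} k_z\|_2\to 0$ as $z\to m$.

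The main obstacle is controlling $\|S_\eta H_\psi k_z\|_2\le\|\eta H_\psi k_z\|_2=\bigl(\int|\eta|^2|H_\psi k_z|^2\,d\theta/(2\pi)\bigr)^{1/2}$: the naive bound $\|\eta\|_\infty\|H_\psi k_z\|_2$ is useless because $\|H_\psi k_z\|_2$ need not vanish at $m$, and by Nehari $\|\eta\|_\infty$ cannot be shrunk. The strategy is to show that the mass $|H_\psi k_z|^2\,d\theta/(2\pi)$ concentrates on $S_m$ as $z\to m$, so that pairing against $|\eta|^2$ (which vanishes on $S_m$) produces a vanishing limit. For $\psi\in H^\infty+C$ this admits a clean realization: writing $\psi=h_0+c$ with $c\in C(\partial\D)$, the continuity of the Poisson extension of $c$ gives $\|T_\psi k_z-\hat\psi(m)k_z\|_2\to 0$, and then
\[
\|H_\varphi T_\psi k_z\|_2\le\|\varphi\|_\infty\|T_\psi k_z-\hat\psi(m)k_z\|_2+|\hat\psi(m)|\|H_\varphi k_z\|_2 \to 0.
\]
Extending this argument to arbitrary $\psi\in L^\infty$ — where $L^\infty$-norm approximation by $H^\infty+C$ is unavailable — is the delicate technical step, handled in \cite[Lemma~17]{GkZ1}, presumably by a careful analysis of the Berezin transform of the positive operator $H_\psi^*M_{|\eta|^2}H_\psi$ on the fiber over $m$ and the fact that its value at $m$ is governed by $|\eta|^2|_{S_m}=0$.
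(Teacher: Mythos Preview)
The paper does not prove this lemma; it simply quotes \cite[Lemma~17]{GkZ1}. Your proposal goes further than the paper by sketching a reduction (via Lemma~\ref{2e} and the identity $H_\eta T_\psi=H_{\eta\psi}-S_\eta H_\psi$), correctly disposes of the term $H_{\eta\psi}k_z$, and then---for general $\psi\in L^\infty$---defers to the very same reference \cite[Lemma~17]{GkZ1} for the remaining term. So in the end you and the paper rest on the same external source; your additional reduction is sound but does not replace the cited argument.

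One correction to your treatment of the special case $\psi\in H^\infty+C$: the assertion $\|T_\psi k_z-\hat\psi(m)k_z\|_2\to 0$ fails whenever the $H^\infty$ component of $\psi$ is nonconstant on $S_m$. For instance, if $\psi$ is an inner function with $|\psi(m)|<1$, then $T_\psi k_z=\psi k_z$ and $\|\psi k_z-\psi(m)k_z\|_2^2=\int|\psi-\psi(m)|^2|k_z|^2\,d\theta/(2\pi)\to 1-|\psi(m)|^2>0$. The clean way to handle this case is to observe that $H_\psi$ is compact for $\psi\in H^\infty+C$ (Hartman's theorem), whence $\|S_\eta H_\psi k_z\|_2\le\|\eta\|_\infty\|H_\psi k_z\|_2\to 0$ since $k_z\to 0$ weakly; alternatively, split $\psi=h_0+c$ and treat the $H^\infty$ part via $H_\varphi T_{h_0}=H_{\varphi h_0}$ together with Lemma~\ref{2e}, reserving your continuity argument for the $C(\partial\mathbb{D})$ part only.
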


Now we are ready to complete the proof of Theorem \ref{MR}.
\begin{proof}[\emph{\textbf{\emph{Proof of the sufficient part of Theorem \ref{MR}.}}}] Let $m$ be in $\mathcal M(H^\infty+C)$. We
suppose that one of Conditions (1), (2) and (3) in Theorem \ref{MR} holds on the support set $S_m$. By Lemma \ref{1b}, we need to show that
$$T_fT_g+H_{u\overline{f}}^*H_{ug}-T_gT_f-H_{u\overline{g}}^*H_{uf},$$
$$T_fH_{u\overline{g}}^*+H_{u\overline{f}}^*S_g-T_gH_{u\overline{f}}^*-H_{u\overline{g}}^*S_f,$$
$$H_{uf}T_g+S_fH_{ug}-H_{ug}T_f-S_gH_{uf}$$
 and
$$H_{uf}H_{u\overline{g}}^*+S_fS_g-H_{ug}H_{u\overline{f}}^*-S_gS_f$$
are   compact.

Letting
\begin{align*}
K_1&=T_fT_g+H_{u\overline{f}}^*H_{ug}-T_gT_f-H_{u\overline{g}}^*H_{uf}\\
   &=(T_{fg}-H_{\overline{f}}^*H_g)+H_{u\overline{f}}^*H_{ug}-(T_{fg}-H_{\overline{g}}^*H_f)-H_{u\overline{g}}^*H_{uf}\\
   &=(H_{\overline{g}}^*H_f-H_{\overline{f}}^*H_g)-(H_{u\overline{g}}^*H_{uf}-H_{u\overline{f}}^*H_{ug}),
\end{align*}
we are going to show that $K_1$ is compact first.

In order to show that $K_1$ is compact, we first check that each condition of  Theorem \ref{MR} can imply (\ref{2.a}).
Indeed, if  Condition (1)  holds, then we have $$f|_{S_m},\ \ g|_{S_m}\in H^{\infty}|_{S_m}.$$
Using Lemma \ref{2e} and Identity (4.5) of \cite{StZ1}: $$H_{uf}=S_uH_f, \ \ \ \ H_{ug}=S_uH_g,$$
 we have $$\lim\limits_{z\rightarrow m}\|H_{f}k_z\|_2=\lim\limits_{z\rightarrow m}\|H_{g}k_z\|_2=0$$
and $$\lim\limits_{z\rightarrow m}\|H_{uf}k_z\|_2=\lim\limits_{z\rightarrow m}\|H_{ug}k_z\|_2=0,$$
which implies that
$$\lim\limits_{z\rightarrow m}\|H_{\overline{g}}k_z\otimes H_fk_z-H_{\overline{f}}k_z\otimes H_gk_z\|=0$$
and
$$\lim\limits_{z\rightarrow m}\|H_{u\overline{g}}k_z\otimes H_{uf}k_z-H_{u\overline{f}}k_z\otimes H_{ug}k_z\|=0.$$

Similarly, if $\overline{f}|_{S_m},\ \overline{g}|_{S_m}\in H^{\infty}|_{S_m}$, then we also have
$$\lim\limits_{z\rightarrow m}\|H_{\overline{g}}k_z\otimes H_fk_z-H_{\overline{f}}k_z\otimes H_gk_z\|=0$$
and
$$\lim\limits_{z\rightarrow m}\|H_{u\overline{g}}k_z\otimes H_{uf}k_z-H_{u\overline{f}}k_z\otimes H_{ug}k_z\|=0.$$
Thus Condition (1) or (2) in Theorem \ref{MR} can imply (\ref{2.a}).

If Condition (3)   holds, we have
$$(af+bg)|_{S_m}=c$$ for some constants $a,\ b,\ c$ with $|a|+|b|\neq 0$. Without loss of generality, we may assume that
$$(f-dg)|_{S_m}=e$$ for some constants $d$ and $e$. Then we have
\begin{align*}
H_{\overline{g}}k_z\otimes H_{f}k_z-H_{\overline{f}}k_z\otimes H_{g}k_z
&=H_{\overline{g}}k_z\otimes H_{(f-dg+dg)}k_z-H_{\overline{f}}k_z\otimes H_{g}k_z\\
&=H_{\overline{g}}k_z\otimes H_{(f-dg)}k_z+ \overline{d}H_{\overline{g}}k_z\otimes H_{g}k_z-H_{\overline{f}}k_z\otimes H_{g}k_z\\
&=H_{\overline{g}}k_z\otimes H_{(f-dg)}k_z+ H_{(\overline{dg}-\overline{f})}k_z\otimes H_{g}k_z
\end{align*}
and
\begin{align*}
&H_{u\overline{g}}k_z\otimes H_{uf}k_z-H_{u\overline{f}}k_z\otimes H_{ug}k_z \\
&=H_{u\overline{g}}k_z\otimes H_{u(f-dg+dg)}k_z-H_{u\overline{f}}k_z\otimes H_{ug}k_z\\
&=H_{u\overline{g}}k_z\otimes H_{u(f-dg)}k_z+ \overline{d}H_{u\overline{g}}k_z\otimes H_{ug}k_z -H_{u\overline{f}}k_z\otimes H_{ug}k_z\\
&= H_{u\overline{g}}k_z\otimes H_{u(f-dg)}k_z+ H_{u(\overline{dg}-\overline{f})}k_z\otimes H_{ug}k_z.
\end{align*}

Since $(f-dg)|_{S_m}$ is a constant, we conclude that $\big(u(f-dg)\big)|_{S_m}$ and $\big(u(\overline{dg}-\overline{f})\big)|_{S_m}$
both belong to $H^{\infty}|_{S_m}$.
Using Lemma \ref{2e} again, we get that
$$\lim\limits_{z\rightarrow m} \|H_{\overline{g}}k_z\otimes H_{f}k_z-H_{\overline{f}}k_z\otimes H_{g}k_z\|=0$$ and
$$\lim\limits_{z\rightarrow m} \|H_{u\overline{g}}k_z\otimes H_{uf}k_z-H_{u\overline{f}}k_z\otimes H_{ug}k_z\|=0,$$
which implies that the equation in (\ref{2.a}) holds, as desired.

By the definition of $K_1$ and Lemma \ref{2a}, we have
\begin{align*}
K_{1}-T_{\phi_{z}}^*K_{1}T_{\phi_{z}}&=\Big[(H_{\overline{g}}^*H_f-H_{\overline{f}}^*H_g)-(H_{u\overline{g}}^*H_{uf}-H_{u\overline{f}}^*H_{ug})\Big]\\
& \ \ \  \ \ -T_{\phi_{z}}^*\Big[(H_{\overline{g}}^*H_f-H_{\overline{f}}^*H_g)-(H_{u\overline{g}}^*H_{uf}-H_{u\overline{f}}^*H_{ug})\Big]T_{\phi_{z}}\\
&=V\Big[(H_{\overline{g}}k_z\otimes H_fk_z-H_{\overline{f}}k_z\otimes H_gk_z)\Big]V^* \\
& \ \ \  \ \ -V\Big[(H_{u\overline{g}}k_z\otimes H_{uf}k_z-H_{u\overline{f}}k_z\otimes H_{ug}k_z)\Big]V^*.
\end{align*}
It follows that
\begin{align}\label{compact1}
\lim\limits_{|z|\rightarrow 1^-}\|K_{1}-T_{\phi_{z}}^*K_{1}T_{\phi_{z}}\|=0.
\end{align}
On the other hand, since
$$H_{u\overline{f}}^*H_{ug}=T_{fg}-T_{\overline{u}f}T_{ug}$$ and
$$H_{u\overline{g}}^*H_{uf}=T_{fg}-T_{\overline{u}g}T_{uf},$$
we have
$$K_1=(T_f T_g-T_gT_f)+(T_{\overline{u}g}T_{uf}-T_{\overline{u}f}T_{ug}),$$
which is a finite sum of finite products of Toeplitz operators.
According to \cite[Theorem 12]{GkZ2}, we obtain by (\ref{compact1}) that $K_1$ is equal to a compact perturbation of a Toeplitz operator, i.e.,
$$K_1=T_h+K$$
for some $h\in L^\infty$ and some compact operator $K$.  Thus $K=K_1-T_h$ belongs to the Toeplitz algebra $\mathscr{T}_{L^{\infty}}$. We conclude by \cite[Corollary 6]{BH} that $h=0$ a.e., which implies that $K_1=K$ is compact.

To show the fourth operator $H_{uf}H_{u\overline{g}}^*+S_fS_g-H_{ug}H_{u\overline{f}}^*-S_gS_f$
is compact, we recall that $$VH_{\varphi}=H_{\varphi}^*V\ \ \ \ \mathrm{and} \ \ \ \  S_{\varphi}V=VT_{\overline{\varphi}}.$$
Then
\begin{equation}\label{3.a}
\begin{array}{l}
V\left(H_{uf}H_{u\overline{g}}^*+S_fS_g-H_{ug}H_{u\overline{f}}^*-S_gS_f\right)V \vspace{2mm}\\
=H_{uf}^*VVH_{u\overline{g}}+T_{\overline{f}}VVT_{\overline{g}}-H_{ug}^*VVH_{u\overline{f}}-T_{\overline{g}}VVT_{\overline{f}} \vspace{2mm}\\
=H_{uf}^*H_{u\overline{g}}+T_{\overline{f}}T_{\overline{g}}-H_{ug}^*H_{u\overline{f}}-T_{\overline{g}}T_{\overline{f}},
\end{array}
\end{equation}
where the second equality follows from $V^2=I$. Using the same method as the above, we can show  similarly that  $$H_{uf}^*H_{u\overline{g}}+T_{\overline{f}}T_{\overline{g}}-H_{ug}^*H_{u\overline{f}}-T_{\overline{g}}T_{\overline{f}}$$ is compact. Furthermore, (\ref{3.a}) gives us that
$$H_{uf}H_{u\overline{g}}^*+S_fS_g-H_{ug}H_{u\overline{f}}^*-S_gS_f$$ is also compact.

Now we turn to the proof of the compactness of the second operator:
$$T_fH_{u\overline{g}}^*+H_{u\overline{f}}^*S_g-T_gH_{u\overline{f}}^*-H_{u\overline{g}}^*S_f.$$  Denoting the above operator by $$K_2=T_fH_{u\overline{g}}^*+H_{u\overline{f}}^*S_g-T_gH_{u\overline{f}}^*-H_{u\overline{g}}^*S_f,$$
we need only to consider the  compactness  of $K_2K_2^*$. From Identity (4.5) in \cite{StZ1}, we have  $$H_{\varphi \psi}=H_{\varphi}T_{\psi}+S_{\varphi}H_{\psi}=H_{\psi}T_{\varphi}+S_{\psi}H_{\varphi}$$ for any $\varphi,\ \psi\in L^\infty$,
to obtain
$$K_2=T_fH_{u\overline{g}}^*-T_{\overline{u}f}H_{\overline{g}}^*-T_gH_{u\overline{f}}^*+T_{\overline{u}g}H_{\overline{f}}^*$$
and $$K_2^*=H_{u\overline{g}}T_{\overline{f}}-H_{\overline{g}}T_{u\overline{f}}-H_{u\overline{f}}T_{\overline{g}}+H_{\overline{f}}T_{u\overline{g}}.$$

Observe that the operator $K_2K_2^*$ is in the Toeplitz algebra $\mathscr{T}_{L^{\infty}}$ and the symbol map maps $K_2K_2^*$ to 0.
By \cite[Theorem 12]{GkZ2} again, we need only to prove that  $$\lim\limits_{z\rightarrow m} \|K_2K_2^*-T_{\phi_z}^*K_2K_2^*T_{\phi_z}\|=0.$$
By Lemma \ref{2g} and $VT_{\varphi\overline{\phi_z}}k_z=-H_{\overline{\varphi}}k_z$ for all $\varphi$ in $L^{\infty}$, $$K_2^*T_{\phi_z}=S_{\phi_z}K_{2}^*-F_z,$$ where $F_z$ is introduced in  Lemma \ref{3a} and $\lim\limits_{|z|\rightarrow 1^-}\|F_z\|=0$.  Thus we have
\begin{align*}
T_{\phi_z}^*K_2K_2^*T_{\phi_z}&= (K_2^*T_{\phi_z})^*K_2^*T_{\phi_z}\\
                               &=(S_{\phi_z}K_{2}^*-F_z)^*(S_{\phi_z}K_{2}^*-F_z)\\
                               &=(K_2S_{\phi_z}^*-F_z^*)(S_{\phi_z}K_{2}^*-F_z)\\
                               &=K_2S_{\phi_z}^*S_{\phi_z}K_{2}^*-K_2S_{\phi_z}^*F_z-F_z^*S_{\phi_z}K_{2}^*+F_z^*F_z\\
                               &=K_2(I-Vk_z\otimes Vk_z)K_{2}^*-K_2S_{\phi_z}^*F_z-F_z^*S_{\phi_z}K_{2}^*+F_z^*F_z\\
                               &=K_2K_2^*-(K_2Vk_z)\otimes (K_2Vk_z)-K_2S_{\phi_z}^*F_z-F_z^*S_{\phi_z}K_{2}^*+F_z^*F_z.
\end{align*}
It follows that $$ K_2K_2^*-T_{\phi_z}^*K_2K_2^*T_{\phi_z}=(K_2Vk_z)\otimes (K_2Vk_z)+K_2S_{\phi_z}^*F_z+F_z^*S_{\phi_z}K_{2}^*-F_z^*F_z.$$
Therefore, in order to show that
$$\lim\limits_{|z|\rightarrow 1^{-}} \|K_2K_2^*-T_{\phi}^*K_2K_2^*T_{\phi}\|=0,$$
it is sufficient to show
\begin{align}\label{3.b}
\lim\limits_{|z|\rightarrow 1^-} \|K_2Vk_z\|_2=0
\end{align}
as $\lim\limits_{|z|\rightarrow 1^-}\|F_z\|=0$.  For this purpose, we will check that each condition of Theorem \ref{MR} can imply (\ref{3.b}).

Recall that $$T_{\varphi}V=VS_{\overline{\varphi}}\ \ \ \ \mathrm{and} \ \ \ \  VH_{\varphi}=H_{\varphi}^*V$$
for all $\varphi\in L^\infty$, we get
\begin{eqnarray}\label{3.c}
K_2Vk_z=V\left(S_{\overline{f}}H_{u\overline{g}}k_z-S_{u\overline{f}}H_{\overline{g}}k_z
+S_{u\overline{g}}H_{\overline{f}}k_z-S_{\overline{g}}H_{u\overline{f}}k_z\right).
\end{eqnarray}

If $f$ and $g$ satisfy  Condition (2) in Theorem \ref{MR}, we have by Lemma \ref{2e} that
$$\lim\limits_{z\rightarrow m}\|H_{\overline{f}}k_z\|_2=\lim\limits_{z\rightarrow m}\|H_{\overline{g}}k_z\|_2=0$$
and $$\lim\limits_{z\rightarrow m}\|H_{u\overline{f}}k_z\|_2=\lim\limits_{z\rightarrow m}\|H_{u\overline{g}}k_z\|_2=0.$$
This gives that $$\lim\limits_{z\rightarrow m}\|K_2Vk_z\|_2=0.$$

Assume that  Condition (1) holds, i.e.,
$$f|_{S_m},\ g|_{S_m},\ \left((u-\lambda)\overline{f}\right)|_{S_m}\ \ \ \ \mathrm{and} \ \ \ \  \left((u-\lambda)\overline{g}\right)|_{S_m}\in H^\infty|_{S_m}.$$
It follows that $$\lim\limits_{z\rightarrow m}\|H_{f}k_z\|_2=\lim\limits_{z\rightarrow m}\|H_{g}k_z\|_2=0$$
and $$\lim\limits_{z\rightarrow m}\|H_{(u-\lambda)\overline{f}}k_z\|_2=\lim\limits_{z\rightarrow m}\|H_{(u-\lambda)\overline{g}}k_z\|_2=0.$$
Computing $K_2Vk_z$ directly, we obtain
\begin{align*}
K_2Vk_z&=V\left(S_{\overline{f}}H_{u\overline{g}}k_z-S_{u\overline{f}}H_{\overline{g}}k_z
+S_{u\overline{g}}H_{\overline{f}}k_z-S_{\overline{g}}H_{u\overline{f}}k_z\right)\\
&=V\left\{S_{\overline{f}}H_{[(u-\lambda)\overline{g}+\lambda\overline{g}]}k_z-S_{u\overline{f}}H_{\overline{g}}k_z
+S_{u\overline{g}}H_{\overline{f}}k_z-S_{\overline{g}}H_{[(u-\lambda)\overline{f}+\lambda\overline{f}]}k_z\right\}\\
&=V\left[S_{\overline{f}}H_{(u-\lambda)\overline{g}}k_z-S_{(u-\lambda)\overline{f}}H_{\overline{g}}k_z
+S_{(u-\lambda)\overline{g}}H_{\overline{f}}k_z-S_{\overline{g}}H_{(u-\lambda)\overline{f}}k_z\right]\\
&=V\left[S_{\overline{f}}H_{(u-\lambda)\overline{g}}k_z-S_{\overline{g}}H_{(u-\lambda)\overline{f}}k_z\right]
+V\left[S_{(u-\lambda)\overline{g}}H_{\overline{f}}k_z-S_{(u-\lambda)\overline{f}}H_{\overline{g}}k_z\right].
\end{align*}
Noting that
\begin{align*}
&S_{(u-\lambda)\overline{g}}H_{\overline{f}}k_z-S_{(u-\lambda)\overline{f}}H_{\overline{g}}k_z\\
&=(I-P)\left[(u-\lambda)\overline{g}(I-P)(\overline{f}k_z)\right]-(I-P)\left[(u-\lambda)\overline{f}(I-P)(\overline{g}k_z)\right]\\
&=(I-P)\left[(u-\lambda)\overline{g}\overline{f}k_z-(u-\lambda)\overline{g}P(\overline{f}k_z)
-(u-\lambda)\overline{f}\overline{g}k_z+(u-\lambda)\overline{f}P(\overline{g}k_z)\right]\\
&=(I-P)\left[(u-\lambda)\overline{f}P(\overline{g}k_z)-(u-\lambda)\overline{g}P(\overline{f}k_z)\right]\\
&=H_{(u-\lambda)\overline{f}}T_{\overline{g}}k_z-H_{(u-\lambda)\overline{g}}T_{\overline{f}}k_z,
\end{align*}
we have
\begin{align*}
K_2Vk_z=&V\left[S_{\overline{f}}H_{(u-\lambda)\overline{g}}k_z-S_{\overline{g}}H_{(u-\lambda)\overline{f}}k_z\right]\\
&+V\left[H_{(u-\lambda)\overline{f}}T_{\overline{g}}k_z-H_{(u-\lambda)\overline{g}}T_{\overline{f}}k_z\right]
\end{align*}
and
\begin{align*}
\|K_2Vk_z\|_2\leqslant  &\|f\|_{\infty}\cdot\|H_{(u-\lambda)\overline{g}}k_z\|_2+\|g\|_{\infty}\cdot\|H_{(u-\lambda)\overline{f}}k_z\|_2\\
&+\|H_{(u-\lambda)\overline{f}}T_{\overline{g}}k_z\|_2+\|H_{(u-\lambda)\overline{g}}T_{\overline{f}}k_z\|_2.
\end{align*}
Since
$$\left((u-\lambda)\overline{g}\right)|_{S_m} \ \ \ \ \mathrm{and} \ \ \ \left((u-\lambda)\overline{f}\right)|_{S_m}\in H^{\infty}|_{S_m},$$
we conclude by Lemma \ref{3.d}  that $\|K_2Vk_z\|_2 \rightarrow 0$ as $z\rightarrow m$.

Finally, we  suppose that Condition (3) holds. Without loss of generality, we assume that $$\left(f-\alpha g\right)|_{S_m}=\beta$$
for some constants $\alpha$ and $\beta$. Then we have
$$\left(f-\alpha g\right)|_{S_m}\ \ \ \  \ \mathrm{and}\ \ \ \ \ \left(\overline{f}-\overline{\alpha}\overline{g}\right)|_{S_m}$$ are in $H^{\infty}|_{S_m}.$ Observe that
\begin{align*}
K_2Vk_z&=V\left(S_{\overline{f}}H_{u\overline{g}}k_z-S_{u\overline{f}}H_{\overline{g}}k_z
+S_{u\overline{g}}H_{\overline{f}}k_z-S_{\overline{g}}H_{u\overline{f}}k_z\right)\\
&=V\left[S_{\overline{f}}H_{u\overline{g}}k_z-S_{u\overline{f}}H_{\overline{g}}k_z
+S_{u\overline{g}}H_{(\overline{f}-\overline{\alpha}\overline{g}+\overline{\alpha}\overline{g})}k_z
-S_{\overline{g}}H_{u(\overline{f}-\overline{\alpha}\overline{g}+\overline{\alpha}\overline{g})}k_z\right]\\
&=V\left[S_{u\overline{g}}H_{(\overline{f}-\overline{\alpha}\overline{g})}k_z-S_{\overline{g}}H_{u(\overline{f}-\overline{\alpha}\overline{g})}k_z\right]
+V\left[S_{(\overline{f}-\overline{\alpha}\overline{g})}H_{u\overline{g}}k_z-S_{u(\overline{f}-\overline{\alpha}\overline{g})}H_{\overline{g}}k_z\right].
\end{align*}
Similarly, we calculate that
\begin{align*}
&S_{(\overline{f}-\overline{\alpha}\overline{g})}H_{u\overline{g}}k_z-S_{u(\overline{f}-\overline{\alpha}\overline{g})}H_{\overline{g}}k_z\\
&=(I-P)\left[(\overline{f}-\overline{\alpha}\overline{g})(I-P)(u\overline{g}k_z)\right]
-(I-P)\left[u(\overline{f}-\overline{\alpha}\overline{g})(I-P)(\overline{g}k_z)\right]\\
&=(I-P)\left[(\overline{f}-\overline{\alpha}\overline{g})u\overline{g}k_z-(\overline{f}-\overline{\alpha}\overline{g})P(u\overline{g}k_z)
-u(\overline{f}-\overline{\alpha}\overline{g})\overline{g}k_z+u(\overline{f}-\overline{x}\overline{g})P(\overline{g}k_z)\right]\\
&=(I-P)\left[u(\overline{f}-\overline{\alpha}\overline{g})P(\overline{g}k_z)-(\overline{f}-\overline{\alpha}\overline{g})P(u\overline{g}k_z)\right]\\
&=H_{u(\overline{f}-\overline{\alpha}\overline{g})}T_{\overline{g}}k_z-H_{(\overline{f}-\overline{\alpha}\overline{g})}T_{u\overline{g}}k_z.
\end{align*}
It follows that
\begin{align*}
\|K_2Vk_z\|_2=&\left\|V\left[S_{u\overline{g}}H_{(\overline{f}-\overline{\alpha}\overline{g})}k_z-S_{\overline{g}}H_{u(\overline{f}-\overline{\alpha}\overline{g})}k_z\right]
+V\left[S_{(\overline{f}-\overline{\alpha}\overline{g})}H_{u\overline{g}}k_z-S_{u(\overline{f}-\overline{\alpha}\overline{g})}H_{\overline{g}}k_z\right]\right\|_2\\
\leqslant & \left\|S_{u\overline{g}}H_{(\overline{f}-\overline{\alpha}\overline{g})}k_z-S_{\overline{g}}H_{u(\overline{f}-\overline{\alpha}\overline{g})}k_z\right\|_2
+\left\|S_{(\overline{f}-\overline{\alpha}\overline{g})}H_{u\overline{g}}k_z-S_{u(\overline{f}-\overline{\alpha}\overline{g})}H_{\overline{g}}k_z\right\|_2\\
\leqslant & \|g\|_{\infty}\cdot\|H_{(\overline{f}-\overline{\alpha}\overline{g})}k_z\|_2+\|g\|_{\infty}\cdot\|H_{u(\overline{f}-\overline{\alpha}\overline{g})}k_z\|_2\\
&+\Big\|H_{u(\overline{f}-\overline{\alpha}\overline{g})}T_{\overline{g}}k_z-H_{(\overline{f}-\overline{\alpha}\overline{g})}T_{u\overline{g}}k_z\Big\|_2\\
\leqslant & \|g\|_{\infty}\cdot\|H_{(\overline{f}-\overline{\alpha}\overline{g})}k_z\|_2+\|g\|_{\infty}\cdot\|H_{u(\overline{f}-\overline{\alpha}\overline{g})}k_z\|_2\\
&+\|H_{u(\overline{f}-\overline{\alpha}\overline{g})}T_{\overline{g}}k_z\|_2+\|H_{(\overline{f}-\overline{\alpha}\overline{g})}T_{u\overline{g}}k_z\|_2.
\end{align*}
Using the conditions that
$$\left(\overline{f}-\overline{\alpha}\overline{g}\right)|_{S_m}\ \ \ \  \mathrm{and}\ \ \ \  \left(u(\overline{f}-\overline{\alpha}\overline{g})\right)|_{S_m}$$ are in $H^{\infty}|_{S_m}$,
we again conclude by Lemma \ref{3.d} that $\|K_2Vk_z\|_2 \rightarrow 0$ as $z\rightarrow m$.

To summarize, each condition in Theorem \ref{MR}  implies
$$\lim\limits_{|z|\rightarrow 1^-} \|K_2K_2^*-T_{\phi}^*K_2K_2^*T_{\phi}\|=0,$$
which gives that $K_2$ is compact.

In order to complete the proof, it remains to show the third operator
$$K_3=H_{uf}T_g+S_fH_{ug}-H_{ug}T_f-S_gH_{uf}$$ is compact.
Rewrite $K_3$ as follows:
\begin{align*}
K_3&=H_{uf}T_g+S_fH_{ug}-H_{ug}T_f-S_gH_{uf}\\
   &=H_{uf}T_g+H_{fug}-H_{f}T_{ug}-H_{ug}T_f-H_{guf}+H_{g}T_{uf}\\
   &=H_{uf}T_g-H_{f}T_{ug}-H_{ug}T_f+H_{g}T_{uf}.
\end{align*}
Observe that
$$K_3^*=T_{\overline{g}}H_{uf}^*-T_{\overline{u}\overline{g}}H_{f}^*-T_{\overline{f}}H_{ug}^*+T_{\overline{u}\overline{f}}H_{g}^*$$
has the same form as $K_2$. Using the same arguments as in the proof of the compactness of $K_2$, we conclude that $K_3^*$ is also compact, which implies that $K_3$ is compact.

Finally, as the necessity part of Theorem \ref{MR} was contained in Theorem \ref{necessary condition}, thus we finish the proof of Theorem \ref{MR}.
\end{proof}

\section{The necessary part of  Theorem \ref{MR2}}
Section 5 is devoted to the proof of the necessary part of Theorem \ref{MR2}. Let  us begin with the following necessary condition for the compactness of the first operator given in Lemma \ref{semi-c}.
\begin{prop}\label{5a}
Let $u$  be a nonconstant inner function, $f,\ g\in L^\infty$ and $m\in \M(H^\infty+C)$. Suppose that
$$T_fT_g+H_{u\overline{f}}^{*}H_{ug}-T_{fg}$$
is compact. Then for the support set $S_m$ of $m$, one of the following holds:\\
 $(1)$ $\overline{f}|_{S_m}$ is in $H^\infty|_{S_m}$; \\
 $(2)$ $g|_{S_m}$ is in $H^\infty|_{S_m}$.
\end{prop}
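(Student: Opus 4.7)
The plan is to exploit the classical semicommutator identity $T_{fg}-T_fT_g=H_{\overline{f}}^{*}H_g$ to reduce the hypothesis to a rank-one operator equation, and then to run a streamlined version of the argument carried out in Case~2 of the proof of Proposition~\ref{2c}. With this identity the compactness hypothesis is equivalent to the compactness of
$$K:=H_{u\overline{f}}^{*}H_{ug}-H_{\overline{f}}^{*}H_g,$$
so combining Lemmas~\ref{2a} and~\ref{2b} (via the Carleson-Corona reformulation in the remark following Lemma~\ref{2h}) yields
$$(H_{u\overline{f}}k_z)\otimes(H_{ug}k_z)-(H_{\overline{f}}k_z)\otimes(H_gk_z)=\varepsilon(z),$$
with $\|\varepsilon(z)\|\to 0$ as $z\to m$.

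Suppose, for a contradiction, that both $\overline{f}|_{S_m}\notin H^\infty|_{S_m}$ and $g|_{S_m}\notin H^\infty|_{S_m}$. By Lemma~\ref{2e} there is a neighbourhood $\mathcal{O}(m)$ of $m$ on which $\|H_{\overline{f}}k_z\|_2$ and $\|H_gk_z\|_2$ are uniformly bounded below by positive constants. Evaluating the rank-one identity at the vector $H_gk_z$ and dividing by $\|H_gk_z\|_2^{2}$ gives
$$H_{\overline{f}}k_z=\alpha(z)\,H_{u\overline{f}}k_z+\varepsilon(z),\qquad \alpha(z):=\frac{\langle H_gk_z,H_{ug}k_z\rangle}{\|H_gk_z\|_2^{2}}.$$
Since $H_{ug}=S_uH_g$ by Identity~(4.5) of~\cite{StZ1} (using $H_u=0$) and $\|S_u\|\leq 1$, Cauchy-Schwarz forces $|\alpha(z)|\leq 1$ throughout $\mathcal{O}(m)\cap\mathbb{D}$. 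Passing to a subnet $\{z_\beta\}\to m$ along which $\alpha(z_\beta)$ converges to some constant $\alpha$ with $|\alpha|\leq 1$, I deduce
$$\varliminf_{z\to m}\|H_{(1-\alpha u)\overline{f}}k_z\|_2=\varliminf_{z\to m}\|H_{\overline{f}}k_z-\alpha H_{u\overline{f}}k_z\|_2=0,$$
and Lemma~\ref{2e} then yields $((1-\alpha u)\overline{f})|_{S_m}\in H^\infty|_{S_m}$.

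The last step is the outer-function argument from Case~2 of the proof of Proposition~\ref{2c}: Lemma~1 of~\cite{GkZ2} ensures that $1-\alpha u$ is outer on $S_m$ (because $|\alpha|\leq 1$ and $u$ is nonconstant on $S_m$), so for each $\epsilon>0$ there exists $p\in H^\infty$ with $\int_{S_m}|p(1-\alpha u)-1|^{2}\,d\mu_m<\epsilon$. The identical $L^{4/3}$--$L^{4}$ Hölder interpolation used in Proposition~\ref{2c} then converts the membership $((1-\alpha u)\overline{f})|_{S_m}\in H^\infty|_{S_m}$ into $\lim_{z\to m}\|H_{\overline{f}}k_z\|_2=0$, hence $\overline{f}|_{S_m}\in H^\infty|_{S_m}$, contradicting the standing assumption. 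The rank-one identity here involves only two tensor products instead of the four appearing in Proposition~\ref{2c}, so no analogue of Claim~\ref{2aa} is required; the one genuinely delicate point is the outer-function step when $|\alpha|=1$, but it is already established in Proposition~\ref{2c} and can be quoted directly.
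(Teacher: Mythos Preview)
Your proposal is correct and follows essentially the same approach as the paper: rewrite $K=H_{u\overline f}^{*}H_{ug}-H_{\overline f}^{*}H_g$, use Lemmas~\ref{2a}--\ref{2b} to obtain the rank-two tensor identity, test it against a vector to extract a coefficient of modulus $\leq 1$, pass to a subnet limit, and invoke the outer-function argument from Case~2 of Proposition~\ref{2c}. The only cosmetic difference is that the paper tests on the $H_{\overline f}k_z$ side (via the adjoint) and concludes $g|_{S_m}\in H^\infty|_{S_m}$ directly, whereas you test on the $H_gk_z$ side and reach $\overline f|_{S_m}\in H^\infty|_{S_m}$ by contradiction; these are symmetric variants of the same argument.
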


\begin{proof}
Suppose that
$$K=T_fT_g+H_{u\overline{f}}^{*}H_{ug}-T_{fg}$$
is compact. Clearly,  $K$  can be rewritten as
$$K=H_{u\overline{f}}^{*}H_{ug}-H_{\overline{f}}^*H_{g}.$$
By Lemmas \ref{2a} and \ref{2b}, we have
\begin{eqnarray*}
\lim\limits_{z\rightarrow m}\|K-T_{\phi_z}^{*}KT_{\phi_z}\|
=\lim\limits_{z\rightarrow m}\left\|V\left[H_{u\overline{f}}k_z\otimes H_{ug}k_z-H_{\overline{f}}k_z\otimes H_{g}k_z\right]V^{*}\right\|=0,
\end{eqnarray*}
which gives
\begin{eqnarray}\label{5.a}
\lim\limits_{z\rightarrow m}\left\|H_{u\overline{f}}k_z\otimes H_{ug}k_z-H_{\overline{f}}k_z\otimes H_{g}k_z\right\|=0.
\end{eqnarray}
For $\left[\overline{f}|_{S_m}\right]\in \left(L^{\infty}|_{S_m}\right)\big/\left(H^{\infty}|_{S_m}\right)$, let us consider the following two cases.

\textbf{Case 1.} If $\left[\overline{f}|_{S_m}\right]=0$, then $\overline{f}|_{S_m}\in H^{\infty}|_{S_m}$, as desired.

\textbf{Case 2.} Suppose that $\left[\overline{f}|_{S_m}\right]\neq 0$. Then we have by Lemma \ref{2e} that
$$\varliminf\limits_{z\rightarrow m}\|H_{\overline{f}}k_z\|_2>0.$$
On the other hand,  (\ref{5.a}) gives  that
$$\lim\limits_{z\rightarrow m}\left\|\frac{\langle H_{\overline{f}}k_z,H_{u\overline{f}}k_z\rangle}{\|H_{\overline{f}}k_z\|_2^2} H_{ug}k_z- H_{g}k_z\right\|_2=0.$$
Note that $\frac{\langle H_{\overline{f}}k_z,H_{u\overline{f}}k_z\rangle}{\|H_{\overline{f}}k_z\|_2^2}$ is uniformly bounded for all $z$
in some small neighborhood $\mathcal{O}(m)\cap \D$ of $m$. Using the Bolzano-Weierstrass theorem,  we can find a subnet $\{z_\alpha\}\subset \D$ such that
$$\lim\limits_{z_\alpha\rightarrow m}\frac{\langle H_{\overline{f}}k_{z_{\alpha}},H_{u\overline{f}}k_{z_{\alpha}}\rangle}{\|H_{\overline{f}}k_{z_{\alpha}}\|_2^2}=a$$
for some constant $a$ with $|a|\leqslant 1$. Furthermore, we have
$$\lim\limits_{z_{\alpha}\rightarrow m}\|aH_{ug}k_{z_{\alpha}}-H_{g}k_{z_{\alpha}}\|_2=0.$$
Thus we conclude by  Lemma \ref{2e} that
$$\lim\limits_{z\rightarrow m}\|aH_{ug}k_z-H_{g}k_z\|_2=0,$$
to get
$$\lim\limits_{z\rightarrow m}\|H_{(1-au)g}k_z\|_2=0.$$
Using the same arguments as the one in the proof of Case 2 of Proposition \ref{2c}, we obtain
$$\lim\limits_{z\rightarrow m}\|H_{g}k_z\|_2=0,$$
which implies that $g|_{S_m}\in H^{\infty}|_{S_m}$. This completes the proof.
\end{proof}

The next proposition again follows directly from the following equalities in Remark \ref{rem1}:
$$VT_{\varphi}=S_{\overline{\varphi}}V,\ \ \ VH_{\varphi}=H_{\varphi}^{*}V\ \ \ \mathrm{and} \ \ \ V^2=I.$$
\begin{prop}\label{5b}
Let $u$ be a nonconstant inner function, $f,\ g\in L^\infty$ and $m\in \M(H^\infty+C)$. Assume that
$$H_{uf}H_{u\overline{g}}^{*}+S_{f}S_{g}-S_{fg}$$
is compact. Then for the support set $S_m$ of $m$, one of the following holds:\\
 $(1)$ $f|_{S_m}$ is in $H^\infty|_{S_m}$; \\
 $(2)$ $\overline{g}|_{S_m}$ is in $H^\infty|_{S_m}$.
\end{prop}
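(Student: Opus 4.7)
The plan is to reduce this proposition directly to Proposition \ref{5a} by conjugating the operator in question with the anti-unitary involution $V$. Concretely, I would show that $V(H_{uf}H_{u\overline{g}}^{*}+S_{f}S_{g}-S_{fg})V$ is exactly the operator appearing in Proposition \ref{5a} evaluated at the symbols $\overline{f}$ and $\overline{g}$; since conjugation by $V$ preserves compactness, the desired conclusion then follows by reading off Proposition \ref{5a} with these substituted symbols.

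The first step is to record the consequences of the identities $VH_{\varphi}=H_{\varphi}^{*}V$, $S_{\varphi}V=VT_{\overline{\varphi}}$ and $V^2=I$ listed in Remark \ref{rem1}. Multiplying the first identity on the right by $V$ gives $VH_{\varphi}V=H_{\varphi}^{*}$; conjugating this once more yields $VH_{\varphi}^{*}V=H_{\varphi}$. Similarly $VS_{\varphi}V=T_{\overline{\varphi}}$. These are the only tools needed: they are purely algebraic consequences of the three relations that are already available in the paper.

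Next I would carry out the one-line computation
\begin{align*}
V\bigl(H_{uf}H_{u\overline{g}}^{*}+S_{f}S_{g}-S_{fg}\bigr)V
&=(VH_{uf}V)(VH_{u\overline{g}}^{*}V)+(VS_{f}V)(VS_{g}V)-VS_{fg}V\\
&=H_{uf}^{*}H_{u\overline{g}}+T_{\overline{f}}T_{\overline{g}}-T_{\overline{fg}}\\
&=T_{\overline{f}}T_{\overline{g}}+H_{u\overline{(\overline{f})}}^{*}H_{u\overline{g}}-T_{\overline{f}\cdot\overline{g}},
\end{align*}
which is precisely the operator treated by Proposition \ref{5a} with $\overline{f}$ in the role of $f$ and $\overline{g}$ in the role of $g$. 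Since $V$ is anti-unitary on $L^2$ (in particular isometric and bounded), $VKV$ is compact whenever $K$ is; hence the operator above is compact. Proposition \ref{5a} then forces, on each support set $S_m$, either $\overline{(\overline{f})}|_{S_m}=f|_{S_m}\in H^{\infty}|_{S_m}$ or $\overline{g}|_{S_m}\in H^{\infty}|_{S_m}$, which is exactly the dichotomy claimed.

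The proof has no serious obstacle; the only thing requiring care is bookkeeping with the anti-linear $V$ — in particular checking that conjugation $A\mapsto VAV$ is multiplicative in the obvious sense (which is immediate from $V^{2}=I$) and that it carries compact operators to compact operators. Both points are standard and follow from the fact that $V$ is an isometric bijection on $L^{2}$, so no further machinery beyond Proposition \ref{5a} and Remark \ref{rem1} is invoked.
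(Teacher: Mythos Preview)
Your proposal is correct and takes essentially the same approach as the paper: the paper does not write out a proof of Proposition \ref{5b} but simply says it ``follows directly from the equalities $VT_{\varphi}=S_{\overline{\varphi}}V$, $VH_{\varphi}=H_{\varphi}^{*}V$ and $V^2=I$'' in Remark \ref{rem1}, which is exactly the $V$-conjugation reduction to Proposition \ref{5a} that you have carried out in detail. Your bookkeeping (checking that $A\mapsto VAV$ is multiplicative and preserves compactness, and that the substitution $f\mapsto\overline{f}$, $g\mapsto\overline{g}$ matches the conclusions) is correct.
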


Combining Propositions \ref{5a} and \ref{5b}, we obtain a necessary condition for the compactness of $[D_f, D_g)$.
\begin{prop}\label{5c}
Leut $u$ be a nonconstant inner function, $f,\ g\in L^\infty$ and $m\in \M(H^\infty+C)$. Suppose that
the semicommutator $[D_f,D_g)$ is compact. Then for the support set $S_m$ of $m$, one of following conditions holds:\\
 $(1)$ $f|_{S_m}$ and $g|_{S_m}$ are in $H^\infty|_{S_m}$; \\
 $(2)$ $\overline{f}|_{S_m}$ and $\overline{g}|_{S_m}$ are in $H^\infty|_{S_m}$;\\
 $(3)$ either $f|_{S_m}$ or $g|_{S_m}$ is a constant.
\end{prop}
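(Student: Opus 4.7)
My plan is to combine Propositions \ref{5a} and \ref{5b} via the decomposition afforded by Lemma \ref{semi-c}, and then run through the resulting four cases. Since $[D_f, D_g)$ is assumed compact, Lemma \ref{semi-c} in particular guarantees that the diagonal pieces
\[
T_fT_g+H_{u\overline{f}}^{*}H_{ug}-T_{fg}\qquad\text{and}\qquad H_{uf}H_{u\overline{g}}^{*}+S_{f}S_{g}-S_{fg}
\]
are both compact. So Proposition \ref{5a} applies to the first operator and Proposition \ref{5b} to the second, giving the two dichotomies:
\[
\overline{f}|_{S_m}\in H^\infty|_{S_m}\ \text{ or }\ g|_{S_m}\in H^\infty|_{S_m},
\]
\[
f|_{S_m}\in H^\infty|_{S_m}\ \text{ or }\ \overline{g}|_{S_m}\in H^\infty|_{S_m}.
\]

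Next I would cross these two dichotomies into four cases. Two of them land directly on the desired conclusions: the case $f|_{S_m},g|_{S_m}\in H^\infty|_{S_m}$ is exactly condition $(1)$, and the case $\overline{f}|_{S_m},\overline{g}|_{S_m}\in H^\infty|_{S_m}$ is exactly condition $(2)$. The remaining two mixed cases are where one needs a small extra observation: they force either both $f|_{S_m}$ and $\overline{f}|_{S_m}$ to lie in $H^\infty|_{S_m}$, or both $g|_{S_m}$ and $\overline{g}|_{S_m}$ to lie in $H^\infty|_{S_m}$.

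The one nontrivial input is then the standard fact that $H^\infty|_{S_m}\cap \overline{H^\infty|_{S_m}}=\mathbb{C}$; equivalently, via the representing measure $d\mu_m$ for $m$, a function $h\in L^\infty$ with both $h|_{S_m}$ and $\overline{h}|_{S_m}$ in $H^\infty|_{S_m}$ satisfies $\int_{S_m}|h-\int_{S_m}h\,d\mu_m|^2\,d\mu_m=0$ by the multiplicativity of $m$ on $H^\infty$, hence $h|_{S_m}$ is a constant. Applying this to $f$ or to $g$ in the two mixed cases yields condition $(3)$. This closes the case analysis and completes the proof.

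I do not anticipate any serious obstacle beyond recalling the multiplicativity-of-representing-measure argument that produces the ``constant'' conclusion; all the hard analytic work has already been absorbed into Propositions \ref{5a} and \ref{5b}.
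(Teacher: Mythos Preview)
Your proposal is correct and is exactly the argument the paper has in mind: the paper itself does not give an explicit proof of Proposition~\ref{5c}, merely stating that it follows by combining Propositions~\ref{5a} and~\ref{5b}, and your four-case cross of the two dichotomies (together with the observation $H^\infty|_{S_m}\cap\overline{H^\infty|_{S_m}}=\mathbb{C}$, which the paper also uses without comment, e.g.\ in Case~1 of Proposition~\ref{2d}) is precisely how that combination is meant to be carried out.
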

 We establish a necessary condition for the compactness of the operator $T_fH_{u\overline{g}}^{*}+H_{u\overline{f}}^{*}S_g-H_{u\overline{fg}}^{*}$ in the following proposition.
\begin{prop}\label{5d}
Let $u$ be a nonconstant inner function, $f,\ g\in L^\infty$ and $m\in \M(H^\infty+C)$. Suppose that $$T_fH_{u\overline{g}}^{*}+H_{u\overline{f}}^{*}S_g-H_{u\overline{fg}}^{*}$$
is compact and $f|_{S_m},\ g|_{S_m}$ are in $H^{\infty}|_{S_m}$.
Then for the support set $S_m$ of $m$, one of the following holds:\\
 $(1)$ $\left((u-\lambda)\overline{f}\right)|_{S_m}$, $\left((u-\lambda)\overline{g}\right)|_{S_m}$ and
  $\left((u-\lambda)\overline{fg}\right)|_{S_m}$ are in $H^\infty|_{S_m}$ for some constant $\lambda$; \\
 $(2)$ either $f|_{S_m}$ or $g|_{S_m}$ is constant.
\end{prop}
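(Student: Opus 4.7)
Denote the operator in question by $K$ and consider its adjoint $K^* = H_{u\overline{g}}T_{\overline{f}} + S_{\overline{g}}H_{u\overline{f}} - H_{u\overline{fg}}$. Using Identity (4.5) of \cite{StZ1} in the form $H_{u\overline{fg}} = H_{\overline{g}}T_{u\overline{f}} + S_{\overline{g}}H_{u\overline{f}}$, the $H_{u\overline{fg}}$ contribution cancels and I obtain the streamlined form $K^* = H_{u\overline{g}}T_{\overline{f}} - H_{\overline{g}}T_{u\overline{f}}$. Applying Lemmas \ref{2g} and \ref{2h} to the difference $K^*T_{\phi_z}-S_{\phi_z}K^*$ then produces the rank-$\leq 2$ asymptotic identity
\begin{equation*}
(H_{\overline{g}} k_z)\otimes(VH_{u\overline{f}} k_z)-(H_{u\overline{g}} k_z)\otimes(VH_{\overline{f}} k_z)=\varepsilon(z),
\end{equation*}
with $\|\varepsilon(z)\|\to 0$ as $|z|\to 1^{-}$. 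This has exactly the same shape as the rank-two identity driving the proof of Proposition \ref{2d}.

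Next I would split into cases based on $\varliminf_{z\to m}\|H_{\overline{f}}k_z\|_2$ and $\varliminf_{z\to m}\|H_{\overline{g}}k_z\|_2$. If either vanishes, Lemma \ref{2e} places $\overline{f}|_{S_m}$ or $\overline{g}|_{S_m}$ into $H^\infty|_{S_m}$; combined with the standing hypothesis $f|_{S_m},g|_{S_m}\in H^\infty|_{S_m}$, this forces the relevant symbol to be constant, giving conclusion $(2)$. Otherwise both liminfs are strictly positive, and I would mirror the dimension analysis on $\{[\overline{f}|_{S_m}],[\overline{g}|_{S_m}]\}$ in $(L^\infty|_{S_m})/(H^\infty|_{S_m})$ carried out in Subcases 3(ii)--3(iii) of the proof of Proposition \ref{2c}. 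Combined with the Bolzano--Weierstrass theorem and Lemma \ref{2e}, this produces a single constant $\lambda$ such that
\begin{equation*}
H_{u\overline{f}} k_z = \lambda H_{\overline{f}} k_z + \varepsilon(z),\qquad H_{u\overline{g}} k_z = \lambda H_{\overline{g}} k_z + \varepsilon(z),
\end{equation*}
which is equivalent to $\big((u-\lambda)\overline{f}\big)|_{S_m},\big((u-\lambda)\overline{g}\big)|_{S_m}\in H^\infty|_{S_m}$.

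The only genuinely new step is extracting $\big((u-\lambda)\overline{fg}\big)|_{S_m}\in H^\infty|_{S_m}$. My idea is to test $K$ against the weakly null net $\{Vk_z\}\subset\overline{zH^2}$: compactness of $K$ forces $\|KVk_z\|_2\to 0$. Using the intertwining relations of Remark \ref{rem1} ($T_fV=VS_{\overline{f}}$, $S_gV=VT_{\overline{g}}$, $H_\varphi^*=VH_\varphi V$) together with $H_{u\overline{fg}}=H_{u\overline{f}}T_{\overline{g}}+S_{u\overline{f}}H_{\overline{g}}$, a direct computation collapses $KVk_z$ to
\begin{equation*}
KVk_z = V\bigl(S_{\overline{f}}H_{u\overline{g}} k_z - S_{u\overline{f}}H_{\overline{g}} k_z\bigr),
\end{equation*}
so $\|S_{\overline{f}}H_{u\overline{g}} k_z - S_{u\overline{f}}H_{\overline{g}} k_z\|_2\to 0$. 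Substituting $H_{u\overline{g}}k_z=\lambda H_{\overline{g}}k_z+\varepsilon(z)$ and using $\|S_{\overline{f}}\|\leq\|f\|_\infty$ yields $\|S_{(u-\lambda)\overline{f}}H_{\overline{g}} k_z\|_2\to 0$. Writing
\begin{equation*}
H_{(u-\lambda)\overline{fg}} k_z = H_{(u-\lambda)\overline{f}}T_{\overline{g}} k_z + S_{(u-\lambda)\overline{f}}H_{\overline{g}} k_z,
\end{equation*}
the first term tends to zero by Lemma \ref{3.d} (since $\|H_{(u-\lambda)\overline{f}}k_z\|_2\to 0$) and the second by the previous estimate; Lemma \ref{2e} then delivers $\big((u-\lambda)\overline{fg}\big)|_{S_m}\in H^\infty|_{S_m}$, completing conclusion $(1)$.

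The main obstacle is exactly this extra $\overline{fg}$ condition: the commutator-style rank-one analysis via $K^*T_{\phi_z}-S_{\phi_z}K^*$ algebraically absorbs $H_{u\overline{fg}}$ into the simplification of $K^*$, so that identity alone constrains only $\overline{f},\overline{g},u\overline{f},u\overline{g}$. The resolution is that the asymmetric structure of the semicommutator (contrasting with the symmetric commutator treated in Proposition \ref{2d}) leaves a non-trivial residue when $K$ itself is tested on $Vk_z$; that residue is exactly what controls the $S$-type tail $S_{(u-\lambda)\overline{f}}H_{\overline{g}}k_z$ appearing in the Hankel product decomposition of $H_{(u-\lambda)\overline{fg}}$.
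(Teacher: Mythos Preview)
Your proposal is correct and follows essentially the same route as the paper: simplify $K^*$ via Identity~(4.5), apply Lemmas~\ref{2g}--\ref{2h} to obtain the two-term rank identity, case-split on whether $[\overline{f}|_{S_m}]$ or $[\overline{g}|_{S_m}]$ vanishes, and then extract the single $\lambda$ from the rank-one structure. The only cosmetic difference is that for the $\overline{fg}$ condition you test $K$ on $Vk_z$ (obtaining $S_{\overline{f}}H_{u\overline{g}}k_z-S_{u\overline{f}}H_{\overline{g}}k_z$), whereas the paper tests $K^*$ on $k_z$ (obtaining $H_{u\overline{g}}T_{\overline{f}}k_z-H_{\overline{g}}T_{u\overline{f}}k_z$); both lead to the same decomposition of $H_{(u-\lambda)\overline{fg}}k_z$ after invoking Lemma~\ref{3.d}. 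One small note: your pointer to ``Subcases~3(ii)--3(iii) of Proposition~\ref{2c}'' is not quite apt, since the identity here has only two tensor terms rather than four, and the extraction of the single $\lambda$ is in fact more direct (apply the identity to $VH_{\overline{f}}k_z$, pass to a subnet for $\lambda$, then substitute back and use $\|H_{\overline{g}}k_z\|_2$ bounded below)---exactly as the paper does.
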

\begin{proof}
Let $K$ denote the compact operator given above, then
$$K^*=H_{u\overline{g}}T_{\overline{f}}+S_{\overline{g}}H_{u\overline{f}}-H_{u\overline{fg}}$$
is also compact.
Using Identity (4.5) of \cite{StZ1}, we obtain
$$H_{u\overline{fg}}=H_{\overline{g}}T_{u\overline{f}}+S_{\overline{g}}H_{u\overline{f}},$$
to get
$$K^*=H_{u\overline{g}}T_{\overline{f}}-H_{\overline{g}}T_{u\overline{f}}.$$
By Lemmas \ref{2g} and \ref{2h}, we obtain that
\begin{equation}\label{5.b}
\begin{array}{l}
\lim\limits_{z\rightarrow m}\left\|K^{*}T_{\phi_z}-S_{\phi_z}K^{*}\right\|
=\lim\limits_{z\rightarrow m}\left\|H_{u\overline{g}}k_z\otimes VH_{\overline{f}}k_z-H_{\overline{g}}k_z\otimes VH_{u\overline{f}}k_z\right\|=0.
\end{array}
\end{equation}

Before going further, we need to consider the following two cases.

\textbf{Case 1.} If $\left[\overline{f}|_{S_m}\right]=0$, then $\overline{f}|_{S_m}\in H^{\infty}|_{S_m}$. Since $f|_{S_m}$ is also in $H^{\infty}|_{S_m}$,
we conclude that $f|_{S_m}$ is a constant.

\textbf{Case 2.} If $\left[\overline{f}|_{S_m}\right]\neq 0$, then we have by Lemma \ref{2e} that
$$\varliminf\limits_{z\rightarrow m}\|H_{\overline{f}}k_z\|_2> 0.$$
By (\ref{5.b}), we have
$$\lim\limits_{z\rightarrow m}\left\|H_{u\overline{g}}k_z-\frac{\langle VH_{\overline{f}}k_z,VH_{u\overline{f}}k_z\rangle}{\|VH_{\overline{f}}k_z\|_{2}^{2}}H_{\overline{g}}k_z\right\|_2=0.$$
Since $V$ is anti-unitary,  $\frac{\langle VH_{\overline{f}}k_z,VH_{u\overline{f}}k_z\rangle}{\|VH_{\overline{f}}k_z\|_{2}^{2}}$
is uniformly bounded for all $z\in \O(m)\cap \D$.
Using the Bolzano-Weierstrass theorem again, there is a subnet $\{z_{\alpha}\}\subset \D$ such that
$$\lim\limits_{z_{\alpha}\rightarrow m}\frac{\langle VH_{\overline{f}}k_{z_{\alpha}},VH_{u\overline{f}}k_{z_{\alpha}}\rangle}{\|VH_{\overline{f}}k_{z_{\alpha}}\|_{2}^{2}}=\lambda$$
for some constant $\lambda$, to obtain
$$\lim\limits_{z_{\alpha}\rightarrow m}\|H_{u\overline{g}}k_{z_{\alpha}}-\lambda H_{\overline{g}}k_{z_{\alpha}}\|_2=0.$$
Now Lemma \ref{2e} gives us that
$$\lim\limits_{z\rightarrow m}\|H_{(u-\lambda)\overline{g}}k_{z}\|_2=0,$$
which implies that $\big((u-\lambda)\overline{g}\big)|_{S_m}\in H^{\infty}|_{S_m}$.

Furthermore, since
\begin{align*}
\|H_{\overline{g}}k_z\otimes VH_{(u-\lambda)\overline{f}}k_z\|
&=\|H_{\overline{g}}k_z\otimes VH_{u\overline{f}}k_z-H_{\lambda\overline{g}}k_z\otimes VH_{\overline{f}}k_z+H_{u\overline{g}}k_z\otimes VH_{\overline{f}}k_z-H_{u\overline{g}}k_z\otimes VH_{\overline{f}}k_z\|\\
&=\left\|H_{(u-\lambda)\overline{g}}k_z\otimes VH_{\overline{f}}k_z-\left(H_{u\overline{g}}k_z\otimes VH_{\overline{f}}k_z-H_{\overline{g}}k_z\otimes VH_{u\overline{f}}k_z\right)\right\|\\
&\leqslant  \|H_{(u-\lambda)\overline{g}}k_z\|_2 \cdot \| VH_{\overline{f}}k_z\|_2+\left\|H_{u\overline{g}}k_z\otimes VH_{\overline{f}}k_z-H_{\overline{g}}k_z\otimes VH_{u\overline{f}}k_z\right\|,
\end{align*}
we conclude that
$$\lim\limits_{z\rightarrow m}\|H_{\overline{g}}k_z\otimes VH_{(u-\lambda)\overline{f}}k_z\|=\lim\limits_{z\rightarrow m}\|H_{\overline{g}}k_z\|_2 \cdot  \|H_{(u-\lambda)\overline{f}}k_z\|_2=0.$$
As $u$ is inner and $f,\ g\in L^\infty$, we obtain that
$$\lim\limits_{z\rightarrow m}\|H_{\overline{g}}k_z\|_2=0 \ \ \ \ \ \mathrm{or}\ \ \ \ \ \lim\limits_{z\rightarrow m}\|H_{(u-\lambda)\overline{f}}k_z\|_2=0.$$
It follows from Lemma \ref{2e} that $\overline{g}|_{S_m}$ or $\left((u-\lambda)\overline{f}\right)|_{S_m}$ is in $H^{\infty}|_{S_m}$.

In order to complete the proof of this proposition, we need to consider the following two subcases for  $\left[\overline{g}|_{S_m}\right]$.

\textbf{Subcase 2(i).} If $\overline{g}|_{S_m}\in H^{\infty}|_{S_m}$, then we have by $g|_{S_m}\in H^{\infty}|_{S_m}$ that $g|_{S_m}$ is a constant.

\textbf{Subcase 2(ii).} If  $\overline{g}|_{S_m}$ is not in $H^{\infty}|_{S_m}$, then we have  $\left((u-\lambda)\overline{f}\right)|_{S_m}\in H^{\infty}|_{S_m}$ and
$$\lim\limits_{z\rightarrow m}\|H_{(u-\lambda)\overline{f}}k_z\|_2=0.$$
Since $K^*$ is compact, we have
$$\lim\limits_{z\rightarrow m}\|K^*k_z\|_2=0.$$
Moreover, we have by Lemma \ref{3.d} that
$$\lim\limits_{z\rightarrow m}\|H_{(u-\lambda)\overline{g}}T_{\overline{f}}k_z\|_2=0.$$
Noting that
\begin{align*}
K^*k_z&=H_{u\overline{g}}T_{\overline{f}}k_z-H_{\overline{g}}T_{u\overline{f}}k_z\\
&=H_{(u-\lambda)\overline{g}}T_{\overline{f}}k_z+H_{\lambda\overline{g}}T_{\overline{f}}k_z-H_{\overline{g}}T_{u\overline{f}}k_z\\
&=H_{(u-\lambda)\overline{g}}T_{\overline{f}}k_z-H_{\overline{g}}T_{(u-\lambda)\overline{f}}k_z\\
&=H_{(u-\lambda)\overline{g}}T_{\overline{f}}k_z-H_{(u-\lambda)\overline{fg}}k_z+S_{\overline{g}}H_{(u-\lambda)\overline{f}}k_z,
\end{align*}
we have $\|H_{(u-\lambda)\overline{fg}}k_z\|_2\rightarrow 0$ as $z\rightarrow m$. Thus
$\left((u-\lambda)\overline{fg}\right)|_{S_m}$ is also in $H^{\infty}|_{S_m}$, to complete the proof of Proposition \ref{5d}.
\end{proof}

In view of Proposition \ref{5d}, we obtain the following proposition which gives a necessary condition for the compactness of the operator $H_{uf}T_g+S_{f}H_{ug}-H_{ufg}$.
\begin{prop}\label{5e}
Let $u$ be a nonconstant inner function, $f,\ g\in L^\infty$ and $m\in \M(H^\infty+C)$. Suppose that
$$H_{uf}T_g+S_{f}H_{ug}-H_{ufg}$$
is compact and $\overline{f}|_{S_m},\ \overline{g}|_{S_m}$ are in $H^{\infty}|_{S_m}$.
Then for the support set $S_m$ of $m$, one of the following holds:\\
 $(1)$ $\left((u-\lambda)f\right)|_{S_m}$, $\left((u-\lambda)g\right)|_{S_m}$ and
  $\left((u-\lambda)fg\right)|_{S_m}$ are in $H^\infty|_{S_m}$ for some constant $\lambda$; \\
 $(2)$ either $f|_{S_m}$ or $g|_{S_m}$ is a constant.
\end{prop}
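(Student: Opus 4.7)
The plan is to reduce Proposition \ref{5e} to Proposition \ref{5d} by passing to adjoints and applying a symbol substitution, rather than redoing the whole argument with Lemmas \ref{2g} and \ref{2h}.

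First I would observe that a bounded operator is compact if and only if its adjoint is compact, so the hypothesis is equivalent to the compactness of
\[
(H_{uf}T_g+S_{f}H_{ug}-H_{ufg})^{*}
=T_{\overline{g}}H_{uf}^{*}+H_{ug}^{*}S_{\overline{f}}-H_{ufg}^{*}.
\]
Now I would compare this with the operator appearing in Proposition \ref{5d}: after the substitution $f\leftarrow\overline{g}$, $g\leftarrow\overline{f}$, the operator
\[
T_{f}H_{u\overline{g}}^{*}+H_{u\overline{f}}^{*}S_{g}-H_{u\overline{fg}}^{*}
\]
becomes exactly $T_{\overline{g}}H_{uf}^{*}+H_{ug}^{*}S_{\overline{f}}-H_{ufg}^{*}$. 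Under the same substitution the hypothesis of Proposition \ref{5d}, namely that $f|_{S_m}$ and $g|_{S_m}$ lie in $H^{\infty}|_{S_m}$, translates into $\overline{g}|_{S_m}$ and $\overline{f}|_{S_m}$ lying in $H^{\infty}|_{S_m}$, which is precisely the assumption of Proposition \ref{5e}.

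Therefore Proposition \ref{5d}, applied to the pair $(\overline{g},\overline{f})$ in place of $(f,g)$, yields that either (a) there exists a constant $\lambda$ such that $\big((u-\lambda)\overline{\overline{g}}\big)|_{S_m}$, $\big((u-\lambda)\overline{\overline{f}}\big)|_{S_m}$ and $\big((u-\lambda)\overline{\overline{g}\,\overline{f}}\big)|_{S_m}$ belong to $H^{\infty}|_{S_m}$, or (b) either $\overline{g}|_{S_m}$ or $\overline{f}|_{S_m}$ is constant. In case (a), since $\overline{\overline{g}}=g$, $\overline{\overline{f}}=f$, and $\overline{\overline{g}\,\overline{f}}=fg$, we recover conclusion (1) of the proposition. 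In case (b), taking complex conjugates shows that either $g|_{S_m}$ or $f|_{S_m}$ is constant, which is conclusion (2).

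There is essentially no new obstacle here: the content of the argument has already been established in Proposition \ref{5d}, and the step here is simply the bookkeeping of verifying that the adjoint operator and the hypothesis line up with the conjugate-symbol version of the previous result. The one point to check carefully is the algebraic identity
\[
(H_{uf}T_{g})^{*}=T_{\overline{g}}H_{uf}^{*},\qquad
(S_{f}H_{ug})^{*}=H_{ug}^{*}S_{\overline{f}},\qquad
(H_{ufg})^{*}=H_{ufg}^{*},
\]
so that the adjoint indeed has the required form after relabeling.
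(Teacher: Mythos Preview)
Your reduction is correct and is exactly what the paper intends: it states Proposition \ref{5e} without a separate argument, prefaced only by ``In view of Proposition \ref{5d}\ldots'', which is precisely the adjoint-plus-substitution step you spell out. The computations of the adjoint and of the relabeling $(f,g)\mapsto(\overline g,\overline f)$ are accurate, so there is nothing to add.
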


Combining Propositions \ref{5c}, \ref{5d} and \ref{5e},  now we summarize the necessary condition for the compactness of the semicommutator $[D_f, D_g)$  in the following theorem.
\begin{thm}\label{Suf} Let $u$ be a nonconstant inner function, $f,g\in L^\infty$ and $m\in \M(H^\infty+C)$. Suppose that the semicommutator $[D_f,D_g)$ is compact.  Then for each support set $S_m$ of $m$, one of the following conditions holds: \\
$(1)$ $f|_{S_m}$, $g|_{S_m}$, $((u-\lambda)\overline{f})|_{S_m}$, $\left((u-\lambda)\overline{g}\right)|_{S_m}$ and
$\left((u-\lambda)\overline{fg}\right)|_{S_m}$ are in $H^\infty|_{S_m}$ for some constant $\lambda$; \\
$(2)$ $\overline{f}|_{S_m}$, $\overline{g}|_{S_m}$, $\left((u-\lambda)f\right)|_{S_m}$, $\left((u-\lambda)g\right)|_{S_m}$ and
$\left((u-\lambda)fg\right)|_{S_m}$ are in $H^\infty|_{S_m}$ for some constant $\lambda$; \\
$(3)$ either $f|_{S_m}$ or $g|_{S_m}$ is a constant.
\end{thm}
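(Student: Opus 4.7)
\textbf{Proof proposal for Theorem \ref{Suf}.} The plan is to assemble Theorem \ref{Suf} from the three propositions just established. First I would invoke Lemma \ref{semi-c}: since $[D_f,D_g)$ is compact, the four auxiliary operators
$$T_fT_g+H_{u\overline{f}}^{*}H_{ug}-T_{fg}, \quad T_fH_{u\overline{g}}^{*}+H_{u\overline{f}}^{*}S_g-H_{u\overline{fg}}^{*},$$
$$H_{uf}T_g+S_{f}H_{ug}-H_{ufg}, \quad H_{uf}H_{u\overline{g}}^{*}+S_{f}S_{g}-S_{fg}$$
are all compact. This hands us the hypotheses needed to apply Propositions \ref{5c}, \ref{5d} and \ref{5e} individually.

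Next I would apply Proposition \ref{5c}, which uses the compactness of the first and fourth operators and already produces a three-way dichotomy on each support set $S_m$: either (a) $f|_{S_m},g|_{S_m}\in H^{\infty}|_{S_m}$, or (b) $\overline{f}|_{S_m},\overline{g}|_{S_m}\in H^{\infty}|_{S_m}$, or (c) one of $f|_{S_m}$, $g|_{S_m}$ is constant. In alternative (c), conclusion (3) of Theorem \ref{Suf} is already in hand, so no further work is needed on that branch.

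In alternative (a), I would feed the conclusion $f|_{S_m},g|_{S_m}\in H^{\infty}|_{S_m}$ together with the compactness of the second operator into Proposition \ref{5d}. Its output is precisely that either $\left((u-\lambda)\overline{f}\right)|_{S_m}$, $\left((u-\lambda)\overline{g}\right)|_{S_m}$ and $\left((u-\lambda)\overline{fg}\right)|_{S_m}$ all lie in $H^{\infty}|_{S_m}$ for some constant $\lambda$, which combined with $f|_{S_m},g|_{S_m}\in H^{\infty}|_{S_m}$ gives conclusion (1) of Theorem \ref{Suf}; or else one of $f|_{S_m}, g|_{S_m}$ is a constant, giving conclusion (3). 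Symmetrically, in alternative (b) I would pair $\overline{f}|_{S_m},\overline{g}|_{S_m}\in H^{\infty}|_{S_m}$ with the compactness of the third operator and invoke Proposition \ref{5e} to obtain conclusion (2) or again conclusion (3).

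Since the three propositions do all the analytic work and the present theorem merely collates them, I do not expect a substantive obstacle here; the only point to be careful about is the bookkeeping of the dichotomies, namely making sure that the constant-function conclusion in Propositions \ref{5d} and \ref{5e} is folded into case (3) rather than being stated as a separate possibility, and that the constant $\lambda$ in conditions (1) and (2) is allowed to depend on the support set $S_m$. With those points noted, the argument reduces to running through the three branches (a), (b), (c) of Proposition \ref{5c} and applying the appropriate refinement in each case.
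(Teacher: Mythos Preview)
Your proposal is correct and matches the paper's approach exactly: the paper states Theorem \ref{Suf} as the result of ``combining Propositions \ref{5c}, \ref{5d} and \ref{5e}'' without further argument, and your branch-by-branch bookkeeping is precisely how that combination is carried out.
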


\section{The sufficient part of  Theorem \ref{MR2}}
In the final section, we will present the proof of the sufficient part of Theorem \ref{MR2}. To do this, we need the following lemma analogous to Lemma \ref{3a}.
\begin{lem}\label{6a}
Let $f,\ g$ be in $ L^\infty$ and
$$L_z=H_{u\overline{g}}k_z\otimes VH_{\overline{f}}k_z-H_{\overline{g}}k_z\otimes VH_{u\overline{f}}k_z,$$
where $z\in \mathbb D$. For each support set $S$, suppose that $f$ and $g$ satisfy one of following conditions:\\
 $(1)$ $f|_S$, $g|_S$, $\left((u-\lambda)\overline{f}\right)|_{S}$, $\left((u-\lambda)\overline{g}\right)|_{S}$ and
 $\left((u-\lambda)\overline{fg}\right)|_{S}$ are in $H^\infty|_{S}$ for some constant $\lambda$; \\
 $(2)$ $\overline{f}|_S$, $\overline{g}|_S$, $\left((u-\lambda)f\right)|_{S}$, $\left((u-\lambda)g\right)|_{S}$ and
 $\left((u-\lambda)fg\right)|_{S}$ are in $H^\infty|_{S}$ for some constant  $\lambda$; \\
 $(3)$ either $f|_S$ or $g|_S$ is constant.\\
Then we have
\begin{align}\label{6.a}
\lim\limits_{|z|\rightarrow 1^-}\|L_z\|=0.
\end{align}
\end{lem}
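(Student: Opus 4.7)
The plan is to reduce, via the Carleson-Corona theorem (as in the remark following Lemma \ref{2h}), to showing $\lim_{z\to m}\|L_z\| = 0$ for every $m \in \mathcal M(H^\infty+C)$, and then to treat the three conditions separately, exploiting only the basic bound $\|x\otimes y\| = \|x\|_2\|y\|_2$, the anti-unitarity of $V$ (which gives $\|Vh\|_2=\|h\|_2$), and Lemma \ref{2e}.

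Condition (2) is immediate: from $\overline f|_{S_m},\ \overline g|_{S_m}\in H^\infty|_{S_m}$, Lemma \ref{2e} gives $\|H_{\overline f}k_z\|_2\to 0$ and $\|H_{\overline g}k_z\|_2\to 0$. Each of the two rank-one summands in $L_z$ carries either a factor of $H_{\overline f}k_z$ or a factor of $H_{\overline g}k_z$ (through $V$), while the other factor is uniformly bounded by $\|f\|_\infty$ or $\|g\|_\infty$. Condition (3) is similar: if say $f|_{S_m}$ is a constant, then both $\overline f|_{S_m}$ and $(u\overline f)|_{S_m}$ lie in $H^\infty|_{S_m}$, so by Lemma \ref{2e} both $\|H_{\overline f}k_z\|_2$ and $\|H_{u\overline f}k_z\|_2$ tend to $0$, and again every term of $L_z$ carries such a factor; the case $g|_{S_m}$ constant is symmetric.

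The main work is Condition (1), and the key is an algebraic cancellation. Writing
\begin{equation*}
H_{u\overline g}k_z = H_{(u-\lambda)\overline g}k_z + \lambda H_{\overline g}k_z, \qquad VH_{u\overline f}k_z = VH_{(u-\lambda)\overline f}k_z + \overline{\lambda}\,VH_{\overline f}k_z,
\end{equation*}
(the conjugate in the second identity coming from the anti-linearity of $V$), and using the tensor identity $(\lambda x)\otimes y = x\otimes(\overline\lambda y)$, the two ``middle'' terms both equal $\lambda\bigl(H_{\overline g}k_z\otimes VH_{\overline f}k_z\bigr)$ and cancel, leaving
\begin{equation*}
L_z = H_{(u-\lambda)\overline g}k_z\otimes VH_{\overline f}k_z - H_{\overline g}k_z\otimes VH_{(u-\lambda)\overline f}k_z.
\end{equation*}
Then $\|L_z\|\leq \|H_{(u-\lambda)\overline g}k_z\|_2\|f\|_\infty + \|g\|_\infty\|H_{(u-\lambda)\overline f}k_z\|_2$, and since $\bigl((u-\lambda)\overline f\bigr)|_{S_m}$ and $\bigl((u-\lambda)\overline g\bigr)|_{S_m}$ both lie in $H^\infty|_{S_m}$, Lemma \ref{2e} forces $\|L_z\|\to 0$.

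The only subtle point I anticipate is bookkeeping the anti-linearity of $V$ carefully so that the $\lambda$-terms really do cancel rather than accumulate; I note in passing that the additional hypothesis $\bigl((u-\lambda)\overline{fg}\bigr)|_{S_m}\in H^\infty|_{S_m}$ in Condition (1) is not needed for Lemma \ref{6a} itself — it will be invoked elsewhere when assembling the full semicommutator.
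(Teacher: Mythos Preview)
Your proof is correct and follows essentially the same route as the paper: the same Carleson--Corona reduction, the same direct estimate under Conditions (2) and (3), and the same algebraic cancellation under Condition (1) yielding $L_z = H_{(u-\lambda)\overline g}k_z\otimes VH_{\overline f}k_z - H_{\overline g}k_z\otimes VH_{(u-\lambda)\overline f}k_z$. Your explicit bookkeeping of the anti-linearity of $V$ versus the conjugate-linearity of the second tensor slot is a welcome clarification (the paper passes over this silently), and your observation that the hypothesis $\bigl((u-\lambda)\overline{fg}\bigr)|_S\in H^\infty|_S$ is unused in this lemma is also correct.
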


\begin{proof}
For any $m$ in $\M(H^\infty+C)$, let $S_m$ be the corresponding support set. If Condition $(2)$ or $(3)$ holds, we have by Lemma \ref{2e} that
$$\lim\limits_{z\rightarrow m}\|H_{\overline{f}}k_z\|_2=\lim\limits_{z\rightarrow m}\|H_{u\overline{f}}k_z\|_2=0$$
or
$$\lim\limits_{z\rightarrow m}\|H_{\overline{g}}k_z\|_2=\lim\limits_{z\rightarrow m}\|H_{u\overline{g}}k_z\|_2=0.$$
It follows that $\lim\limits_{|z|\rightarrow m}\|L_z\|=0$.

To finish this proof, we need to show that Condition $(1)$ can imply (\ref{6.a}). By Lemma \ref{2e}, we have
$$\lim\limits_{z\rightarrow m}\|H_{f}k_z\|_2=\lim\limits_{z\rightarrow m}\|H_{g}k_z\|_2=0,$$
$$\lim\limits_{z\rightarrow m}\|H_{(u-\lambda)\overline{f}}k_z\|_2=\lim\limits_{z\rightarrow m}\|H_{(u-\lambda)\overline{g}}k_z\|_2=0$$
and
$$\lim\limits_{z\rightarrow m}\|H_{(u-\lambda)\overline{fg}}k_z\|_2=0.$$
Since
\begin{align*}
\|L_z\|&=\|H_{u\overline{g}}k_z\otimes VH_{\overline{f}}k_z-H_{\overline{g}}k_z\otimes VH_{u\overline{f}}k_z\|\\
   &=\|H_{(u-\lambda)\overline{g}}k_z\otimes VH_{\overline{f}}k_z-H_{\overline{g}}k_z\otimes VH_{(u-\lambda)\overline{f}}k_z\|\\
   &\leqslant\|H_{(u-\lambda)\overline{g}}k_z\otimes VH_{\overline{f}}k_z\|+\|H_{\overline{g}}k_z\otimes VH_{(u-\lambda)\overline{f}}k_z\|\\
   &=\|H_{(u-\lambda)\overline{g}}k_z\|_2 \cdot \|H_{\overline{f}}k_z\|_2+\|H_{\overline{g}}k_z\|_2 \cdot \|H_{(u-\lambda)\overline{f}}k_z\|_2,
\end{align*}
we obtain $\|L_z\|\rightarrow 0$ as $z\rightarrow m$. This completes the proof.
\end{proof}

We are now in  position to prove the sufficiency for Theorem \ref{MR2}.
\begin{proof}[\emph{\textbf{\emph{Proof of the sufficient part of Theorem \ref{MR2}.}}}]
For any $m\in \M(H^\infty+C)$, let $S_m$ be the support set of $m$. Suppose that one of Conditions $(1)$, $(2)$ and $(3)$  in Theorem \ref{MR2} holds.
According to Lemma \ref{semi-c}, we need to show that
$$\widetilde{K_1}=T_fT_g+H_{u\overline{f}}^{*}H_{ug}-T_{fg},$$
$$\widetilde{K_2}=T_fH_{u\overline{g}}^{*}+H_{u\overline{f}}^{*}S_g-H_{u\overline{fg}}^{*},$$
$$\widetilde{K_3}=H_{uf}T_g+S_{f}H_{ug}-H_{ufg}$$
and
$$\widetilde{K_4}=H_{uf}H_{u\overline{g}}^{*}+S_{f}S_{g}-S_{fg}$$
are compact operators.

As  $T_{fg}-T_fT_g=H_{\overline{f}}^*H_g,$ we get
$$\widetilde{K_1}=H_{u\overline{f}}^{*}H_{ug}-H_{\overline{f}}^*H_g.$$
By Lemma \ref{2a}, we have
\begin{equation}\label{6.b}
\begin{array}{l}
\widetilde{K_1}-T_{\phi_z}^*\widetilde{K_1}T_{\phi_z}=V\left[H_{u\overline{f}}k_z\otimes H_{ug}k_z-H_{\overline{f}}k_z\otimes H_gk_z\right]V^*.
\end{array}
\end{equation}
Next we will show that each condition in  Theorem \ref{MR2} can imply that
\begin{align}\label{compact2}
\lim\limits_{z\rightarrow m}\|\widetilde{K_1}-T_{\phi_z}^*\widetilde{K_1}T_{\phi_z}\|=0.
\end{align}

If Condition $(3)$ holds, then we have by Lemma \ref{2e} that
$$\lim\limits_{z\rightarrow m}\|H_{f}k_z\|_2=\lim\limits_{z\rightarrow m}\|H_{g}k_z\|_2=0$$
and
$$\lim\limits_{z\rightarrow m}\|H_{uf}k_z\|_2=\lim\limits_{z\rightarrow m}\|H_{ug}k_z\|_2=0.$$
Observing that
$$\|H_{u\overline{f}}k_z\otimes H_{ug}k_z-H_{\overline{f}}k_z\otimes H_gk_z\|\leqslant\|H_{u\overline{f}}k_z\|_2\cdot \|H_{ug}k_z\|_2+\|H_{\overline{f}}k_z\|_2\cdot \|H_gk_z\|_2,$$
we obtain $$\lim\limits_{z\rightarrow m}\|\widetilde{K_1}-T_{\phi_z}^*\widetilde{K_1}T_{\phi_z}\|=0.$$

Assume that Condition $(1)$ holds. From the proof of the sufficient part of Theorem \ref{MR}, we get that
$$\lim\limits_{z\rightarrow m}\|H_{f}k_z\|_2=\lim\limits_{z\rightarrow m}\|H_{g}k_z\|_2=0,$$
$$\lim\limits_{z\rightarrow m}\|H_{uf}k_z\|_2=\lim\limits_{z\rightarrow m}\|H_{ug}k_z\|_2=0,$$
$$\lim\limits_{z\rightarrow m}\|H_{(u-\lambda)\overline{f}}k_z\|_2=\lim\limits_{z\rightarrow m}\|H_{(u-\lambda)\overline{g}}k_z\|_2=0$$
and
$$\lim\limits_{z\rightarrow m}\|H_{(u-\lambda)\overline{fg}}k_z\|_2=0.$$
Since
$$\|H_{u\overline{f}}k_z\otimes H_{ug}k_z-H_{\overline{f}}k_z\otimes H_gk_z\|\leqslant \|H_{u\overline{f}}k_z\|_2\cdot \|H_{ug}k_z\|_2+\|H_{\overline{f}}k_z\|_2\cdot \|H_gk_z\|_2,$$
 we conclude that $$\lim\limits_{z\rightarrow m}\|\widetilde{K_1}-T_{\phi_z}^*\widetilde{K_1}T_{\phi_z}\|=0.$$

Using the same techniques as above, we can show that Condition $(2)$  implies
$$\lim\limits_{z\rightarrow m}\|\widetilde{K_1}-T_{\phi_z}^*\widetilde{K_1}T_{\phi_z}\|=0.$$
Therefore, each condition of  Theorem \ref{MR2} implies  that
$$\lim\limits_{|z|\rightarrow 1^-}\|\widetilde{K_1}-T_{\phi_z}^*\widetilde{K_1}T_{\phi_z}\|=0.$$

On the other hand, noting
$$H_{u\overline{f}}^{*}H_{ug}=T_{fg}-T_{\overline{u}f}T_{ug},$$
it follows that
$$\widetilde{K_1}=T_fT_g+H_{u\overline{f}}^{*}H_{ug}-T_{fg}=\left(T_{fg}-T_{\overline{u}f}T_{ug}\right)-\left(T_{fg}-T_fT_g\right),$$
which is a finite sum of finite products of Toeplitz operators. Using the same method as in the proof of the sufficient part of Theorem \ref{MR}, we conclude by (\ref{compact2}) that $\widetilde{K_1}$ is compact.

Using
$$VT_{\varphi}=S_{\overline{\varphi}}V,\ \ \ VH_{\varphi}=H_{\varphi}^{*}V\ \ \ \mathrm{and} \ \ \ V^2=I$$
again,  we have
\begin{align*}\label{4.c}
V\widetilde{K_4}V&=V\left(H_{uf}H_{u\overline{g}}^{*}+S_{f}S_{g}-S_{fg}\right)V\\
     &=H_{uf}^*V^2H_{u\overline{g}}+T_{\overline{f}}V^2T_{\overline{g}}-T_{\overline{fg}}V^2\\
     &=H_{uf}^*H_{u\overline{g}}+T_{\overline{f}}T_{\overline{g}}-T_{\overline{fg}}.
\end{align*}
Using the same arguments as above, we conclude that
$$H_{uf}^*H_{u\overline{g}}+T_{\overline{f}}T_{\overline{g}}-T_{\overline{fg}}$$
is compact, which gives us that $\widetilde{K_4}$ is also compact.

To show the compactness of $\widetilde{K_2}$, we will show that $\widetilde{K_2}\widetilde{K_2}^*$ is compact as before. Recall that
$$\widetilde{K_2}=T_fH_{u\overline{g}}^{*}+H_{u\overline{f}}^{*}S_g-H_{u\overline{fg}}^{*}.$$
Using Identity (4.5) in \cite{StZ1} again, we have
$$H_{u\overline{fg}}=S_{\overline{g}}H_{u\overline{f}}+H_{\overline{g}}T_{u\overline{f}}.$$
Thus we get
$$\widetilde{K_2}^*=H_{u\overline{g}}T_{\overline{f}}-H_{\overline{g}}T_{u\overline{f}}$$
and
$$\widetilde{K_2}\widetilde{K_2}^*=(T_fH_{u\overline{g}}^{*}-T_{\overline{u}f}H_{\overline{g}}^*)
(H_{u\overline{g}}T_{\overline{f}}-H_{\overline{g}}T_{u\overline{f}}).$$

Note that $\widetilde{K_2}\widetilde{K_2}^*$ is a finite sum of finite products of Toeplitz operators and the symbol map maps this operator to zero.
Applying  \cite[Lemma 12]{GkZ2} and \cite[Corollary 6]{BH} again, it suffices to show that
$$\lim\limits_{z\rightarrow m}\big\|\widetilde{K_2}\widetilde{K_2}^*-T_{\phi_z}^*\widetilde{K_2}\widetilde{K_2}^*T_{\phi_z}\big\|=0.$$
By Lemma \ref{2g}, we have
\begin{eqnarray*}
\widetilde{K_2}^*T_{\phi_z}=S_{\phi_z}\widetilde{K_2}^*-L_z,
\end{eqnarray*}
where $L_z$ is defined in Lemma \ref{6a}.
Thus we have
\begin{align*}
T_{\phi_z}^*\widetilde{K_2}\widetilde{K_2}^*T_{\phi_z} &=\left(\widetilde{K_2}^*T_{\phi_z}\right)^*\widetilde{K_2}^*T_{\phi_z}\\
                               &=\left(S_{\phi_z}\widetilde{K_2}^*-L_z\right)^*(S_{\phi_z}\widetilde{K_2}^*-L_z)\\
                               &= \left(\widetilde{K_2}S_{\phi_z}^*-L_z^*\right)(S_{\phi_z}\widetilde{K_2}^*-L_z)\\
                               &= \widetilde{K_2}S_{\phi_z}^*S_{\phi_z}\widetilde{K_2}^*-\widetilde{K_2}S_{\phi_z}^*L_z-L_z^*S_{\phi_z}\widetilde{K_2}^*+L_z^*L_z\\
                               &= \widetilde{K_2}(I-Vk_z\otimes Vk_z)\widetilde{K_2}^*-\widetilde{K_2}S_{\phi_z}^*L_z-L_z^*S_{\phi_z}\widetilde{K_2}^*+L_z^*L_z\\
                               &= \widetilde{K_2}\widetilde{K_2}^*-\widetilde{K_2}Vk_z\otimes \widetilde{K_2}Vk_z-\widetilde{K_2}S_{\phi_z}^*L_z-L_z^*S_{\phi_z}\widetilde{K_2}^*+L_z^*L_z.
\end{align*}
Lemma \ref{6a} gives us that $\|\widetilde{K_2}S_{\phi_z}^*L_z\|$, $\|L_z^*S_{\phi_z}\widetilde{K_2}^*\|$ and $\|L_z^*L_z\|$ all converge to $0$  as $z\rightarrow m$. Thus, we need to show that $\|\widetilde{K_2}Vk_z\|_2\rightarrow 0$ as $z\rightarrow m$. In fact,
\begin{align*}
\widetilde{K_2}Vk_z &=(H_{u\overline{g}}T_{\overline{f}}-H_{\overline{g}}T_{u\overline{f}})^*Vk_z\\
        &= T_{f}H_{u\overline{g}}^*Vk_z-T_{\overline{u}f}H_{\overline{g}}^*Vk_z\\
        &= V\left(S_{\overline{f}}H_{u\overline{g}}k_z-S_{u\overline{f}}H_{\overline{g}}k_z\right)\\
        &= V\Big\{(I-P)\left[\overline{f}(I-P)(u\overline{g}k_z)\right]-(I-P)\left[u\overline{f}(I-P)(\overline{g}k_z)\right]\Big\}\\
        &= V(I-P)\left[u\overline{f}P(\overline{g}k_z)-\overline{f}P(u\overline{g}k_z)\right]\\
        &= VH_{u\overline{f}}T_{\overline{g}}k_z-VH_{\overline{f}}T_{u\overline{g}}k_z,
\end{align*}
where the third equality follows from that
$$VT_{\varphi}=S_{\overline{\varphi}}V,\ \ \ VH_{\varphi}=H_{\varphi}^{*}V\ \ \ \mathrm{and} \ \ \ V^2=I.$$

If Condition $(2)$ of Theorem \ref{MR2} holds, then we have
$$\lim\limits_{z\rightarrow m}\|H_{\overline{f}}k_z\|_2=\lim\limits_{z\rightarrow m}\|H_{u\overline{f}}k_z\|_2=0.$$
It follows from Lemma \ref{3.d} that
$$\lim\limits_{z\rightarrow m}\|\widetilde{K_2}Vk_z\|_2=\lim\limits_{z\rightarrow m}\|H_{u\overline{f}}T_{\overline{g}}k_z-H_{\overline{f}}T_{u\overline{g}}k_z\|_2=0.$$

If Condition $(3)$ holds, then  $\overline{f}|_{S_m}$ or $\overline{g}|_{S_m}$ is also a constant. This   yields
$$\lim\limits_{z\rightarrow m}\|H_{\overline{f}}k_z\|_2=\lim\limits_{z\rightarrow m}\|H_{u\overline{f}}k_z\|_2=0$$
or
$$\lim\limits_{z\rightarrow m}\|H_{\overline{g}}k_z\|_2=\lim\limits_{z\rightarrow m}\|H_{u\overline{g}}k_z\|_2=0.$$
By Lemma \ref{3.d} again, we have
$$\lim\limits_{z\rightarrow m}\|\widetilde{K_2}Vk_z\|_2=\lim\limits_{z\rightarrow m}\|H_{u\overline{f}}T_{\overline{g}}k_z-H_{\overline{f}}T_{u\overline{g}}k_z\|_2=0$$
 or
$$\lim\limits_{z\rightarrow m}\|\widetilde{K_2}Vk_z\|_2=\lim\limits_{z\rightarrow m}\|S_{\overline{f}}H_{u\overline{g}}k_z-S_{u\overline{f}}H_{\overline{g}}k_z\|_2=0.$$

Finally,  we  assume that Condition $(1)$ holds. From Lemma \ref{2e}, we get
$$\lim\limits_{z\rightarrow m}\|H_{(u-\lambda)\overline{f}}k_z\|_2=\lim\limits_{z\rightarrow m}\|H_{(u-\lambda)\overline{g}}k_z\|_2=\lim\limits_{z\rightarrow m}\|H_{(u-\lambda)\overline{fg}}k_z\|_2=0.$$
Noting that
\begin{align*}
\|\widetilde{K_2}Vk_z\|_2&=\|H_{u\overline{f}}T_{\overline{g}}k_z-H_{\overline{f}}T_{u\overline{g}}k_z\|_2\\
              &=\|H_{(u-\lambda)\overline{f}}T_{\overline{g}}k_z-H_{\overline{f}}T_{(u-\lambda)\overline{g}}k_z\|_2\\
              &= \|H_{(u-\lambda)\overline{f}}T_{\overline{g}}k_z-[H_{(u-\lambda)\overline{fg}}-S_{\overline{f}}H_{(u-\lambda)\overline{g}}]k_z\|_2\\
              &=\|H_{(u-\lambda)\overline{f}}T_{\overline{g}}k_z-H_{(u-\lambda)\overline{fg}}k_z+S_{\overline{f}}H_{(u-\lambda)\overline{g}}k_z\|_2\\
              &\leqslant \|H_{(u-\lambda)\overline{f}}T_{\overline{g}}k_z\|_2+\|H_{(u-\lambda)\overline{fg}}k_z\|_2+\|S_{\overline{f}}H_{(u-\lambda)\overline{g}}k_z\|_2\\
              &\leqslant\|H_{(u-\lambda)\overline{f}}T_{\overline{g}}k_z\|_2+\|H_{(u-\lambda)\overline{fg}}k_z\|_2+\|f\|_{\infty}\cdot\|H_{(u-\lambda)\overline{g}}k_z\|_2,
\end{align*}
we conclude by Lemma \ref{3.d} that $\lim\limits_{z\rightarrow m}\|\widetilde{K_2}Vk_z\|_2=0$. Moreover, since
\begin{align*}
\|\widetilde{K_2}\widetilde{K_2}^*-T_{\phi_z}^*\widetilde{K_2}\widetilde{K_2}^*T_{\phi_z}\|&=\|\widetilde{K_2}Vk_z\otimes \widetilde{K_2}Vk_z+\widetilde{K_2}S_{\phi_z}^*L_z+L_z^*S_{\phi_z}\widetilde{K_2}^*-L_z^*L_z\|\\
                                         &\leqslant \|\widetilde{K_2}Vk_z\otimes \widetilde{K_2}Vk_z\|+\|\widetilde{K_2}S_{\phi_z}^*L_z\|+\|L_z^*S_{\phi_z}\widetilde{K_2}^*\|+\|L_z^*L_z\|\\
                                         &=\|\widetilde{K_2}Vk_z\|_2^2+\|\widetilde{K_2}S_{\phi_z}^*L_z\|+\|L_z^*S_{\phi_z}\widetilde{K_2}^*\|+\|L_z\|^2,
\end{align*}
we have
$$\lim_{z\rightarrow m}\|\widetilde{K_2}\widetilde{K_2}^*-T_{\phi_z}\widetilde{K_2}\widetilde{K_2}^*T_{\phi_z}\|=0.$$
Using the same idea as in the proof of the compactness of $\widetilde{K_1}$, we conclude that $\widetilde{K_2}\widetilde{K_2}^*$ is compact, so $\widetilde{K_2}$ is also compact.

In order to finish  the proof, we observe that
\begin{align*}
V\widetilde{K_3}V&=VH_{uf}T_gV+VS_{f}H_{ug}V-VH_{ufg}V\\
      &=H_{uf}^*S_{\overline{g}}+T_{\overline{f}}H_{ug}^*-H_{ufg}^*.
\end{align*}
Similarly we can show that $\widetilde{K_3}$ is compact, to complete the proof of Theorem \ref{MR2}.
\end{proof}\vspace{3.6mm}
\subsection*{Acknowledgment}
This work was partially supported by  NSFC (grant numbers: 11531003,  11701052). The second author was partially supported by the Fundamental Research Funds for the Central Universities (grant numbers: 2020CDJQY-A039, 2020CDJ-LHSS-003).

\end{document}